\newcommand{\R}{\mathbb{R}}
\newcommand{\bigo}{\mathcal{O}}
\newcommand{\sign}{{\rm sign}}
\DeclarePairedDelimiterX{\norm}[1]{\lVert}{\rVert}{#1}
\DeclarePairedDelimiterX{\abs}[1]{\lvert}{\rvert}{#1}
\DeclarePairedDelimiterX{\0norm}[1]{\lVert}{\rVert_{0}}{#1}
\DeclarePairedDelimiterX{\1norm}[1]{\lVert}{\rVert_{1}}{#1}
\DeclarePairedDelimiterX{\2norm}[1]{\lVert}{\rVert_{2}}{#1}
\DeclarePairedDelimiterX{\nnorm}[1]{\lVert}{\rVert_{n}}{#1}
\DeclarePairedDelimiterX{\2nnorm}[1]{\lVert}{\rVert_{n}^2}{#1}
\DeclareMathOperator*{\argmin}{arg\,min}
\begin{document}
\Sconcordance{concordance:TV2D-JMLR-reviewed.tex:TV2D-JMLR-reviewed.Rnw:%
1 87 1 1 0 32 1}
\Sconcordance{concordance:TV2D-JMLR-reviewed.tex:./TV2D-JMLR-Section1.Rnw:ofs 121:%
1 72 1}
\Sconcordance{concordance:TV2D-JMLR-reviewed.tex:./TV2D-JMLR-Section2.Rnw:ofs 194:%
1 72 1}
\Sconcordance{concordance:TV2D-JMLR-reviewed.tex:./TV2D-JMLR-Section3.Rnw:ofs 267:%
1 56 1 1 20 1 2 56 1}
\Sconcordance{concordance:TV2D-JMLR-reviewed.tex:./TV2D-JMLR-Section4.Rnw:ofs 382:%
1 57 1}
\Sconcordance{concordance:TV2D-JMLR-reviewed.tex:./TV2D-JMLR-Section5.Rnw:ofs 440:%
1 89 1}
\Sconcordance{concordance:TV2D-JMLR-reviewed.tex:./TV2D-JMLR-Section6.Rnw:ofs 530:%
1 172 1}
\Sconcordance{concordance:TV2D-JMLR-reviewed.tex:./TV2D-JMLR-Section7.Rnw:ofs 703:%
1 440 1}
\Sconcordance{concordance:TV2D-JMLR-reviewed.tex:./TV2D-JMLR-Section8.Rnw:ofs 1144:%
1 160 1}
\Sconcordance{concordance:TV2D-JMLR-reviewed.tex:./TV2D-JMLR-Section9.Rnw:ofs 1305:%
1 8 1}
\Sconcordance{concordance:TV2D-JMLR-reviewed.tex:TV2D-JMLR-reviewed.Rnw:ofs 1314:%
130 13 1}
\Sconcordance{concordance:TV2D-JMLR-reviewed.tex:./TV2D-JMLR-Section10.Rnw:ofs 1328:%
1 161 1}
\Sconcordance{concordance:TV2D-JMLR-reviewed.tex:TV2D-JMLR-reviewed.Rnw:ofs 1490:%
145 15 1}

\title{Adaptive Rates for Total Variation Image Denoising}

\author{\name Francesco Ortelli \email fortelli@ethz.ch \\
       \addr Seminar f\"{u}r Statistik, ETH Z\"{u}rich \\
       R\"{a}mistrasse 101\\
       8092 Z\"{u}rich, Schweiz
       \AND
       \name Sara van de Geer \email geer@ethz.ch \\
      \addr Seminar f\"{u}r Statistik, ETH Z\"{u}rich \\
       R\"{a}mistrasse 101\\
       8092 Z\"{u}rich, Schweiz}

\editor{Arnak Dalalyan}

\maketitle

\begin{abstract}
We study the theoretical properties of image denoising via total variation penalized least-squares. We define the total vatiation in terms of the two-dimensional total discrete derivative of the image and show that it gives rise to denoised images which are piecewise constant on rectangular sets.

 We prove that, if the true image is piecewise constant on just a few rectangular sets, the denoised image converges to the true image at a parametric rate, up to a log factor.
More generally, we  show that the denoised image enjoys oracle properties, that is, it is almost as good as if some aspects of the true image were known.

In other words, image denoising with total variation regularization leads to an adaptive reconstruction of the true image.
\end{abstract}

\begin{keywords}
  Total variation, Image denoising, Fused Lasso, Oracle inequalities
\end{keywords}

\section{Introduction}\label{TV2D-JMLR-Section1}

Image denoising is a broad and active field of research (\cite{dabo07,elad10,aria12,zhan18,goya20}), where the aim is to reconstruct an image corrupted with noise. Generally, some assumptions on the structure of the underlying image have to be made to favor denoised images showing such structure (\cite{mamm95,polz03}). One of these assumptions is that the image to reconstruct is constant on few sets belonging to some specific class, as for instance the class of connected sets or the class of rectangular connected sets. Image denoising with total variation regularization is known to promote such piecewise-constant denoised images (\cite{bach11b}).

The use of total variation penalties for image denoising dates back to \cite{rudi92} and has been the subject of various studies (\cite{mamm97-2,cham97,case15,cham17}). For an overview over some theoretical and practical aspects, see \cite{cham10}. The theoretical study of total variation for image denoising has recently experienced a surge of interest (\cite{sadh16,wang16,hutt16,padi17,chat19,fang19}).

\subsection{Review of the literature}

Consider a continuous image $\phi(x,y), (x,y) \in [0,1]^2$ and a discrete or discretized image $f(j,k), (j,k) \in \{1, \ldots, n_1\}\times \{1, \ldots, n_2\}$. In the literature we encounter different definitions of (two-dimensional) total variation. Three of them are listed in what follows.

\begin{itemize}
\item In the seminal work by \cite{rudi92}, total variation is defined in terms of partial derivatives as
$$ \int_0^1 \int_0^1 \sqrt{\left(\frac{\partial}{\partial x} \phi(x,y)\right)^2 + \left(\frac{\partial}{\partial y}\phi(x,y)\right)^2} dx \ dy .$$
Different discretization procedures have been proposed for the total  variation by \cite{rudi92}: isotropic, anisotropic, upwind (\cite{cham11}) and Shannon (\cite{aber17}) total vatiation.
For more details and a recently proposed discretization we refer to \cite{cond17}. 

\item Total variation in  terms of partial derivatives can also be defined as
$$ \int_0^1 \int_0^1 \left|\frac{\partial}{\partial x} \phi(x,y)\right| + \left|\frac{\partial}{\partial y}\phi(x,y)\right| dx \ dy .$$
Its discrete version
$$ \sum_{j=2}^{n_1} \sum_{k=1}^{n_2} \abs{f(j,k)-f(j-1,k)} + \sum_{j=1}^{n_1} \sum_{k=2}^{n_2} \abs{f(j,k)-f(j,k-1)}$$
is considered in \cite{sadh16,wang16,hutt16,chat19} and  corresponds, up to normalization, to summing up the edge differences of the discrete image across a two-dimensional grid graph. Used as a penalty for least squares, this definition results in denoised images which are piecewise constant on connected sets of any shape (\cite{bach11b}).

\item Alternatively, total variation can be defined in terms of the total derivative as
$$ \int_0^1 \int_0^1 \left|\frac{\partial}{\partial x} \frac{\partial}{\partial y}\phi(x,y)\right| dx \ dy $$
and in discretized form as
$$\sum_{j=2}^{n_1} \sum_{k=2}^{n_2} \abs{f(j,k)-f(j-1,k)-f(j,k-1)+f(j-1,k-1)}. $$
This approach is adopted by \cite{mamm97-2}, \cite{fang19} and will also be adopted in this paper. As a penalty for least squares, this definition will be shown to render denoised images which are piecewise constant on rectangular sets.
\end{itemize}

In the literature, the second definition of total variation in terms of partial derivatives is more popular than the one in terms of total derivatives. Least-squares estimators with a penalty on the discrete partial derivatives of the image are the subject of a vast statistical literature. Let $n$ denote the number of pixels of the image. \cite{sadh16} derive minimax rates, which, for large $n$ and under the canonical scaling are of order $\sqrt{\log({n})/n}$. Later, \cite{sadh17b} extend the minimax results to higher order differences and to higher dimensions. \cite{hutt16} prove sharp oracle inequalities with the rate $\log n/ \sqrt{n}$. Lastly, the very recent work by \cite{chat19} focuses  on the constrained  optimization problem (solvable e.g. by \cite{fadi11}) and a tuning-free version thereof. For a certain underlying image a rate faster than the minimax rate is obtained. The approach by \cite{chat19}, as the one used for higher-order total variation regularization (\cite{gunt20}), is based on bounding Gaussian widths of tangent cones.

On the other side, \cite{mamm97-2} define total variation in terms of total derivative, as in this paper, and obtain the rate $n^{-3/5}$ for the estimation of the ``interaction terms''. The same definition of total variation is used by \cite{fang19}, who study a constrained version of the estimator.


\subsection{Contributions}

We prove upper bounds on the mean squared error for image denoising with a total variation penalty promoting piecewise constant estimates on rectangular sets. These upper bounds are presented in the form of oracle inequalities (cf. \cite{kolt06, loun11, dala12a, stuc17, bell17, bell16, bell18, else19}). Oracle inequalities are finite-sample theoretical guarantees on the performance of an estimator, which treat in a unified way both the cases of  well-specified and misspecified models. In particular, we show that the mean squared error of the denoised image is upper bounded by the optimal tradeoff between ``approximation error'' and ``estimation error''. This optimal tradeoff depends on the true underlying image, which is unknown. Hence the term ``oracle'':  the estimator is shown to perform as well as if it knew the aspects of the true image necessary to reach this optimal tradeoff.

We derive oracle inequalities with both fast and slow rates.
\begin{itemize}
\item In the case of fast rates, the estimation error is shown to be of order ${s^*}^{3/2} \log^2 (n) /n$  for oracle images being constant on $s^*$  rectangular sets of roughly the same size. The parametric rate is reached up to the log term and a factor ${s^*}^{1/2}$ due to the two-dimensionality of the problem. The general result with fast rates is Theorem \ref{main.theorem}, while a special case is exposed in Theorem \ref{thm4}. Theorem \ref{main.theorem} is an adaptive result: the bound on the mean squared error of the denoised image depends on the structure in the underlying image.  This dependence is mediated by a so-called oracle, which trades off the fidelity to the underlying image and the number of constant rectangular regions $s^*$ to estimate.
\item In the case of slow rates, the estimation error is shown to be of order $n^{-5/8} \log^{3/8} n$ under the assumption that the total variation of the image is bounded, cf. Theorem \ref{main.slow}. This rate outperforms the rate $n^{-3/5}$ obtained by \cite{mamm97-2}.
\end{itemize}

These contributions build on previous research in the one-dimensional setting, where the classical example is the fused Lasso (\cite{tibs05,dala17,lin17b, orte18}). The term ``fused Lasso''  often refers to the penalty on the total variation of the coefficients in a linear model. Generalizations of the fused Lasso to other graph structures than the chain graph and to penalties on higher-order discrete derivatives are known under the name of edge Lasso (\cite{shar12,hutt16}) and trend filtering (\cite{tibs14,wang16,vand19,gunt20}), respectively.

\subsection{Technical tools}

The skeleton of our proofs (cf. Section \ref{TV2D-JMLR-Section6}) closely follows the proofs of  oracle inequalities for similar estimation problems (cf. the proofs in \cite{hutt16,dala17,orte19-2,vand19}). The more involved part is adapting the techniques previously applied in one dimension to two dimensions, in particular: the derivation of the synthesis form of the estimator, the bound used to control the noise also known as bound on the increments of the empirical process and the bound on the ``effective sparsity''. 
\begin{itemize}
\item We define  image denoising with total variation as an analysis estimation problem: the observations are approximated by a candidate estimator, some aspects of which are penalized. The penalized aspects are computed via a linear operator, the so-called analysis operator, which in our case corresponds to the two-dimensional total derivative operator. \cite{elad07} explain how to obtain a synthesis formulation of analysis estimators. In the synthesis formulation, the candidate estimator is synthesized by a linear combination of atoms. The atoms constitute the moral equivalent of basis vectors (or basis matrices in the case of image denoising). The collection of atoms is called dictionary. The penalty is then enforced on the convex relaxation of the number of atoms used to synthesize the estimator.

As in our previous work in one dimension (\cite{orte18,vand19}), the first step is to reformulate  total variation image denoising in synthesis form and show that the dictionary consists of a collection of indicator functions of half-intervals, see Section \ref{TV2D-JMLR-Section5} and in particular Lemma \ref{expansion.lemma} and \ref{separate-interactions.lemma}. As a consequence, the estimator will be piecewise constant on rectangular regions. Moreover the insights from the synthesis formulation of the estimator will help us in the further analysis of its behavior. 

\item A central step in the derivation of oracle inequalities is to control the random part of the estimation problem consisting of the increments of an empirical process (cf. \cite{vand09a}), whose increments need to be bounded. We apply to the case of image estimation a technique developed by \cite{dala17}. This technique involves the decomposition of the increments of the empirical process into two parts: a part projected onto a suitable linear space and a remainder, see Lemma \ref{2d-appB-l02} in Subsection \ref{TV2D-JMLR-Section6.1}. The projected part will usually be of low rank, while the remainder will contribute to the ``effective sparsity''.

The dictionary atoms of the synthesis formulation are strongly correlated. Thus, even when choosing a low-rank linear subspace spanned by only few dictionary atoms, the remainder will be small. As a crucial consequence, also the contribution to the effective sparsity will be ``small''.

\item The effective sparsity, see Definition \ref{effective.sparsity} in Subsection \ref{TV2D-JMLR-Section6.2} (in vector form) or Definition \ref{effective.sparsity.matrix} in Subsection \ref{interpolating.section} (in matrix form),  measures indicatively the effective number of parameters we have in the model. Indeed,  oracle inequalities with fast rates usually show an estimation error of the order ``effective sparsity $\times \ \log n$/$n$'', and thus the effective sparsity can be interpreted as the effective degrees of freedom that are spent to estimate the model parameters. In this paper, because of the projection arguments used to bound the increments of the empirical process, the effective sparsity will be multiplied by a factor smaller than $\log n$/$n$ to obtain the fast rate. In the literature, the reciprocal of a stronger version of the effective sparsity is also known under the name of ``compatibility constant'', which is related to the restricted eigenvalue (\cite{vand09b,vand16,vand18}). Also in this case, we extend a previously known one-dimensional bound (\cite{vand19}) based on interpolating polynomials to the two-dimensional case. The new bound is based on an interpolating matrix, which interpolates the active parameters and can be found in Lemma \ref{interpolating.lemma} in Subsection \ref{interpolating.section}.

\end{itemize}

\subsection{Organization of the paper}

In Section \ref{TV2D-JMLR-Section2} we introduce the required notation. In Section \ref{TV2D-JMLR-Section3} we define the model and the estimator and we show how an image can be decomposed into global mean, (centered) row and column means and interaction terms. This is a so-called ANOVA decomposition of an image. 
As a preview of the main result, we state in Section \ref{TV2D-JMLR-Section4} a special case for a square image.
In Section \ref{TV2D-JMLR-Section5} we formulate the estimator for the interaction terms in  synthesis form. In Section \ref{TV2D-JMLR-Section6} we expose the standard techniques used to obtain oracle inequalities with fast and slow rates for general analysis problems. The derivation of bounds on the effective sparsity is given in Section \ref{TV2D-JMLR-Section7}, where we also present the details of our main result, which is an oracle inequality with fast rates. In Section \ref{TV2D-JMLR-Section8} we prove the slow rate $n^{-5/8}\log^{3/8}n$.
Section \ref{TV2D-JMLR-Section9} concludes the paper.
\section{Notation and definitions}\label{TV2D-JMLR-Section2}

We expose the mathematical notation required and some basic definitions.

\subsection{Matrix notation}
We model images as matrices of dimension $n_1 \times n_2$ with entries the real-valued pixel values. Let $n:= n_1 n_2$ denote the total number of pixels of an image.

For two integers $i^* \le i$, we use the notation $[i^* : i]=\{i^*, \ldots, i\} $. If $i^*=1$, we write $[i]:= [1 :  i]$.
For a row index $j$ and a column index $k$, $(j,k) \in [ n_1 ] \times [ n_2]$, we refer to the corresponding entries of the matrix $f\in \R^{n_1 \times n_2}$ in two different ways: either by  $f_{j,k}$ using subscripts or by $f(j,k)$ using arguments.

These two equivalent notations will be useful in different situations. For instance, the notation using arguments will come in handy in Section \ref{TV2D-JMLR-Section5}, when deriving the synthesis form of the estimator.

By $\| f \|_2 $ we denote the Frobenius norm of $f\in \R^{n_1 \times n_2}$, that is
$$\| f \|_2 := \biggl ( \sum_{j=1}^{n_1} \sum_{k=1}^{n_2} f_{j,k}^2 \biggr )^{1/2}.$$
Moreover we define
$$ \norm{f}_1:= \sum_{j=1}^{n_1} \sum_{k=1}^{n_2} \abs{f_{j,k}}$$
as  the sum of the absolute values of the entries of $f\in \R^{n_1 \times n_2}$.

\subsection{Total variation}
Let $f\in \R^{n_1 \times n_2}$ be an image. Let $D_1 \in \R^{(n_1-1) \times n_1}$ and $D_2 \in \R^{(n_2-1)\times n_2}$ be discrete difference operators, that is, matrices of the form
$$ \begin{pmatrix*}
-1 & 1  & & \\
 &  \ddots & \ddots &  \\
 & & -1 & 1 
\end{pmatrix*}.$$

\begin{definition}[Two-dimensional discrete derivative operator]
The two-dimensional discrete derivative operator  $\Delta: \R^{n_1 \times n_2} \mapsto \R^{(n_1-1) \times (n_2-1)}$ is defined as
$$\Delta f = D_1  f D_2^{T}.$$
\end{definition}

Note that $\Delta$ is a linear operator and 
$$ ( \Delta f)_{j,k} := f_{j,k}-f_{j,k-1}-f_{j-1,k}+f_{j-1,k-1}, \ (j,k) \in [2: n_1 ] \times [2: n_2].$$

\begin{definition}[Total variation]
The total variation ${\rm TV}(f)$ of an image $f$ is defined as
$${\rm TV} ( f) :=\norm{\Delta f}_1=  \sum_{j=2}^{n_1}  \sum_{k=2}^{n_2}| f_{j,k}-f_{j,k-1}-f_{j-1,k}+f_{j-1,k-1}|.$$
\end{definition}

\subsection{Active set}\label{2dss07}

Fix some set $S\subseteq [3: n_1 -1] \times [ 3 : n_2-1]$. We can think of $S$ as the subset of coefficients of the total derivative $\Delta f$ which are active, that is nonzero. The cardinality of $S$ is denoted by $s:= |S|$. We write 
$ S:= \{ t_1 , \ldots , t_s \}$. We refer to the elements of  $S$ as jump locations. The coordinates of a jump location $t_m$ are denoted by $(t_{1,m},  t_{2,m} ) $, $m=1 , \ldots , s $. Note that we require that
$2 < t_{1,m} < n_1$ and $2< t_{2,m} < n_2$.
This assumption ensures that we have no boundary effects when doing
partial integration, see Lemma \ref{partial-integration.lemma}.

For two  matrices $a = \{ a_{j,k}\}_{ (j,k)\in [ 2 : n_1]\times [2: n_2] }$
and $b = \{ b_{j,k}\}_{ (j,k)\in [ 2 : n_1]\times [2: n_2] }$
we use the symbol $\odot$ for entry-wise multiplication: $( a \odot b )_{j,k} :={a}_{j,k}  b_{j,k} , \ (j,k) \in [2: n_1] \times [2 : n_2] $.

Moreover we define $ a_S :=  \{ a_{j,k}, \ {(j,k)\in S} \}$ and $\ a_{-S}:= \{ a_{j,k}, \ (j,k) \notin S \}$. 
We will use the same notation $a_S\in \R^{(n_1-1)\times (n_2-1)} $ for the matrix
which shares its entries with $a$ for $(j,k) \in S$ and has all its other
entries equal to zero. 
Similarly, $a_{-S}\in \R^{(n_1-1)\times (n_2-1)}$ shares its entries with $a$ for $(j,k) \not \in S$ and has its other entries equal to zero.

\subsection{Linear projections}

For a linear space ${\cal W} $, 
let ${\rm P}_{\cal W} $ denote the projection operator on ${\cal W}$ and
${\rm A}_{\cal W} := I- {\rm P}_{\cal W} $ the corresponding antiprojection operator. The antiprojection operator on ${\cal W}$ computes the residuals of the orthogonal projection on ${\mathcal W}$.

\section{Preliminaries}\label{TV2D-JMLR-Section3}


We want to estimate the image $f^0 \in \R^{n_1 \times n_2}$ based on its noisy observation $ Y = f^0 + \epsilon $,
where $\epsilon \in \R^{n_1 \times n_2}$ is a noise matrix with i.i.d.\  Gaussian entries with known variance $\sigma^2$.
For the case of unknown variance, \cite{orte19-2} show how to simultaneously estimate the signal and the noise variance by extending the idea of the square-root Lasso (\cite{bell11}) to total variation penalized least-squares.

\subsection{ANOVA decomposition of an image}

In this subsection we introduce the ANOVA decomposition, which separates an image $f \in \R^{n_1 \times n_2}$ into four mutually orthogonal components: the global mean, the two matrices of main effects and the matrix of interaction terms. 


\begin{definition}[Global mean]
The {global mean} $f(\circ, \circ)\in \R$ is defined as
$$f(\circ, \circ) := {1 \over n_1 n_2 } \sum_{j=1}^{n_1} \sum_{k=1}^{n_2} f (j,k) . $$
\end{definition}

\begin{definition}[Main effects]
The  {main effects} are defined as $f(\cdot, \circ)= \{f(j,\circ)\}_{(j,k) \in [n_1]\times [n_2]}$ and $f(\circ, \cdot)= \{f(\circ,k)\}_{(j,k) \in [n_1]\times [n_2]}$,
where
$$ f(j, \circ) := {1 \over n_2} \sum_{k=1}^{n_2} f(j,k) - f(\circ, \circ)  , \ j \in [n_1] $$
and
$$ f(\circ, k) := {1 \over n_1} \sum_{j=1}^{n_1} f(j,k)- f (\circ, \circ ), \ k \in [n_2].$$
\end{definition}

Note that $f(\cdot, \circ)$ has identical columns and $f(\circ, \cdot)$ has identical rows.
We define the total variation of the main effects as
$${\rm TV}_1 (f) := \sum_{j=2}^{n_1}| f(j,\circ) - f(j-1, \circ)|$$
and
$${\rm TV}_2 (f) := \sum_{k=2}^{n_2}| f(\circ, k) - f(\circ, k-1)|.$$

\begin{definition}[Interaction terms]
The {interaction terms} are defined as
$$ \tilde f (j,k) = f (j,k) - f(\circ, \circ)  - f(j, \circ) - f(\circ, k) , \ (j,k) \in [n_1 ] \times [n_2].$$
\end{definition}

Let $\psi^{1,1}= \{1\}^{n_1\times n_2}$. The ANOVA decomposition of an image $f$ is
$$ f= f(\circ, \circ)\psi^{1,1}+  f( \cdot, \circ) +f( \circ, \cdot) +  \tilde f $$
and is illustrated in Figure \ref{fig.1} for an image from the Leaf Shape Database.
Note that $f(\circ,\circ)\psi^{1,1}, \ f(\cdot, \circ), \ f (\circ, \cdot)$ and $\tilde f$ are mutually orthogonal and thus we have that
$$ \| f \|_2^2 =n_1 n_2 f^2 (\circ, \circ)   + \| f (\cdot , \circ) \|_2^2 + \| f (\circ , \cdot)  \|_2^2 + \| \tilde f \|_2^2.$$

We now use the ANOVA decomposition to  define the estimator for the interaction terms, which is the main object studied in this paper.

\begin{figure}
\centering
\setkeys{Gin}{width=0.6\textwidth}
\includegraphics{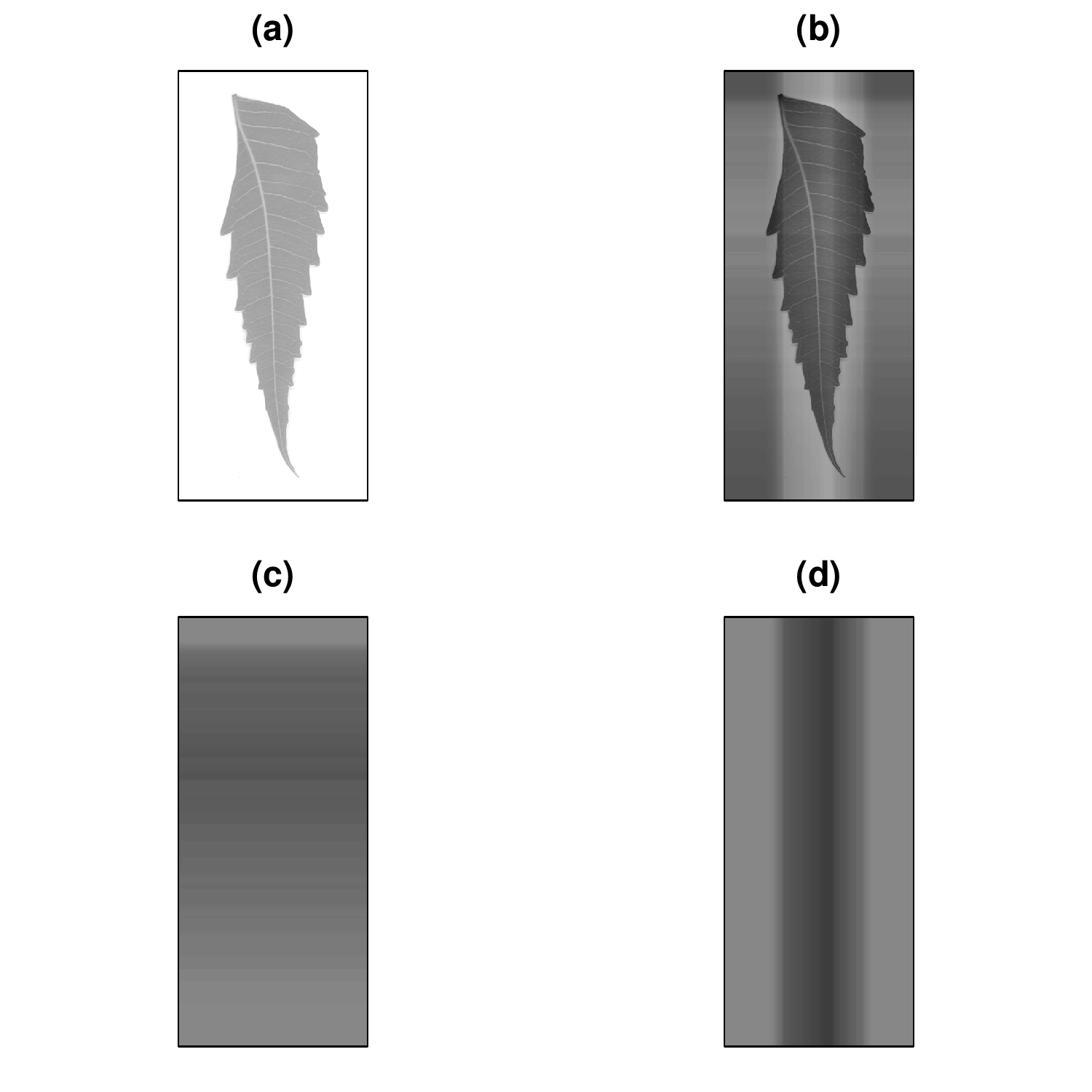}
\caption{The ANOVA decomposition of the image lg1 from the Leaf Shapes Database by \protect\cite{wagh}. The image (a) is the original image $f$, (b) represents the interaction terms $\tilde{f}$ and (c) and (d) are the main effects $f(\cdot,\circ)$ and $f(\circ,\cdot)$, respectively.}\label{fig.1}
\end{figure}

\subsection{The estimator}

We consider the estimator
\begin{equation*} \hat f := \argmin_{{\rm f} \in \R^{n_1 \times n_2} }
\biggl \{ \| Y - {\rm f} \|_2^2 / n + 2 \lambda {\rm TV} ( {\rm f} ) + 2 \lambda_1 {\rm TV} _1 ({\rm f}) +
2 \lambda_2 {\rm TV} _2 ( {\rm f}  )   \biggr \}  ,\end{equation*}
where $\lambda,\lambda_1,\lambda_2>0$ are positive tuning parameters.
We call $\hat f$ the two-dimensional  total variation regularized least squares estimator.  This estimator has the form of an analysis estimator (\cite{elad07}): it approximates the observations under a regularization penalty on the $\ell_1$-norm of a linear operator of the signal $f$.

Since $Y(\circ, \circ)\psi^{1,1}$, $Y( \cdot, \circ)$, $Y ( \circ, \cdot)$ and $\tilde{Y}$ are mutually orthogonal, we may decompose the estimator as
$$ \hat{f}= \hat{f}( \circ, \circ) \psi^{1,1}+ \hat{f}( \cdot, \circ)+ \hat{f}( \circ, \cdot)+ \hat{\tilde{f}},$$
where
\begin{align*}
\hat{f}( \circ, \circ)& := Y( \circ , \circ),\\
\hat{f}( \cdot, \circ)&:= \argmin_{{\rm f}\in \R^{n_1 \times n_2}} \left\{\|{Y(\cdot, \circ)- {\rm f}}\|^2_2/n + 2 \lambda_1 \text{TV}_1 ({\rm f}) \right\}, \\
\hat{f}( \circ, \cdot)&:= \argmin_{{\rm f}\in \R^{n_1 \times n_2}} \left\{\|{Y(\circ, \cdot)- {\rm f}}\|^2_2/n + 2 \lambda_2 \text{TV}_2 ({\rm f}) \right\} ,\\
\hat{\tilde{f}}&:= \argmin_{{\rm f}\in \R^{n_1 \times n_2}} \left\{\|{\tilde{Y}- {\rm f}}\|^2_2/n + 2 \lambda \text{TV} ({\rm f}) \right\} .
\end{align*}

We can also apply the ANOVA decomposition to the underlying image $f^0$:
$$ f^0= f^0(\circ,\circ) \psi^{1,1} + f^0( \cdot, \circ)+ f^0( \circ, \cdot)+ \tilde{f^0}.$$
Then we can estimate $f^0(\circ,\circ)$ by $\hat{f}( \circ, \circ)$, $f^0( \cdot, \circ)$ by $\hat{f}( \cdot, \circ)$, $f^0( \circ, \cdot)$ by $\hat{f}( \circ, \cdot)$ and $\tilde{f^0}$ by $\hat{\tilde{f}}$.

Ordinary least squares is an appropriate method for estimating $f^0(\circ,\circ)$. Indeed the rate of convergence of $Y(\circ,\circ)$ to $f^0(\circ,\circ)$ is $n^{-1}$. The estimation of $f^0( \cdot, \circ)$ and $f^0( \circ, \cdot)$ by ordinary least squares would lead to rates of convergence of order $n_1/n$ and $n_2/n$, respectively. In  this paper we show that both the fast and the slow rate of estimation of $\tilde{f^0}$ by $\hat{\tilde{f}}$ are faster than $n^{-1/2}$, which is the best-case rate of estimation of the main effects by ordinary least squares. 
Without the regularization terms $\lambda_1{\rm TV}_1(f)$ and $\lambda_2{\rm TV}_2(f)$, the speed of estimation of $f^0$ would be limited by the estimation of the main effects.  We therefore propose a regularized method, the so-called fused Lasso (\cite{tibs05}), to estimate both $f^0( \cdot, \circ)$ and $f^0( \circ, \cdot)$ at a faster rate than $n^{-1/2}$. 

Also the noise term $\epsilon$ can be decomposed into four orthogonal components:
$$ \epsilon= \epsilon(\circ,\circ) \psi^{1,1} + \epsilon( \cdot, \circ)+ \epsilon( \circ, \cdot)+ \tilde{\epsilon}.$$
All the four terms of the decomposition present some correlation structure.
This is however not a problem for the analysis of the respective estimators, since the four terms can be seen as as  the projections onto four mutually orthogonal linear subspaces of $\R^{n_1 \times n_2}$.  
Indeed, in the analysis of $\hat{f}( \cdot, \circ)$, the empirical process ${\rm trace}(\epsilon( \cdot, \circ)'f( \cdot, \circ)/n)$ appears. By the idempotence of projection matrices, we have that ${\rm trace}(\epsilon( \cdot, \circ)'f( \cdot, \circ)/n)= {\rm trace}((\epsilon(\circ,\circ)\psi^{1,1}+\epsilon( \cdot, \circ))'f( \cdot, \circ)/n) $, where $\epsilon(\circ,\circ)\psi^{1,1}+\epsilon( \cdot, \circ)$ has rowwise iid entries. Similarly, in the analysis of $\hat{\tilde{f}}$ it holds that ${\rm trace}(\tilde{\epsilon}'\tilde{f}/n)={\rm trace}({\epsilon}'\tilde{f}/n)$.

A slow rate for ${\hat f}(\cdot, \circ)$ and ${\hat f}(\circ, \cdot)$ of order $n^{-2/3}$ is shown in \cite{mamm97-2} using entropy calculations but with large constants and in \cite{orte19-2,vand19} with small constants but an additional logarithmic term.

The adaptivity of the estimators ${\hat f}(\cdot, \circ)$ and ${\hat f}(\circ, \cdot)$ has been established in \cite{lin17b,gunt20,dala17,orte18,orte19-2,vand19}. Let ${\rm s}$ denote the number of jumps in any column of $f^0(\cdot, \circ)$ or the number of jumps in any row of $f^0(\circ, \cdot)$.
We give the fast rates exposed in these papers for the case that the ${\rm s}$ jumps lie on a regular grid.

\cite{lin17b} obtain the rate $\bigo\left({{\rm s}}\left((\log {\rm s}+\log\log n)\log n +\sqrt{{\rm s}}\right)/{n}\right)$,
under the choice of the tuning parameter $\lambda\asymp n^{-\frac{1}{2}} {\rm s}^{-\frac{1}{4}}$, for $n$ large enough.

Under the choice of the tuning parameter $\lambda\asymp \sqrt{\log n/n}$
\cite{dala17} obtain an oracle inequality with the rate $\bigo\left({s\log n}\left({\rm s}+ \log n\right) /{n} \right)$.

Under the choice of the tuning parameter $\lambda\asymp \sqrt{\log n/({\rm s}n)}$, \cite{orte18,orte19-2,vand19} obtain the rate $ \bigo\left({{\rm s}\log^2n}/{n} \right)$,
which is improved by a log term by \cite{gunt20} for a choice of the tuning parameter depending on $f^0$.

Since the estimation of $f^0(\circ,\circ)$, $f^0( \cdot, \circ)$, $f^0( \circ, \cdot)$ can be undertaken in a satisfactory way with estimators already widely studied in the literature,  we are  going to focus on establishing a slow rate and the adaptivity for the estimator of the interaction terms $\hat{\tilde{f}}$. 
For slow rates it will turn out  that the part limiting the speed of estimation of $f^0$ is the estimation of the interaction terms, while for fast rates we will show that  the interaction terms too can be estimated in an adaptive manner.

\section{A taste of the main result}\label{TV2D-JMLR-Section4}

We present our main result, Theorem \ref{main.theorem} from Section \ref{TV2D-JMLR-Section7}, for  the special case of a square image  ($n_1=n_2$) and an active set $S$ defining a regular grid of cardinality $\sqrt{s} \times \sqrt{s}$. To be understood in its generality, Theorem \ref{main.theorem} requires the background knowledge from Section \ref{TV2D-JMLR-Section6}.

\begin{theorem}[Main result for fast rates: a special case]\label{thm4} Let $n_1=n_2$. Let $g \in \R^{n_1 \times n_2}$ be arbitrary. Let $S$ be an arbitrary subset of size $s:=\abs{S}$ of $[3:n_1-1]\times [3:n_2-1]$ defining a regular grid of cardinality $\sqrt{s}\times \sqrt{s}  $ parallel to the coordinate axes.
Choose
$$\lambda \ge 4 \sigma \sqrt{\frac{\log(2n)}{n\sqrt{s}}}.$$
Then, with probability at least $1-1/n$, it holds that
\begin{equation*}
\norm{\hat{\tilde f}-\tilde{f}^0}^2_2/n \le \norm{g-\tilde{f}^0}^2_2/n + 4 \lambda \norm{(\Delta g)_{-S}}_1 + \left(\sigma \sqrt{\frac{s}{n}}+\sigma \sqrt{\frac{2 \log(2n)}{n}}+ \lambda \sqrt{\frac{8s^2 n \log(e^2n)}{(\sqrt{n}-1)^2}} \right)^2.
\end{equation*}
\end{theorem}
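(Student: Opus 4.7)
The plan is to specialize the general fast-rate oracle inequality (Theorem \ref{main.theorem}) to the regular grid geometry, so I will describe how the proof would unfold as if establishing it from scratch, following the skeleton announced in Section \ref{TV2D-JMLR-Section6}.

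First I would start with the basic inequality coming from the optimality of $\hat{\tilde f}$: for any candidate $g \in \R^{n_1 \times n_2}$,
$$\norm{\tilde Y - \hat{\tilde f}}_2^2/n + 2\lambda\,{\rm TV}(\hat{\tilde f}) \le \norm{\tilde Y - g}_2^2/n + 2\lambda\,{\rm TV}(g).$$
Substituting $\tilde Y = \tilde f^0 + \tilde \epsilon$ and rearranging turns this into
$$\norm{\hat{\tilde f}-\tilde f^0}_2^2/n \le \norm{g - \tilde f^0}_2^2/n + \frac{2}{n}\,{\rm trace}\bigl(\tilde\epsilon^{\top}(\hat{\tilde f}-g)\bigr) + 2\lambda\bigl({\rm TV}(g)-{\rm TV}(\hat{\tilde f})\bigr).$$
Next I would split ${\rm TV}(\hat{\tilde f}) = \norm{(\Delta \hat{\tilde f})_S}_1 + \norm{(\Delta\hat{\tilde f})_{-S}}_1$ and similarly for $g$, and apply the triangle inequality on the active set to produce the familiar $\norm{(\Delta(\hat{\tilde f}-g))_S}_1$ on the right-hand side, while the $-S$ part contributes $\norm{(\Delta g)_{-S}}_1 - \norm{(\Delta\hat{\tilde f})_{-S}}_1$.

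The crux of the argument is bounding the empirical process term $n^{-1}\,{\rm trace}(\tilde\epsilon^{\top}(\hat{\tilde f}-g))$. Using the synthesis formulation (Lemma \ref{expansion.lemma}--\ref{separate-interactions.lemma}), one writes $\hat{\tilde f}-g$ in the dictionary of half-interval indicator matrices, identifies the linear subspace $\mathcal W$ spanned by the atoms corresponding to the jump locations in $S$ (of dimension at most $s$ for the grid), and decomposes $\tilde\epsilon = {\rm P}_{\cal W}\tilde\epsilon + {\rm A}_{\cal W}\tilde\epsilon$ as in Lemma \ref{2d-appB-l02}. The projected piece contributes
$$\norm{{\rm P}_{\cal W}\tilde\epsilon}_2 \cdot \norm{\hat{\tilde f}-g}_2/n$$
which concentrates at the rate $\sigma\sqrt{s/n}$ by a $\chi^2$ tail bound (accounting for the two-dimensional nature of the subspace), explaining the first summand of the estimation error. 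For the antiprojected piece ${\rm A}_{\cal W}\tilde\epsilon$, writing it through the dictionary yields a maximum over at most $2n$ Gaussian inner products (half-intervals along the two axes), giving a uniform bound $\sigma\sqrt{2\log(2n)/n}$ with probability at least $1 - 1/n$; this accounts for the second summand.

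The antiprojected piece also multiplies $\norm{(\Delta(\hat{\tilde f}-g))_S}_1$, and combined with the $2\lambda$-term this is precisely where the effective sparsity enters. Invoking the interpolating matrix bound of Lemma \ref{interpolating.lemma} for the regular $\sqrt{s}\times\sqrt{s}$ grid, the compatibility constant relating $\norm{(\Delta h)_S}_1$ to $\norm{h}_2/\sqrt n$ scales like $\sqrt{s^2 n \log(e^2 n)/(\sqrt n - 1)^2}$, which produces the third summand once combined with $\lambda$. The expected main obstacle is exactly this step: verifying that the 2D interpolating construction yields the quoted grid-specific constant, because this is where the two-dimensional geometry differs most sharply from the one-dimensional fused-Lasso analysis. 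Finally, collecting the three noise contributions into a single bound $\norm{h^{*}}_2\,\norm{\hat{\tilde f}-g}_2/n$ and applying $2ab \le a^2/\tau + \tau b^2$ with $\tau = 1$ to absorb $\norm{\hat{\tilde f}-g}_2^2/n \le 2\norm{\hat{\tilde f}-\tilde f^0}_2^2/n + 2\norm{g-\tilde f^0}_2^2/n$ via parallelogram, together with the choice $\lambda \ge 4\sigma\sqrt{\log(2n)/(n\sqrt s)}$ to dominate the residual noise terms on the active set, yields the stated oracle inequality.
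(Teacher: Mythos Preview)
Your overall skeleton is right, but the roles of the projected and antiprojected pieces are swapped in a way that leaves a genuine gap. The term $\sigma\sqrt{2\log(2n)/n}$ does \emph{not} come from the antiprojected noise: in Lemma \ref{2d-appB-l02} the $\chi^2$ tail bound on the projected part ${\rm P}_S\tilde\epsilon$ contributes \emph{both} $\sigma\sqrt{s/n}$ (mean) and $\sigma\sqrt{2x/n}$ (deviation, here with $x=\log(2n)$). The antiprojected part is handled entirely differently: each antiprojected atom satisfies $\|{\rm A}_S\tilde\psi^{j,k}\|_2/\sqrt n\le\tilde v_{j,k}$ with uniform bound $\tilde\gamma$, and a Gaussian maximal inequality then yields $\tilde\epsilon^T{\rm A}_S f/n\le\lambda\|v_{-S}\odot(\Delta f)_{-S}\|_1$ \emph{provided} $\lambda\ge\tilde\gamma\lambda_0(t)$. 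For the regular $\sqrt s\times\sqrt s$ grid on a square image, Lemma \ref{antiprojections.lemma} gives $d_{i,\max}(S)\asymp n_i/\sqrt s$ and hence $\tilde\gamma\asymp s^{-1/4}$; this is exactly the origin of the $s^{-1/4}$ in the tuning condition $\lambda\ge 4\sigma\sqrt{\log(2n)/(n\sqrt s)}$. Your proposal never computes $\tilde\gamma$ and so does not justify this constraint. The antiprojected bound then combines with the $-\lambda\|(\Delta f)_{-S}\|_1$ coming from the TV triangle inequality (not with the $S$-part, as you write) to leave $-\lambda\|(1-v)_{-S}\odot(\Delta f)_{-S}\|_1$; this residual, together with $\lambda q_S^T(\Delta f)_S$, is precisely what the effective sparsity of Definition \ref{effective.sparsity} bounds.

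Separately, your final absorption step fails as written. Starting from the weak optimality inequality (which has no $\|\hat{\tilde f}-g\|_2^2/n$ on the left), taking $\tau=1$ in $2ab\le a^2+b^2$, and then using $\|\hat{\tilde f}-g\|_2^2\le 2\|\hat{\tilde f}-\tilde f^0\|_2^2+2\|g-\tilde f^0\|_2^2$ places a coefficient $2$ on $\|\hat{\tilde f}-\tilde f^0\|_2^2/n$ on the right; after moving it left you are left with a \emph{negative} coefficient and the inequality is vacuous. The paper instead uses the KKT-based basic inequality (Lemma \ref{2d-appB-l01}), which already carries $+\|\hat{\tilde f}-g\|_2^2/n$ on the left-hand side and cancels the cross term exactly, yielding the constants in the statement.
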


If we choose $g=\tilde{f}^0$,  $S$ to be the active set of $\tilde{f}^0$ (given it is a regular grid) and $ \lambda = 4\sigma  \sqrt{{\log(2n)}}/\sqrt{n\sqrt{s}}$, then, with probability at least $1-1/n$, we have that $ \norm{\hat{\tilde f}-\tilde{f}^0}^2_2/n= \bigo\left({s^{3/2}\log^2(n)}/{n} \right)$.
If we instead make the choice $\lambda= 4 \sigma \sqrt{\log(2n)/n}$, which does not depend on $S$, with probability at least $1-1/n$, we have that
$ \norm{\hat{\tilde f}-\tilde{f}^0}^2_2/n= \bigo\left({s^{2}\log^2(n)}/{n} \right)$.

In both cases, the dependence on $s$ is worse than the linear dependence which has been  proven for the one-dimensional case by \cite{dala17,gunt20,vand19}. However, if $s$ is constant the rate is parametric, up to the log factor. \cite{fang19}  prove a similar result restriced to active sets $S$ such that $s=1$.

Theorem \ref{thm4} gives us theoretical guarantees holding for all $g\in \R^{n_1 \times n_2}$ and active sets $S$ defining a regular grid, no matter the structure of $f^0$. Therefore $g$ and $S$ (under some constraints) can be seen as free parameters. The upper bound can be minimized over all $g\in \R^{n_1 \times n_2}$ and all active sets $S$ defining a regular grid. However, minimizing the upper bound requires the knowledge of ${\tilde f}^0$. A pair $(f^*, S^*)$ minimizing the upper bound is called ``an oracle'', since ${\tilde f}^0$ is typically unknown. 
Theorem \ref{thm4} is an oracle inequality in the sense that it guarantees that the (properly tuned) estimator behaves almost as good as if it would know the aspects of ${\tilde f}^0$ required to minimize the upper bound and optimally trade off all of its terms.

Theorem \ref{thm4} is also an adaptive result: the estimator $\hat{\tilde f}$ is shown to adapt to the underlying image ${\tilde f}^0$, in particular to the number and location of its jumps. The adaptation to ${\tilde f}^0$  is achieved by means of the optimal tradeoff between the approximation of ${\tilde f}^0$ by the oracle $f^*$ and the almost parametric rate of estimation of the rectangular pieces defined by the oracle active set $S^*$.

We will expose the more general version of this theorem holding for active sets $S$  not necessarily defining a regular grid in Section \ref{TV2D-JMLR-Section7}.

\begin{figure}[h]
\centering
\includegraphics[width=\textwidth]{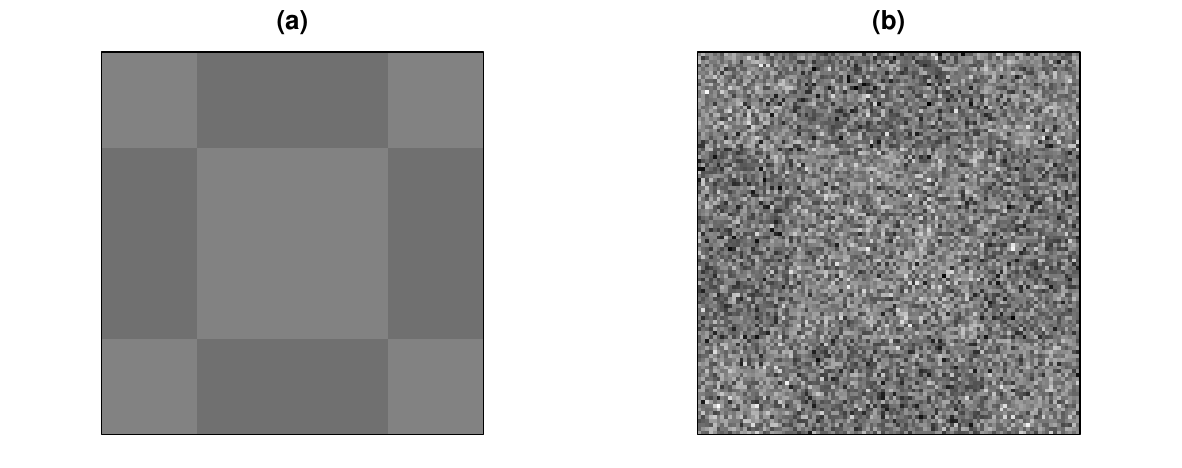}
\caption{Plot {\bf (a)} shows ${\tilde f}^0$ for $f^0$ as in Equation \eqref{eq.truth}. Plot {\bf (b)} shows ${\tilde Y}={\tilde f^0}+{\tilde \epsilon}$ for a realization of ${\tilde \epsilon}$ with $\sigma=1$.}
\label{fig.truth}
\end{figure}

For $n_1=n_2$ being a multiple of 4 consider the image $f^0 \in \R^{n_1 \times n_2}$ defined as
\begin{equation}\label{eq.truth}
f^0_{j,k}=1_{\{n_1/4+1\le j \le 3n_1/4\}} 1_{\{n_2/4+1 \le k \le 3n_2/4\}},\ (j,k) \in [n_1]\times [n_2].
\end{equation}

Figure \ref{fig.truth} shows ${\tilde f}^0$ and ${\tilde Y}={\tilde f^0}+{\tilde \epsilon}$ for $n_1=100$ and $\sigma=1$. Figure \ref{fig.sim} shows some simulations result for denoising the image ${\tilde Y}={\tilde f^0}+{\tilde \epsilon}$ with $\sigma=1$, where, for $n_1=n_2 \in \{4, 8,  \ldots, 196,200\}$, $f^0$ is taken as in Equation \eqref{eq.truth}. For such images, $\Delta f^0$ has 4 nonzero components. Therefore we chose $s=4$. The estimator was computed via a detour through its synthesis formulation (see Section \ref{TV2D-JMLR-Section5}), which allowed to use the \textsf{R} package \texttt{glmnet}.

\begin{figure}[h]
\centering
\includegraphics[width=\textwidth]{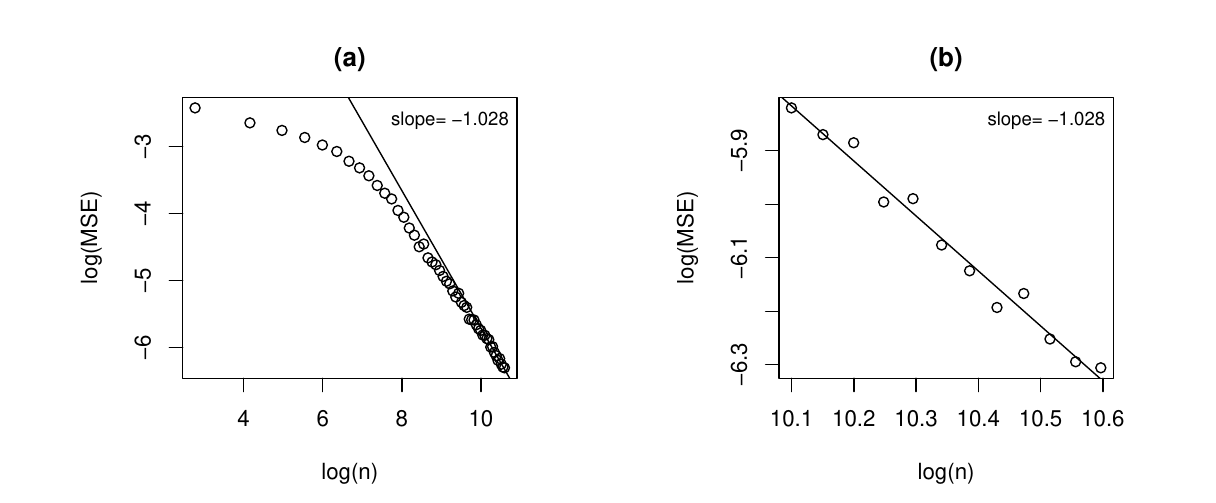}
\caption{Plot {\bf (a)} displays  the logarithm of the average mean squared error of the estimator over 40 realizations of the noise term versus $\log(n)$, for $n_1=n_2 \in \{4, 8,  \ldots, 196,200\}$. The least squares fit with slope $-1.028$ is based on the values for $n_1=n_2 \in \{156, \ldots, 200\}$ shown in detail in plot {\bf (b)}.}
\label{fig.sim}
\end{figure}


The results of the simulation support our findings: if tuned with $\lambda= \sqrt{\log(2n)/(2n)}$, the estimator $\hat{\tilde f}$ converges at an almost parametric rate to the underlying piecewise rectangular image ${\tilde f}^0$. However, the rate of convergence $n^{-1}$ (up to log terms) is achieved only for $n$ large enough and the tuning parameter has to be chosen smaller by a constant factor than the smallest theoretical choice $\lambda= 4 \sqrt{\log(2n)/(2n)}$ suggested by Theorem \ref{thm4}.
\section{Synthesis form}\label{TV2D-JMLR-Section5}
Recall the analysis estimator 
$$\hat{\tilde{f}}:= \argmin_{{\rm f}\in \R^{n_1 \times n_2}} \left\{\|{\tilde{Y}- {\rm f}}\|^2_2/n + 2 \lambda {\rm TV} ({\rm f}) \right\}.$$
Analysis estimators approximate the observations under a penalty on the norm of a linear operator -- a so-called analysis operator -- applied to the candidate estimator ${\rm f}$, in this case ${\rm TV}({\rm f})=\norm{\Delta {\rm f}}_1$.

Analysis estimators can be rewritten as  synthesis estimators (\cite{elad07}).
A well-known instance of synthesis estimator is the Lasso (\cite{tibs96}).
The synthesis approach to  estimation is constructive: the signal is approximated by a linear combination of atoms under a penalty on the norm of the coefficients of this linear combination.
By looking at the properties of the collection of atoms -- the so-called dictionary -- one can gain some insights into the structure of the estimator. 

In our case, the synthesis formulation shows that the estimator $\hat{\tilde{f}}$  produces piecewise rectangular estimates.
Moreover,  the synthesis formulation of $\hat{\tilde{f}}$ will be of great help in computing  ``bounds on the antiprojections'' (see Definition \ref{def.bound.antiproj} and Lemmas \ref{antiprojections.lemma} and \ref{mesh.antiproj}), which  are essential ingredients of Lemma \ref{2d-appB-l02} to control the increments of the empirical process.
For a detailed discussion on the relation between analysis and synthesis estimators we refer to \cite{elad07} and to \cite{orte19-1}, who focus on analysis and synthesis in total variation regularization.


We first express a matrix $f$ as linear combination of dictionary matrices.
We then show that $\hat{\tilde f}$ can be written as a synthesis estimator using these dictionary matrices.
Here the notation with arguments instead of subscripts comes in handy. 

Consider some $f \in \R^{n_1 \times n_2}$.
We may write for $j \in [n_1]$ and $k \in [n_2] $,
$$ f (j,k)  = \sum_{j^{\prime}=1}^{n_1} \sum_{k^{\prime}=1}^{n_2} \beta_{j^{\prime}, k^{\prime}} 
\psi^{j^{\prime}, k^{\prime}} (j,k) ,$$
where for $( j^{\prime}, k^{\prime} )\in [n_1]\times[n_2]$ the dictionary matrices are $\psi^{j^{\prime}, k^{\prime}}$ with
$$ \psi^{j^{\prime}, k^{\prime}} (j,k) = 1_{ \{ j \ge j^{\prime}, k \ge k^{\prime} \}}, \ (j,k)\in [n_1]\times[n_2]  $$
and
$$\beta_{j^{\prime},k^{\prime}}:=\begin{cases} f(1,1) ,& (j^{\prime} k^{\prime})=(1,1), \\
f(j',1)-f(j'-1,1),&  (j^{\prime},k^{\prime}) \in
[2: n_1] \times [1], \\
 f(1,k')-f(1,k'-1) ,& (j^{\prime}, k^{\prime}) \in [1]\times [2: n_2] , \\
( \Delta f)_{j^{\prime}, k^{\prime} } 
,&  (j^{\prime} ,k^{\prime} ) \in [2: n_1] \times [2 :n_2] . \end{cases}$$

We call the collection of matrices $\{\psi^{j^{\prime}, k^{\prime}}\}_{(j',k')\in [n_1]\times [n_2]} $ the dictionary. The dictionary consists of a collection of indicator functions of half intervals. Therefore, a sparse linear combination of elements of the dictionary will be piecewise constant on rectangular sets.

Define 
$$\tilde \psi^{j,k}:=\begin{cases}
\psi^{1,1},&   (j,k)=(1,1), \\
 \psi^{j,1} -  \psi^{j,1} (\circ , \circ)= {\rm A}_{{\rm span}(\psi^{1,1})} \psi^{j,1}   ,& (j,k)\in [2:n_1] \times [1],  \\
\psi^{1,k} -  \psi^{1, k}  (\circ, \circ)={\rm A}_{{\rm span}(\psi^{1,1})} \psi^{1,k}   ,& (j,k)\in [1]\times [2:n_2], \\
 \psi^{j,k} -  \psi^{j , k} (\cdot, \circ)    - \psi^{j , k} (\circ, \cdot)
 - \psi^{j,k} (\circ, \circ)& \\
 \ \ ={\rm A}_{{\rm span}(\{\psi^{j,1}\}_{j \in [n_1]}, \{\psi^{1,k}\}_{k \in [n_2]} )} \psi^{j,k}  , & (j,k) \in [2:n_1]\times [2:n_2]   . \end{cases}$$

The four resulting linear spaces $ {\rm span}(\tilde \psi^{1,1})$, ${\rm span}(\{\tilde \psi^{j,1}\}_{j \in [2:n_1]})$, ${\rm span}(\{\tilde \psi^{1,k}\}_{k \in [2:n_2]})$ and ${\rm span}(\{\tilde \psi^{j,k}\}_{(j,k) \in [2:n_1]\times [2:n_2]})$  are mutually orthogonal. Moreover, the atoms of the dictionary $\{\tilde{\psi}^{j,k}\}_{(j,k)\in [n_1]\times[n_2]}$ are piecewise constant on rectangular sets.

 Lemma \ref{expansion.lemma} gives the form of the coefficients needed to express an image $f$ as a linear combination of  the  matrices $\{\tilde \psi^{j,k}\}_{(j,k)\in [n_1]\times [n_2]}$.
 
 \begin{lemma}[Construct a piecewise rectangular image] \label{expansion.lemma}
 It holds that
 $$ f= \sum_{j=1}^{n_1} \sum_{k=1}^{n_2} \tilde \beta_{j,k} \tilde \psi^{j,k} , $$
 where
 $$ 
 \tilde \beta_{j,k} =\begin{cases} f ( \circ, \circ), & (j,k)=(1,1), \\
f ( j, \circ)-f ( j-1, \circ) ,&   (j,k) \in [2: n_1]\times [1] , \\
 f ( \circ, k )-f ( \circ, k -1) ,&   (j,k) \in [1] \times [2: n_2] , \\
 (\Delta f)_{j,k} ,&   (j,k) \in [ 2:n_1]\times [2: n_2]  . 
 \end{cases}$$
 \end{lemma}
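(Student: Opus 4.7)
The natural approach is to exploit the ANOVA decomposition already established: $f = f(\circ,\circ)\psi^{1,1} + f(\cdot,\circ) + f(\circ,\cdot) + \tilde f$, and to verify the claimed expansion on each of the four mutually orthogonal pieces separately, since the modified atoms $\tilde\psi^{j,k}$ were built precisely to lie in these four subspaces. The global mean piece is immediate: $\tilde\psi^{1,1}=\psi^{1,1}$ by definition, so $f(\circ,\circ)\psi^{1,1}=\tilde\beta_{1,1}\tilde\psi^{1,1}$.

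For the row main effect piece, both $f(\cdot,\circ)$ and the atoms $\tilde\psi^{j,1}$ have identical columns, so the identity $f(\cdot,\circ) = \sum_{j=2}^{n_1}(f(j,\circ) - f(j-1,\circ))\tilde\psi^{j,1}$ reduces to a one-dimensional telescoping statement. I would plug in $\tilde\psi^{j,1}(j',k') = 1_{\{j' \ge j\}} - (n_1-j+1)/n_1$ and split the right-hand side into two sums: the first sum telescopes to $f(j',\circ) - f(1,\circ)$, while the second, after summation by parts and use of the centering $\sum_{j'} f(j',\circ) = 0$, evaluates to $-f(1,\circ)$; the two contributions combine to give $f(j',\circ)$. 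The column main effect piece follows by symmetry.

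The interaction piece, namely the claim $\tilde f = \sum_{(j,k) \in [2:n_1]\times[2:n_2]}(\Delta f)_{j,k}\tilde\psi^{j,k}$, is the step I expect to require the most care, and I would handle it by applying $\Delta$ to both sides rather than comparing entries. Any matrix of the form $u(j)+v(k)$ is annihilated by $\Delta$, so $\Delta f = \Delta \tilde f$. A short direct computation, using the inclusion-exclusion that defines $\Delta$ applied to the indicator $\psi^{j,k}$, shows that $\Delta\psi^{j,k}$ is the matrix with a single $1$ at position $(j,k)$. Since $\tilde\psi^{j,k}-\psi^{j,k}$ is additive in $j$ and $k$, it lies in $\ker(\Delta)$, so $\Delta\tilde\psi^{j,k}$ is the same indicator matrix. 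Applying $\Delta$ term by term to the proposed sum therefore returns exactly $\Delta f$.

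To conclude equality of matrices rather than just of their $\Delta$-images, I would observe that the interaction subspace (the orthogonal complement of the span of global mean and main effects) has dimension $(n_1-1)(n_2-1)$ and intersects $\ker(\Delta)$ trivially, because elements of $\ker(\Delta)$ are additive functions $u(j)+v(k)$ and those with zero main effects and zero global mean must vanish. Hence $\Delta$ restricts to a bijection from the interaction subspace onto $\R^{(n_1-1)\times(n_2-1)}$. Both $\tilde f$ and the proposed sum lie in the interaction subspace and share the same $\Delta$-image, so they coincide. The only real bookkeeping hurdle is checking that the antiprojections in the definition of $\tilde\psi^{j,k}$ really land in the intended ANOVA component; everything else is a telescoping or rank computation.
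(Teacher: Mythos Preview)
Your proof is correct (modulo a harmless sign slip: the second telescoping sum evaluates to $+f(1,\circ)$, not $-f(1,\circ)$, which is precisely what you need for the pieces to combine to $f(j',\circ)$). The route, however, is genuinely different from the paper's. The paper does not work component-by-component in the ANOVA decomposition; instead it first derives the change-of-basis identity
\[
\psi^{j,k} = \tilde\psi^{j,k} + \psi^{1,k}(\circ,\circ)\,\tilde\psi^{j,1} + \psi^{j,1}(\circ,\circ)\,\tilde\psi^{1,k} + \psi^{j,k}(\circ,\circ)\,\psi^{1,1},
\]
substitutes this into the already-established expansion $f=\sum_{j,k}\beta_{j,k}\psi^{j,k}$, and collects coefficients; the claimed formulas for $\tilde\beta_{j,k}$ then drop out by direct computation. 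Your approach is more structural: it exploits orthogonality to handle each ANOVA piece separately and replaces the entrywise verification of the interaction term by the observation that $\Delta$ is a bijection from the interaction subspace onto $\R^{(n_1-1)\times(n_2-1)}$. The paper's argument is shorter and avoids any dimension counting, but yours makes transparent \emph{why} the $\tilde\psi^{j,k}$ are the right atoms---they are exactly the ANOVA-projections of the original $\psi^{j,k}$---and the kernel/rank argument is a clean way to bypass the explicit coefficient bookkeeping.
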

 \begin{proof}
See Appendix \ref{proof.expansion.lemma}.
 \end{proof}

The next lemma, based on Lemma \ref{expansion.lemma}, gives a synthesis form of the estimator $\tilde{ \hat{f}}$.

\begin{lemma}[Synthesis formulation]\label{separate-interactions.lemma} We have
$$ \hat {\tilde f} = \sum_{j=2}^{n_1} \sum_{k=2}^{n_2}
\hat { \tilde \beta}_{j,k} \tilde \psi^{j,k}, $$
where
\begin{equation*}
\hat {\tilde \beta }_{j,k} = \argmin_{ \{ \beta_{j,k} \}_{(j,k)\in [2:n_1]\times [2:n_2]} } \biggl \{ \| Y - \sum_{j=2}^{n_1}  \sum_{k=2}^{n_2}  \beta_{j,k} \tilde \psi^{j,k} \|_2^2/n +2 \lambda \sum_{j=2}^{n_1}  \sum_{k=2}^{n_2}   | \beta_{j,k} |  \biggr \} .
 \end{equation*}
\end{lemma}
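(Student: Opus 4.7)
The plan is to reduce the analysis problem defining $\hat{\tilde f}$ to a synthesis (Lasso-type) problem by exploiting the orthogonality built into the dictionary $\{\tilde\psi^{j,k}\}$. I would first argue that $\hat{\tilde f}$ must lie in the interaction subspace $\mathcal{I}:={\rm span}(\{\tilde\psi^{j,k}\}_{(j,k)\in[2:n_1]\times[2:n_2]})$. The key observation is that ${\rm TV}(f)$ depends only on the interaction component of $f$: since $\Delta f=D_1 f D_2^T$, the global-mean contribution $f(\circ,\circ)\psi^{1,1}$ is constant and thus killed by $\Delta$, the main effect $f(\cdot,\circ)$ has identical columns and is killed by right-multiplication by $D_2^T$, and $f(\circ,\cdot)$ has identical rows and is killed by left-multiplication by $D_1$. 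Because $\tilde Y\in\mathcal{I}$, adding any non-interaction component to a candidate $f$ only increases $\|\tilde Y-f\|_2^2$ without changing ${\rm TV}(f)$, forcing the minimizer into $\mathcal{I}$.

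Next, I would parametrize $f\in\mathcal{I}$ using Lemma \ref{expansion.lemma}. The $(n_1-1)(n_2-1)$ atoms $\{\tilde\psi^{j,k}\}_{(j,k)\in[2:n_1]\times[2:n_2]}$ span $\mathcal{I}$ and, by a dimension count, form a basis; moreover Lemma \ref{expansion.lemma} identifies the synthesis coefficients of any $f\in\mathcal{I}$ as $\beta_{j,k}=(\Delta f)_{j,k}$. Consequently ${\rm TV}(f)=\|\Delta f\|_1=\sum_{(j,k)\in[2:n_1]\times[2:n_2]}|\beta_{j,k}|$, so the analysis penalty becomes a plain $\ell_1$ penalty on the synthesis coefficients.

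Finally, I would exchange $\tilde Y$ for $Y$ in the data-fitting term via a Pythagorean decomposition. Since any $f\in\mathcal{I}$ is orthogonal to the three remaining ANOVA subspaces,
$$\|Y-f\|_2^2=\|Y(\circ,\circ)\psi^{1,1}\|_2^2+\|Y(\cdot,\circ)\|_2^2+\|Y(\circ,\cdot)\|_2^2+\|\tilde Y-f\|_2^2,$$
and the first three terms do not depend on $\beta$. Combining the three steps yields the stated synthesis form with coefficients $\hat{\tilde\beta}$ defined by the Lasso problem in the lemma. The argument is essentially bookkeeping; the one point that needs care is verifying that $\Delta$ annihilates every non-interaction ANOVA component, which is immediate from the product structure $\Delta f=D_1 f D_2^T$ together with the defining constant-row and constant-column properties of the main effects.
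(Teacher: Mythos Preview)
Your proposal is correct and follows essentially the same route as the paper. Both arguments hinge on the orthogonality of the ANOVA components together with Lemma \ref{expansion.lemma}: the paper compresses the orthogonality step into the single identity $\|Y-f\|_2^2=\|Y-(f-\tilde f)\|_2^2+\|Y-\tilde f\|_2^2-\|Y\|_2^2$, whereas you spell out the three consequences separately (that $\hat{\tilde f}\in\mathcal{I}$, that ${\rm TV}(f)=\sum|\beta_{j,k}|$ on $\mathcal{I}$, and that $\tilde Y$ may be replaced by $Y$ via Pythagoras).
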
 
\begin{proof}
See Appendix \ref{proof.separate-interactions.lemma}.
\end{proof}

Note that $\sum_{j=2}^{n_1} \sum_{k=2}^{n_2}  | \hat{\tilde{\beta}}_{j,k} | = {\rm TV} (\hat{\tilde f} )= {\rm TV} (\hat{ f} )$.

\section{Oracle inequalities}\label{TV2D-JMLR-Section6}

In this section we expose standard techniques used to derive oracle inequalities with fast and slow rates. We closely follow \cite{hutt16, dala17,vand19, orte19-2}.
This section can therefore  be viewed as a preparatory step, which frames the work that has to be done in order to  establish  adaptivity as well as the rate $n^{-5/8}\log^{3/8}n$ for the estimator of the interaction terms.

Indeed, we do not yet exploit the specific properties of the two-dimensional total derivative operator $\Delta$. These properties will be further explored in Sections \ref{TV2D-JMLR-Section7} and \ref{TV2D-JMLR-Section8}. The current section can be seen as the background knowledge already present in the literature. It is complemented by our results in Sections \ref{TV2D-JMLR-Section7} and \ref{TV2D-JMLR-Section8}, which are new and specific for total variation image denoising.

For simplicity, in this section we look at matrices as if they were vectors by concatenating their entries by columns. We define the dictionary
$$ \tilde{\Psi}:= \{{\tilde \psi}^{j,k} \}_{(j,k)\in \{2, \ldots, n_1\}\times \{2, \ldots, n_2\}} \in \R^{n_1n_2 \times (n_1-1)(n_2-1)}$$
and the two-dimensional discrete derivative operator
$$ \Delta:= \begin{pmatrix*} D_1 \otimes D_2  \end{pmatrix*} \in \{-1,0,+1\}^{(n_1-1)(n_2-1)\times n_1 n_2},$$
where $\otimes$ denotes the Kronecker product.
Note that the dictionary $\tilde{\Psi}$ is the remainder of the projection of the last $n_1 n_2 -n_1 - n_2 +1$ columns of the dictionary
$$ {\Psi}:= \{{ \psi}^{j,k} \}_{(j,k)\in [n_1]\times [n_2]} \in \R^{n_1n_2 \times n_1n_2}$$
onto its first $n_1+n_2-1$ columns. 

Recall the estimator
$$ \hat{\tilde f}= \arg \min_{{\rm f} \in \R^{n}} \left\{ \norm{{\tilde Y} -{\rm f}}^2_2/n + 2 \lambda \norm{\Delta {\rm f}}_1 \right\}, \lambda >0.$$

To guarantee favorable error bounds the tuning parameter $\lambda$ has to be chosen carefully. It has to be chosen large enough to overrule the noise, but not too large.

A choice of the tuning parameter $\lambda$ that guarantees that all the noise is overruled is the ``universal choice''
 $$ \lambda_0 (t) :=\sigma \sqrt { 2 \log (2n) +2 t \over n }, t >0 . $$
This choice results from the assumption that all noise has to be overruled by the penalty: the the structure encoded in the analysis operator $\Delta$ and in the active set $S$ is not considered.
The projection arguments by \cite{dala17} exposed in Lemma \ref{2d-appB-l02} in Subsection \ref{TV2D-JMLR-Section6.1} take into account the structure encoded in $\Delta$ and show that only the part of the noise not being correlated with the candidate structure of the estimator  needs to be overruled by the tuning parameter $\lambda$. Thus, more favorable error bounds can be obtained with a choice of $\lambda$ smaller than the universal choice $\lambda_0(t)$. How much $\lambda_0(t)$ has to be downscaled depends then on the correlation in the structure encoded in $\Delta$ and $S$.

The universal choice $\lambda_0(t)$ can therefore be seen as a worst-case choice, which always overrules the noise. It always does its job, but not always the best job.


The following inequality is the starting point for the proof of  oracle inequalities with both fast and slow rates and can be found for instance in \cite{vand16,hutt16,orte18,vand19,orte19-2}.

\begin{lemma}[Basic inequality]\label{2d-appB-l01}
For all $g\in \R^n$ we have that
\begin{equation*}
\norm{\hat{\tilde f}- {\tilde f^0}}^2_2/n + \norm{\hat{\tilde f}- g}^2_2/n \le \norm{g- {\tilde f^0}}^2_2/n +2 \frac{{\tilde \epsilon}^{T}(\hat{\tilde f}-g)}{n}+ 2 \lambda (\norm{\Delta g}_1- \norm{\Delta \hat{\tilde f}}_1).
\end{equation*}
\end{lemma}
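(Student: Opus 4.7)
The plan is to derive the basic inequality from a first-order (subgradient/KKT) optimality condition combined with the standard polarization identity $2a^T b = \|a\|_2^2 + \|b\|_2^2 - \|a-b\|_2^2$. The key observation is that the extra quadratic term $\norm{\hat{\tilde f}-g}_2^2/n$ on the left-hand side comes for free from polarization once one has the sharp one-sided first-order inequality rather than the looser "plug-in" bound that one gets by directly comparing the objective at $\hat{\tilde f}$ and at $g$.

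First I would extract a KKT-type inequality from the minimization defining $\hat{\tilde f}$. Fix an arbitrary $g\in\R^n$ and consider the perturbation ${\rm f}_t:=\hat{\tilde f}+t(g-\hat{\tilde f})$ for $t\in(0,1]$. By convexity of $\norm{\cdot}_1$,
$$\norm{\Delta {\rm f}_t}_1 \le (1-t)\norm{\Delta \hat{\tilde f}}_1+t\norm{\Delta g}_1,$$
while a direct expansion of $\norm{\tilde Y-{\rm f}_t}_2^2$ gives
$$\norm{\tilde Y-{\rm f}_t}_2^2/n = \norm{\tilde Y-\hat{\tilde f}}_2^2/n - 2t(\tilde Y-\hat{\tilde f})^T(g-\hat{\tilde f})/n + t^2 \norm{g-\hat{\tilde f}}_2^2/n.$$
Since $\hat{\tilde f}$ is a minimizer, $F({\rm f}_t)\ge F(\hat{\tilde f})$; plugging both displays in, dividing by $t>0$, and sending $t\downarrow 0$ yields
$$2(\tilde Y-\hat{\tilde f})^T(g-\hat{\tilde f})/n \le 2\lambda\bigl(\norm{\Delta g}_1-\norm{\Delta\hat{\tilde f}}_1\bigr).$$

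Next I would decompose $\tilde Y-\hat{\tilde f}=(\tilde f^0-\hat{\tilde f})+\tilde\epsilon$, which rewrites the previous inequality as
$$2(\tilde f^0-\hat{\tilde f})^T(g-\hat{\tilde f})/n \le 2\tilde\epsilon^T(\hat{\tilde f}-g)/n + 2\lambda\bigl(\norm{\Delta g}_1-\norm{\Delta\hat{\tilde f}}_1\bigr).$$
Applying the polarization identity with $a=\hat{\tilde f}-\tilde f^0$ and $b=\hat{\tilde f}-g$ gives
$$2(\hat{\tilde f}-\tilde f^0)^T(\hat{\tilde f}-g) = \norm{\hat{\tilde f}-\tilde f^0}_2^2 + \norm{\hat{\tilde f}-g}_2^2 - \norm{g-\tilde f^0}_2^2,$$
and substituting this into the left-hand side above (after dividing by $n$) and rearranging yields exactly the claimed basic inequality.

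The only delicate step is Step 1, extracting the sharp one-sided first-order inequality from the non-smooth penalized optimization (if one naively compares $F(\hat{\tilde f})\le F(g)$ and expands, one recovers only the weaker inequality without the $\norm{\hat{\tilde f}-g}_2^2/n$ term on the left). Using the convex-combination perturbation resolves this because the squared-loss contribution scales as $t^2$ and therefore drops out in the limit, while the $\ell_1$ term contributes only through its one-sided directional bound given by convexity. Everything else is algebra, and no probabilistic input is needed at this stage, which is consistent with this lemma being a deterministic starting point for both the fast- and slow-rate analyses.
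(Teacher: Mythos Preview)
Your proof is correct and follows the same overall structure as the paper: extract the first-order optimality inequality $2(\tilde Y-\hat{\tilde f})^T(g-\hat{\tilde f})/n \le 2\lambda(\norm{\Delta g}_1-\norm{\Delta\hat{\tilde f}}_1)$, insert $\tilde Y=\tilde f^0+\tilde\epsilon$, and apply polarization. The only difference is in how that first inequality is obtained. The paper writes the KKT condition $(\tilde Y-\hat{\tilde f})/n=\lambda\Delta^T\partial\norm{\Delta\hat{\tilde f}}_1$ explicitly, multiplies by $\hat{\tilde f}$ (giving equality $=\lambda\norm{\Delta\hat{\tilde f}}_1$) and by $g$ (giving $\le\lambda\norm{\Delta g}_1$ via the dual-norm inequality), and subtracts. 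You instead use the convex-perturbation argument ${\rm f}_t=\hat{\tilde f}+t(g-\hat{\tilde f})$ and let $t\downarrow 0$. Both routes are standard and equivalent; yours avoids invoking subdifferential calculus and the chain rule for the subgradient, at the cost of the limiting step, while the paper's route makes the role of the subgradient and the dual norm more explicit.
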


\begin{proof}
See Appendix \ref{proof.basic.inequ}.
\end{proof}

\subsection{Bounding the increments of the empirical process}\label{TV2D-JMLR-Section6.1}

Let $S \subseteq [(n_1-1)(n_2-1)]$. Let $\tilde{\Psi}_{i}$ denote the $i^{\rm th}$ column of $\tilde \Psi$. We write $\tilde{\Psi}_{S}:= \{{\tilde \Psi}_i\}_{i \in S}$ and $\tilde{\Psi}_{-S}:= \{{\tilde \Psi}_i\}_{i \not\in S}$. Denote by ${\rm P}_{S}:= {\tilde \Psi}_{S} ({\tilde \Psi}_{S}^{T} {\tilde \Psi}_{S})^{-1} {\tilde \Psi}_{S}^{T}$ the orthogonal projection matrix onto the column span of $\tilde{\Psi}_{S}$ and by ${\rm A}_{S}:= {\rm I}_n-{\rm P}_{S}$ the corresponding antiprojection matrix.

Empirical processes and their relevance for statistics are discussed for instance in \cite{vand07b,vand09a}.
In this subsection we are going to expose a high-probability upper bound for the increments of the empirical process by projection arguments proposed by \cite{dala17}.

The increments of the empirical process we study are given by
$$\left\{\frac{{\tilde \epsilon}^{T} f}{n}: f \in \R^n\right\}= \left\{\frac{{\tilde \epsilon}^{T} {\tilde f}}{n}: f \in \R^n\right\}= \left\{\frac{{ \epsilon}^{T} {\tilde f}}{n}: f \in \R^n\right\},$$
where the equality holds because of the idempotence of projection matrices. The basis of the techinque to bound the increments of the empirical process by \cite{dala17} is to decompose them into a part projected onto a low-rank linear space and a remainder, the so-called antiprojection:
\begin{equation}\label{Eq.decompose} \frac{{\tilde \epsilon}^{T} {\tilde f}}{n} = \frac{{\tilde \epsilon}^{T} {\rm P}_{S} {\tilde f}}{n} + \frac{{\tilde \epsilon}^{T} {\rm A}_{S}{\tilde f}}{n} .\end{equation}

We now define the bound on the antiprojections, the inverse scaling factor and the noise weights, which are needed to control the increments of the empirical process by projection arguments.

\begin{definition}[Bound on the antiprojections]\label{def.bound.antiproj}
A bound on the antiprojections $\tilde{v} \in \R^{(n_1-1)(n_2-1)}$ is a vector (or matrix), such that

$${\tilde v}_i \ge \norm{({\rm I}_n-{\rm P}_{S}) {\tilde \Psi}_i}_2/ \sqrt{n}, \ \forall i \in [(n_1-1)(n_2-1)].$$
\end{definition}

Based on the bound on the antiprojections $\tilde v$ we define the inverse scaling factor and the noise weights, which will be important in determining the choice of the tuning parameter $\lambda$ and the bound on the effective sparsity, respectively.

\begin{definition}[Inverse scaling factor]
Let $\tilde v$ be a bound on the antiprojections. The inverse scaling factor $\tilde{\gamma} \in \R$ is defined as
$\tilde{\gamma}:= \norm{\tilde{v}_{-S}}_{\infty}$.
\end{definition}

The inverse scaling factor ${\tilde \gamma}$ depends on the analysis operator $\Delta$ via dictionary ${\tilde \Psi}$ and on the active set $S$. 

\begin{definition}[Noise weights]
Let $\tilde v$ be a bound on the antiprojections and $\tilde \gamma$ the corresponding inverse scaling factor. The noise weights $v\in \R^{(n_1-1)(n_2-1)}$ are defined as
 $v:= {\tilde v}/ {\tilde \gamma} \in [0,1]^{(n_1-1)(n_2-1)}$.
\end{definition}

The following lemma is inspired by the proof of Theorem 1 in \cite{dala17} and can be found in a more general form as Lemma A.2 in \cite{vand19}.

\begin{lemma}[Control the increments of the empirical process with projections]\label{2d-appB-l02}
For $x,t>0$ choose 
$$\lambda \ge \tilde{\gamma} \lambda_0(t).$$
Then, $\forall f \in \R^{n_1n_2}$, with probability at least $1-e^{-x}-e^{-t}$ it holds that
$$ \frac{{\tilde \epsilon}^{T} f}{n} \le \frac{\norm{f}_2}{\sqrt{n}}\left(\sigma\sqrt{\frac{2x}{n}} + \sigma \sqrt{\frac{s}{n}} \right) + \lambda \norm{{v}_{-S} \odot (\Delta f)_{-S}}_1.$$
\end{lemma}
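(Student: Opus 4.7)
The plan is to carry out the decomposition already announced in Equation \eqref{Eq.decompose} and bound each of the two pieces by a separate concentration argument. First I would observe that $\tilde\epsilon$ is the projection of $\epsilon$ onto the interaction subspace, which is exactly the column span of $\tilde\Psi$; hence $\tilde\epsilon^{T} f = \tilde\epsilon^{T}\tilde f$ and, writing $\tilde f = \tilde\Psi\beta$ via Lemma \ref{expansion.lemma}, the coefficients $\beta$ restricted to $[2{:}n_1]\times[2{:}n_2]$ coincide with the entries of $\Delta f$. This reduces the statement to bounding
$$\tilde\epsilon^{T}\tilde f/n = \tilde\epsilon^{T}{\rm P}_{S}\tilde f/n + \tilde\epsilon^{T}{\rm A}_{S}\tilde f/n.$$

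For the projected part I would apply Cauchy--Schwarz to obtain $\tilde\epsilon^{T}{\rm P}_{S}\tilde f \le \|{\rm P}_{S}\tilde\epsilon\|_2 \|\tilde f\|_2 \le \|{\rm P}_{S}\tilde\epsilon\|_2 \|f\|_2$, the last step using that $\tilde f$ is the orthogonal projection of $f$ onto the interaction subspace. Because $\tilde\Psi_{S}$ lies in the interaction subspace, $\|{\rm P}_{S}\tilde\epsilon\|_2 = \|{\rm P}_{S}\epsilon\|_2$, which is $\sigma$ times the norm of a standard Gaussian in a space of dimension at most $s$. The Borell--Sudakov--Tsirelson inequality applied to the $1$-Lipschitz map $\epsilon\mapsto\|{\rm P}_{S}\epsilon\|_2/\sigma$, together with $\mathbb{E}\|{\rm P}_{S}\epsilon\|_2 \le \sigma\sqrt{s}$, then gives $\|{\rm P}_{S}\tilde\epsilon\|_2 \le \sigma\sqrt{s} + \sigma\sqrt{2x}$ with probability at least $1-e^{-x}$. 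Dividing by $n$ and rearranging recovers the $\|f\|_2/\sqrt{n}\,(\sigma\sqrt{s/n}+\sigma\sqrt{2x/n})$ term.

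For the antiprojection part I would use ${\rm A}_{S}\tilde\Psi_i = 0$ for $i\in S$ to write
$$\tilde\epsilon^{T}{\rm A}_{S}\tilde f/n = \sum_{i\notin S}\beta_i\,\tilde\epsilon^{T}{\rm A}_{S}\tilde\Psi_i/n.$$
Each $\tilde\epsilon^{T}{\rm A}_{S}\tilde\Psi_i/n$ is a mean-zero Gaussian of variance $\sigma^{2}\|{\rm A}_{S}\tilde\Psi_i\|_2^{2}/n^{2} \le \sigma^{2}\tilde v_i^{2}/n$ by Definition \ref{def.bound.antiproj}. Normalizing by $\tilde v_i$ and taking the maximum over the at most $(n_1-1)(n_2-1) \le n$ indices outside $S$, the standard Gaussian maximal inequality yields, with probability at least $1-e^{-t}$,
$$\max_{i\notin S}\,|\tilde\epsilon^{T}{\rm A}_{S}\tilde\Psi_i|/(n\tilde v_i) \le \sigma\sqrt{(2\log(2n)+2t)/n} = \lambda_0(t).$$
Consequently $\tilde\epsilon^{T}{\rm A}_{S}\tilde f/n \le \lambda_0(t)\sum_{i\notin S}|\beta_i|\tilde v_i = \tilde\gamma\lambda_0(t)\|v_{-S}\odot\beta_{-S}\|_1$, which is at most $\lambda\|v_{-S}\odot(\Delta f)_{-S}\|_1$ by the hypothesis $\lambda\ge\tilde\gamma\lambda_0(t)$ and the identification of $\beta_{-S}$ with $(\Delta f)_{-S}$. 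A union bound over the two high-probability events completes the proof.

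The main obstacle is cosmetic rather than deep: extracting the clean form $\sigma\sqrt{s}+\sigma\sqrt{2x}$ (rather than the looser $\sigma\sqrt{s+2x}$) from Gaussian concentration, and making sure the upper bound $(n_1-1)(n_2-1)\le n$ is enough to replace $\log(2(n_1-1)(n_2-1))$ by $\log(2n)$ inside the definition of $\lambda_0(t)$. Both points are standard but need to be spelled out so that the inverse scaling factor $\tilde\gamma$ is the only object absorbing the structure of the dictionary $\tilde\Psi$ relative to the active set $S$.
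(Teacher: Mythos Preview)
Your proposal is correct and follows essentially the same route as the paper: the same projection/antiprojection split, Cauchy--Schwarz plus Gaussian concentration on $\|{\rm P}_S\epsilon\|_2$ for the first piece, and a Gaussian maximal inequality over the normalized antiprojected atoms for the second. The only cosmetic difference is that the paper cites Laurent--Massart (or van de Geer, Lemma 8.6) for the $\sigma\sqrt{s}+\sigma\sqrt{2x}$ bound and Lemma 17.5 of \cite{vand16} for the maximum over $i\notin S$, whereas you invoke Borell--Sudakov--Tsirelson and the standard union bound directly; both yield the same constants, and your explicit remark that ${\rm P}_S\tilde\epsilon={\rm P}_S\epsilon$ (since the span of $\tilde\Psi_S$ lies in the interaction subspace) makes transparent a step the paper leaves implicit.
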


\begin{proof}
See Appendix \ref{bound.ep.proof}.
\end{proof}

Lemma \ref{2d-appB-l02} can be interpreted as a bound on the increments of the empirical process tailored to the structure of the estimation problem. Indeed, the linear space onto which the noise is projected is chosen depending on the analysis operator $\Delta$ and on the candidate active set $S$. As a consequence of the projection arguments used in its proof, one can choose the tuning parameter smaller than the universal choice $ \lambda_0(t)$ by a factor ${\tilde \gamma}$, which depends on the structure encoded in $\Delta$ (or ${\tilde \Psi}$) and $S$. The universal choice of the tuning parameter is retrieved by choosing $S=\emptyset$, which is equivalent to neglecting all structure.

\subsection{Fast rates}\label{TV2D-JMLR-Section6.2}

Oracle inequalities with fast rates are characterized by the presence of the so-called effective sparsity in the upper bound. To define the effective sparsity we need the notion of sign configurations. Indeed, we will apply the definition of effective sparsity to an image, the signs of whose jumps we do not know. Therefore we look for a bound on the effective sparsity holding for all sign configurations.

\begin{definition}[Sign configuration]
Let $q \in [-1,1]^{(n_1-1)(n_2-1)}$ be s.t.
$$q_{i}\in\begin{cases} \{-1,+1\},&  i\in S, \\  [-1,1], & i\notin S.\end{cases}$$
We call  $q_S \in \{-1,0,1\}^{(n_1-1)(n_2-1)}$ a sign configuration.
\end{definition}

We now define the effective sparsity as in \cite{vand19}. This definition will be reformulated in matrix form in Definition \ref{effective.sparsity.matrix}  in Section \ref{TV2D-JMLR-Section7}.

\begin{definition}[Effective sparsity]\label{effective.sparsity}
 Let $S $ be an active set, $q_S \in \{-1,0,1\}^{(n_1-1) (n_2-1)}$ be a sign configuration and $v \in [0,1]^{(n_1-1) (n_2-1)}$ be noise weights.
The effective sparsity $\Gamma ( S, {v}_{-S},q_S )\in \R$ is defined as
$$\Gamma ( S, {v}_{-S},q_S ) = \max  \{   q_S^{T}  (\Delta f)_S  - \| (1- {v})_{-S}  \odot ( \Delta f)_{-S} \|_1  :\| f \|_2^2 / n=1 \}.$$
Moreover we write 
$$ \Gamma ( S, {v}_{-S}):= \max_{q_S}\ \Gamma ( S, {v}_{-S},q_S ).$$
\end{definition}

The definition of effective sparsity consists of two parts: a first term representing approximately the number of jumps -- the sparsity -- and a second term which is a discount due to the correlation of the non-active dictionary atoms with the active ones. Hence the name ``effective sparsity''. The larger this correlation, the larger the discount for the effective sparsity.

An oracle inequality with fast rate is shown in the following theorem, which corresponds to Theorem 2.1 in \cite{orte19-2} and to the adaptive bound of Theorem 2.2 in \cite{vand19}.

\begin{theorem}[Oracle inequality with fast rates]\label{2d-appB-thm3}
Let $g \in \R^n$ and $S \subseteq [(n_1-1)(n_2-1)]$ be arbitrary.
For $x,t>0$, choose  $\lambda \ge {\tilde \gamma} \lambda_0(t)$.
Then, with probability at least $1-e^{-x}-e^{-t}$, it holds that 
\begin{equation*}
\norm{\hat{\tilde f}-{\tilde f^0}}^2_2 /n \le  \norm{g-{\tilde f^0}}^2_2 /n + 4 \lambda \norm{(\Delta g)_{-S}}_1 + \left( \sigma\sqrt{\frac{2x}{n}} +\sigma \sqrt{\frac{s}{n}} + \lambda \Gamma(S, v_{-S},q_S) \right)^2,
\end{equation*}
where $q_S= \text{sign}((\Delta g)_{S})$.
\end{theorem}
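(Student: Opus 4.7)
The plan is to combine the three preparatory tools already developed (the basic inequality in Lemma~\ref{2d-appB-l01}, the projection bound on the empirical process in Lemma~\ref{2d-appB-l02}, and the definition of effective sparsity) in the standard order, taking care of signs. Let $h := \hat{\tilde f} - g$; the conclusion is an upper bound on $\|\hat{\tilde f}-\tilde f^0\|_2^2/n$, so every intermediate estimate is obtained by bounding from above.

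First I would write down Lemma~\ref{2d-appB-l01} for the given $g$ and apply Lemma~\ref{2d-appB-l02} to $h$, which is legal because $\lambda\ge\tilde\gamma\lambda_0(t)$; this yields, on the good event of probability at least $1-e^{-x}-e^{-t}$,
\[
2\,\tilde\epsilon^{T} h/n \le 2\tfrac{\|h\|_2}{\sqrt n}\bigl(\sigma\sqrt{2x/n}+\sigma\sqrt{s/n}\bigr) + 2\lambda\|v_{-S}\odot(\Delta h)_{-S}\|_1.
\]
Next I would handle the penalty difference by splitting $\Delta$ according to $S$ and $-S$. On the active part, using $q_S=\operatorname{sign}((\Delta g)_S)$ and $\|(\Delta\hat{\tilde f})_S\|_1\ge q_S^{T}(\Delta\hat{\tilde f})_S$, one gets $\|(\Delta g)_S\|_1-\|(\Delta\hat{\tilde f})_S\|_1\le -q_S^{T}(\Delta h)_S$. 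On the inactive part, the reverse triangle inequality gives $\|(\Delta\hat{\tilde f})_{-S}\|_1\ge \|(\Delta h)_{-S}\|_1-\|(\Delta g)_{-S}\|_1$, so that altogether
\[
2\lambda\bigl(\|\Delta g\|_1-\|\Delta\hat{\tilde f}\|_1\bigr)\le -2\lambda q_S^{T}(\Delta h)_S + 4\lambda\|(\Delta g)_{-S}\|_1 - 2\lambda\|(\Delta h)_{-S}\|_1.
\]

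Then I would combine the two penalty-type $-S$ terms. Because $v\in[0,1]^{(n_1-1)(n_2-1)}$, the identity $\|(\Delta h)_{-S}\|_1-\|v_{-S}\odot(\Delta h)_{-S}\|_1 = \|(1-v)_{-S}\odot(\Delta h)_{-S}\|_1$ collapses them into $-2\lambda\|(1-v)_{-S}\odot(\Delta h)_{-S}\|_1$, leaving the $S$-part $-2\lambda q_S^{T}(\Delta h)_S$ to be controlled. Here comes the central trick: apply Definition~\ref{effective.sparsity} to $-h$ (note the sign flip), using the sign configuration $q_S$; since $\|{-h}\|_2=\|h\|_2$,
\[
-q_S^{T}(\Delta h)_S - \|(1-v)_{-S}\odot(\Delta h)_{-S}\|_1 \le \Gamma(S,v_{-S},q_S)\,\|h\|_2/\sqrt n.
\]
Multiplying by $2\lambda$ and inserting into the chain of inequalities gives
\[
\|\hat{\tilde f}-\tilde f^0\|_2^2/n + \|h\|_2^2/n \le \|g-\tilde f^0\|_2^2/n + 4\lambda\|(\Delta g)_{-S}\|_1 + 2\,\tfrac{\|h\|_2}{\sqrt n}\,b,
\]
with $b := \sigma\sqrt{2x/n}+\sigma\sqrt{s/n}+\lambda\Gamma(S,v_{-S},q_S)$.

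Finally I would close the argument by the elementary inequality $2ab\le a^2+b^2$ with $a=\|h\|_2/\sqrt n$: this shows $2ab - a^2 \le b^2$, so $\|h\|_2^2/n$ on the left absorbs the cross term and one is left with $+b^2$ on the right, which is exactly the squared bracket in the statement. The only place where one has to be genuinely careful is the sign flip used to invoke the effective-sparsity bound: the basic inequality produces $-q_S^{T}(\Delta h)_S$ rather than $+q_S^{T}(\Delta h)_S$, and one must either apply Definition~\ref{effective.sparsity} to $-h$ as above or, equivalently, note that $\Gamma(S,v_{-S},q_S)=\Gamma(S,v_{-S},-q_S)$ by symmetry of the maximization. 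Aside from that bookkeeping, every step is a routine manipulation of the three tools already in hand.
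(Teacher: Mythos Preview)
Your proposal is correct and follows essentially the same route as the paper: basic inequality (Lemma~\ref{2d-appB-l01}), empirical-process bound (Lemma~\ref{2d-appB-l02}), the triangle-inequality decomposition of the penalty on $S$ and $-S$, the definition of effective sparsity, and finally $2ab\le a^2+b^2$ (what the paper calls the convex conjugate inequality). The only cosmetic difference is that the paper phrases everything in terms of $g-\hat{\tilde f}$ from the outset, which makes the effective-sparsity step apply directly without the sign flip you (correctly) handle by evaluating the definition at $-h$.
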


\begin{proof}
See Appendix \ref{fast.rate.proof}.
\end{proof}

The fast rate of Theorem \ref{2d-appB-thm3} is given by $\lambda^2\Gamma^2(S,{v}_{-S})\asymp {\log(n){\tilde \gamma}^2\Gamma^2(S,{v}_{-S})}/{n}$. Typically, we expect $\Gamma^2(S,{v}_{-S})$ to scale approximately as $\Gamma^2(S,{v}_{-S})\asymp s/{\tilde \gamma}^{2}$.  We will prove in Lemma \ref{Gamma.lemma} in Section \ref{TV2D-JMLR-Section7} that for image denoising with total variation regularization the effective sparsity scales  as $\Gamma^2(S,{v}_{-S})\asymp s^{3/2} \log(n)/{\tilde \gamma}^{2} $. We have an extra factor $s^{1/2}$ due to the two-dimensional nature of the problem and a log factor due to the noise.

Using Lemma \ref{2d-appB-l02} to bound the increments of the empirical process has two effects:
on the one side we can choose a tuning parameter $\lambda= {\tilde \gamma}\lambda_0(t)$ smaller than the universal choice $\lambda_0(t)$. Thus, the rate that would be obtained with $\lambda= \lambda_0(t)$ can be obtained with $\lambda= {\tilde \gamma}\lambda_0(t)$ and a bound on the effective sparsity larger by a factor $1/{\tilde \gamma}^2$. On the other side, the effective sparsity is increased by an additive $\norm{{v}_{-S}  \odot ( \Delta f)_{-S}}_1$.

To prove adaptivity,  we need to find an appropriate bound on the antiprojections $\tilde v$, the corresponding scaling factor $\tilde \gamma$, the noise weights $v$ and finally prove a bound on the effective sparsity $\Gamma( S, {v}_{-S},q_S )$ holding for all sign confgurations $q_S$. This will be the topic of Section \ref{TV2D-JMLR-Section7}.

\subsection{Slow rates}

The next theorem corresponds to  Theorem 2.2 in \cite{orte19-2} and to the non-adaptive bound of Theorem 2.2 in \cite{vand19}.
\begin{theorem}[Oracle inequality with slow rates]\label{2d-appB-thm4}
Let $g \in \R^n$ and $S \subseteq [(n_1-1)(n_2-1)]$ be arbitrary.
For $x,t>0$, choose $\lambda \ge {\tilde \gamma} \lambda_0(t)$.
Then, with probability at least $1-e^{-x}-e^{-t}$, it holds that 
$$
\norm{\hat{\tilde f}-{\tilde f^0}}^2_2 /n \le \norm{g-{\tilde f^0}}^2_2 /n + 4 \lambda \norm{\Delta g}_1 + \left( \sigma \sqrt{\frac{2x}{n}} + \sigma \sqrt{\frac{s}{n}}  \right)^2.$$
\end{theorem}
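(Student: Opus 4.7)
The plan is to combine the basic inequality (Lemma \ref{2d-appB-l01}) with the empirical process bound (Lemma \ref{2d-appB-l02}), using only crude triangle inequalities in place of the effective sparsity machinery. In contrast to Theorem \ref{2d-appB-thm3}, where the noise weights $v_{-S}$ give a genuine discount that feeds into $\Gamma(S, v_{-S}, q_S)$, here I would simply throw away that refinement via $v_{-S} \in [0,1]$ and a triangle inequality on $\Delta(\hat{\tilde f} - g)$, which is precisely why the rate is slow and why the upper bound depends on $\norm{\Delta g}_1$ (rather than only $\norm{(\Delta g)_{-S}}_1$).

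First I would write the basic inequality for the arbitrary $g$, obtaining
\begin{equation*}
\norm{\hat{\tilde f} - \tilde{f}^0}_2^2/n + \norm{\hat{\tilde f} - g}_2^2/n \le \norm{g - \tilde{f}^0}_2^2/n + 2 \frac{\tilde{\epsilon}^T(\hat{\tilde f} - g)}{n} + 2\lambda(\norm{\Delta g}_1 - \norm{\Delta \hat{\tilde f}}_1).
\end{equation*}
Next, since $\lambda \ge \tilde{\gamma}\lambda_0(t)$, I apply Lemma \ref{2d-appB-l02} to the specific choice $f = \hat{\tilde f} - g$, getting, with probability at least $1 - e^{-x} - e^{-t}$,
\begin{equation*}
\frac{\tilde{\epsilon}^T(\hat{\tilde f} - g)}{n} \le \frac{\norm{\hat{\tilde f} - g}_2}{\sqrt{n}}\left(\sigma\sqrt{\frac{2x}{n}} + \sigma\sqrt{\frac{s}{n}}\right) + \lambda \norm{v_{-S} \odot (\Delta(\hat{\tilde f} - g))_{-S}}_1.
\end{equation*}

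Then I would bound the last $\ell_1$ term very loosely: since the noise weights satisfy $v_{-S} \in [0,1]^{(n_1-1)(n_2-1)-s}$,
\begin{equation*}
\norm{v_{-S} \odot (\Delta(\hat{\tilde f} - g))_{-S}}_1 \le \norm{(\Delta(\hat{\tilde f} - g))_{-S}}_1 \le \norm{\Delta(\hat{\tilde f} - g)}_1 \le \norm{\Delta \hat{\tilde f}}_1 + \norm{\Delta g}_1.
\end{equation*}
Substituting this back into the basic inequality, the $2\lambda\norm{\Delta \hat{\tilde f}}_1$ contribution exactly cancels the penalty term $-2\lambda\norm{\Delta \hat{\tilde f}}_1$, leaving a clean $4\lambda \norm{\Delta g}_1$ on the right-hand side — this cancellation is the whole point and it is why the estimator's penalty $\norm{\Delta \hat{\tilde f}}_1$ need never be controlled.

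Finally, I would apply $2ab \le a^2 + b^2$ to the linear-in-$\norm{\hat{\tilde f} - g}_2$ noise term so that
\begin{equation*}
2\frac{\norm{\hat{\tilde f} - g}_2}{\sqrt{n}}\left(\sigma\sqrt{\frac{2x}{n}} + \sigma\sqrt{\frac{s}{n}}\right) \le \frac{\norm{\hat{\tilde f} - g}_2^2}{n} + \left(\sigma\sqrt{\frac{2x}{n}} + \sigma\sqrt{\frac{s}{n}}\right)^2,
\end{equation*}
after which the $\norm{\hat{\tilde f} - g}_2^2/n$ term cancels with its twin on the left of the basic inequality, yielding the desired bound. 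There is no real obstacle here: the argument is routine bookkeeping once one realizes that the slow-rate version is obtained by deliberately discarding the sparsity-exploiting refinement of Lemma \ref{2d-appB-l02}. The only thing to be careful about is that the high-probability event on which Lemma \ref{2d-appB-l02} holds is used once, so the final probability is the same $1 - e^{-x} - e^{-t}$.
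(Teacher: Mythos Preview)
Your proof is correct and is essentially the paper's own argument, written out in more detail. The paper's proof combines Lemma \ref{2d-appB-l01} with Lemma \ref{2d-appB-l02} applied to $f=\hat{\tilde f}-g$, uses the same crude bound $\norm{v_{-S}\odot(\Delta(\hat{\tilde f}-g))_{-S}}_1\le \norm{\Delta\hat{\tilde f}}_1+\norm{\Delta g}_1$, and then applies $2ab\le a^2+b^2$ to absorb $\norm{\hat{\tilde f}-g}_2^2/n$, yielding the same cancellations you describe.
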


\begin{proof}
See Appendix \ref{slow.rate.proof}.
\end{proof}

To obtain the rate $n^{-5/8}\log^{3/8}n $, we need to choose $S$ in a way that optimally trades off the term $s/n$ and the term $\tilde \gamma \lambda_0(t) \norm{\Delta g}_1$. This will be the topic of Section \ref{TV2D-JMLR-Section8}.
\section{Adaptive rates for image denoising}\label{TV2D-JMLR-Section7}

Our objective for this section is to establish that $\hat f$ can adapt to the
number of jumps in the main effects and the interaction terms. The main effects can be
dealt with by using the results for the one-dimensional total variation regularized estimator (see \cite{dala17,gunt20,vand19}).
Thus, our main result will be to show that the estimator $\hat{\tilde f}$  of the interaction terms  is adaptive in that it can adapt to the underlying true interaction terms ${\tilde f}^0$. We will prove an upper bound on the mean squared error of $\hat{\tilde f}$ which can be different for different values of ${\tilde f}^0$. In practice ${\tilde f}^0$ is unknown. However adaptivity guarantees that the estimator can ``sense'' different structures in the underlying ${\tilde f}^0$ and adapt to them.

To prove adaptivity, we need to establish a bound for the so-called effective sparsity (Definition \ref{effective.sparsity.matrix}),
which in turn can be derived by using interpolating matrices (see Lemma \ref{interpolating.lemma}). This way of bounding the effective sparsity is an extension to the two-dimensional case of the bound on the effective sparsity for one-dimensional total variation regularized estimators based on interpolating vectors exposed in \cite{vand19}.
The combination of the new bound on the effective sparsity (see Lemma \ref{Gamma.lemma}) with the standard Theorem \ref{2d-appB-thm3} will lead to our main result.

The roadmap for this section is the following: in Subsection \ref{main.result.section} we will state our main result. The result follows by combining the general oracle inequality for analysis estimators given in Theorem \ref{2d-appB-thm3} with the results of Subsections \ref{interpolating.section}-\ref{noise.section} and will be proved in the conclusive Subsection \ref{proof.main.thm.section}. In Subsection \ref{interpolating.section} we define interpolating matrices and show how to carry out discrete partial integration in two dimensions. In Subsection \ref{effective-sparsity.section} we prove a bound on the effective sparsity and in Subsection \ref{noise.section} we show how to find suitable noise weights.

\subsection{Main result}\label{main.result.section}
We present the main result: an oracle inequality for the estimator 
 $\hat {\tilde f} $ of the interaction terms $\tilde{f}^0$.
 
We fix an active set $S \subseteq [3 : n_1-1 ] \times [3: n_2-1]$. The discussion that follows, and in particular  also Theorem \ref{main.theorem},  depends on the choice of $S$, which can therefore be considered as a ``free parameter''.
 
Given an active set $S \subseteq [3 : n_1-1 ] \times [3: n_2-1]$, we can partition $[2: n_1 ] \times [2: n_2]$ into $s$ subsets, consisting of the points closest to $t_m$, $m=1 , \ldots , s$ with respect to the city block metric. This corresponds to a Voronoi tessellation. 
However, a Voronoi tessellation typically has subsets of relatively irregular shape. We 
will require that the partition consists of rectangles to ease the construction of an interpolating matrix. 
The concept of interpolating matrix is presented in  Section \ref{interpolating.section} and will be applied in the bound for the effective sparsity in Section \ref{effective-sparsity.section}.

\begin{definition}[Rectangular tessellation]\label{def.rectangular.tess}  We call $\{ R_m \}_{m=1}^s $ a rectangular tessellation of $[2: n_1 ] \times [2: n_2] $
if it satisfies the following conditions:\\
$\bullet$ each $R_m \subseteq [2:n_1]\times[2:n_2]  $ is a rectangle ($m=1 , \ldots , s $);\\
$\bullet$ $ \cup_{m=1}^s R_m = [2: n_1 ] \times [ 2: n_2] $;\\
$\bullet$ for all $m $ and $m^{\prime}\not= m $, the rectangles $R_{m}$ and $R_{m^{\prime}} $ ($m \not=m^{\prime}$) possibly share
boundary points, but not interior points;\\
$\bullet$ for all $m$, the jump location $t_m$ is an interior point of $R_m $.
\end{definition}

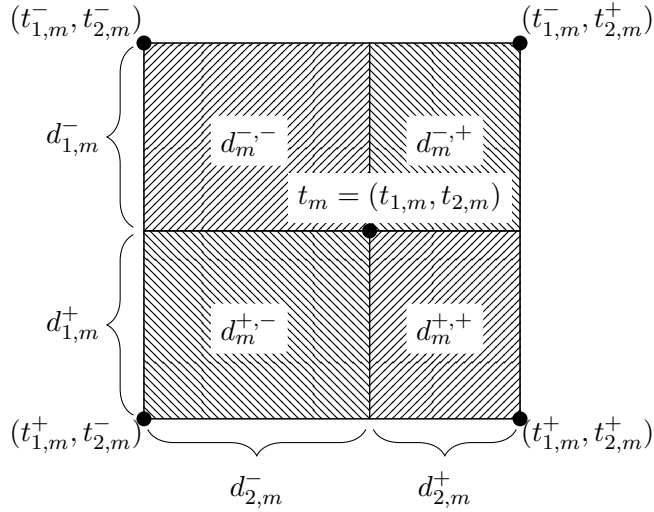
\begin{figure}[h]
\centering
\begin{tikzpicture}
\draw (0,0) rectangle (10/2,10/2);
\draw (0,5/2) -- (10/2,5/2);
\draw (6/2,0) -- (6/2,10/2);

\draw[pattern=north west lines] (0,0) rectangle (6/2,5/2);
\draw[pattern=north west lines] (6/2,5/2) rectangle (10/2,10/2);
\draw[pattern=north east lines] (0,5/2) rectangle (6/2,10/2);
\draw[pattern=north east lines] (6/2,0) rectangle (10/2,5/2);

\node[fill=white,rounded corners=2pt] at (2.8/2,2.4/2) {$d_m^{+,-}$};
\node[fill=white,rounded corners=2pt] at (2.8/2,7.4/2) {$d_m^{-,-}$};
\node[fill=white,rounded corners=2pt] at (8/2,7.4/2) {$d_m^{-,+}$};
\node[fill=white,rounded corners=2pt] at (8/2,2.4/2) {$d_m^{+,+}$};

\node[circle,inner sep=2pt,fill=black,label={[shift={(-.9,-.7)}]$(t_{1,m}^+,t_{2,m}^-)$}] at (0,0) {};
\node[circle,inner sep=2pt,fill=black,label={[shift={(.9,-.7)}]$(t_{1,m}^+,t_{2,m}^+)$}] at (10/2,0) {};
\node[circle,inner sep=2pt,fill=black,label={[shift={(0.9,-.2)}]$(t_{1,m}^-,t_{2,m}^+)$}] at (10/2,10/2) {};
\node[circle,inner sep=2pt,fill=black,label={[shift={(-.9,-.2)}]$(t_{1,m}^-,t_{2,m}^-)$}] at (0,10/2) {};

\node[circle,inner sep=2pt,fill=black,label={[shift={(.8/2,0)},fill=white]$t_m=(t_{1,m},t_{2,m})$}] at (6/2,5/2) {};

\draw [decorate,decoration={brace,amplitude=10pt},xshift=-4pt,yshift=0pt] (0,0.05) -- (0,5/2-.05) node [black,midway,xshift=-0.8cm] { $d_{1,m}^+$};
\draw [decorate,decoration={brace,amplitude=10pt},xshift=-4pt,yshift=0pt] (0,5/2+.05) -- (0,10/2-.05) node [black,midway,xshift=-0.8cm] { $d_{1,m}^-$};
\draw [decorate,decoration={brace,amplitude=10pt,mirror},yshift=-4pt,xshift=0pt] (0+.05,0) -- (6/2-.05,0) node [black,midway,yshift=-0.8cm] { $d_{2,m}^-$};
\draw [decorate,decoration={brace,amplitude=10pt,mirror},yshift=-4pt,xshift=0pt] (6/2+.05,0) -- (10/2-.05,0) node [black,midway,yshift=-0.8cm] { $d_{2,m}^+$};
\end{tikzpicture}
\caption{Illustration of a rectangle $R_m$ of the rectangular tessellation $\{R_m\}_{m=1}^s$, defined in Definition \ref{def.rectangular.tess}.}\label{fig.2}
\end{figure}

For a rectangular tessellation $\{ R_m \}_{m=1}^s$ we let
$d_m^{--} $ be the area of the rectangle $R_m$ North-West of $t_m$, $d_m^{+-}$ the area to the
South-West, $d_m^{++}$ the area to the South-East and $d_m^{-+}$ the area to the North-East.
In other words, if
$(t_{1,m}^-, t_{2,m}^-)$, $(t_{1,m}^-,  t_{2,m}^+)$, $(t_{1,m}^+, t_{2,m}^+)$, $(t_{1,m}^+,  t_{2,m}^-)$
are the four corners
of the rectangle $R_m$, starting with the top-left corner and going clockwise along the boundary,
then
\begin{align*}
d_m^{--} &= d_{1,m}^- d_{2,m}^- , & d_m^{-+} &= d_{1,m}^- d_{2,m}^+ , \\
d_m^{+-} &= d_{1,m}^+d_{2,m}^-  ,& d_m^{++} &= d_{1,m}^+ d_{2,m}^+ , 
\end{align*}
where
\begin{align*}
 d_{1,m}^- &= (t_{1,m} - t_{1,m}^- ) , & d_{2,m}^- & = (t_{2,m} - t_{2,m}^- ) ,\\
 d_{1,m}^+ & = (t_{1,m}^+ - t_{1,m}  ) , &  d_{2,m}^+& = (t_{2,m}^+ - t_{2,m}  ) .
\end{align*}
The rectangle $R_m$ is illustrated in Figure \ref{fig.2}.

 Fix  a set $S \subseteq [3 : n_1-1 ] \times [3: n_2-1]$ and a rectangular tessellation $\{R_m\}_{m=1}^s$. 
  Let $ d_{1, {\rm max} } (S) := \max_{m \in [1:s] } \max\{ d_{1,m}^-, d_{1,m}^+ \}$ and $ d_{2, {\rm max} } (S) := \max_{m \in [1:s] } \max\{ d_{2,m}^-, d_{2,m}^+ \}  $. The quantity $d_{1, {\rm max} } (S)$ (respectively  $ d_{2, {\rm max} } (S)$) denote the maximal horizontal (respectively vertical) distance from  a jump location to the boundary of the corresponding rectangular region in the rectangular tessellation $\{R_m\}_{m=1}^s$.

For simplicity, we do not elaborate on the dependence  on the sign configuration in the effective sparsity and focus instead on a worst-case upper bound holding for all sign configurations. In other words, we bound the worst case $\Gamma (S, {v}_{-S} ):=\max_{q_S} \Gamma (S, {v}_{-S}, q_S )$ rather than  $\Gamma (S, {v}_{-S}, q_S )$.

 \begin{theorem}[Adaptivity of image denoising with total variation] \label{main.theorem}  
 Let $g\in \R^{n_1 \times n_2}$ be arbitrary.  Let $x,\ t>0$.
Choose
 $$ \lambda \ge 2    \sqrt {\frac{d_{1, {\max}} (S)}{ n_1}+ \frac{d_{2, {\rm max} }(S) }{n_2}} \  \lambda_0 (t)  .
  $$
Then, with probability at least $1- e^{-x}-e^{-t}$, it holds that
\begin{equation*}
 \| \hat {\tilde f} - \tilde f^0  \|_2^2 /n \le\| { g} - \tilde f^0 \|_2^2 / n  + 4\lambda \norm{(\Delta { g})_{-S}}_1  +
\biggl (\sigma \sqrt { s\over n} + \sigma \sqrt {2x \over n } + {\lambda \Gamma (S, {v}_{-S} )}  \biggr )^2,    \end{equation*}
 where
 \begin{equation}\label{Gamma-bound.equation}
  \Gamma^2 (S , {v}_{-S} )  \le {1 \over 2} \biggl (\log  (e n_1)+ \log ( e n_2)  \biggr )
\sum_{m=1}^s \biggl ( {n \over d_m^{--} } +{n \over d_m^{-+ }}+ {n \over d_m^{++ }  } + {n\over d_m^{+-} } \biggr )     .
 \end{equation}
\end{theorem}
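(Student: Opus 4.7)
The proof proceeds by specializing the general oracle inequality of Theorem \ref{2d-appB-thm3} to the present setting. Two ingredients must be supplied: (i) a bound on the antiprojections $\tilde v$ (hence on the inverse scaling factor $\tilde\gamma$), and (ii) a bound on the effective sparsity $\Gamma(S,v_{-S})$ that holds uniformly over sign configurations $q_S$. Once both are in place, plugging them into Theorem \ref{2d-appB-thm3} immediately yields the stated inequality, and verifying that the condition on $\lambda$ in the statement matches $\lambda\ge\tilde\gamma\lambda_0(t)$ completes the argument.

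For ingredient (i), I would show $\tilde\gamma\le 2\sqrt{d_{1,\max}(S)/n_1+d_{2,\max}(S)/n_2}$. The rationale is geometric: since the atoms $\tilde\psi^{(j,k)}$ are ANOVA-centered indicators of half-rectangles, any non-active atom $\tilde\psi^{(j,k)}$ with $(j,k)\notin S$ can be well-approximated in the span of the four surrounding active atoms $\tilde\psi^{t_m}$ by a bilinear combination. The residual of this approximation bounds the antiprojection $\|(\mathrm{I}-\mathrm{P}_S)\tilde\psi^{(j,k)}\|_2/\sqrt n$, and its magnitude is controlled by the horizontal and vertical distances from $(j,k)$ to the nearest jump location, which in turn are bounded by $d_{1,\max}(S)$ and $d_{2,\max}(S)$. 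This is precisely the role of Subsection \ref{noise.section}.

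For ingredient (ii), the plan is to bound $q_S^T(\Delta f)_S-\|(1-v)_{-S}\odot(\Delta f)_{-S}\|_1$ under $\|f\|_2^2/n=1$ via the interpolating-matrix machinery of Lemma \ref{interpolating.lemma}. Given a sign configuration $q_S$, we construct on each rectangle $R_m$ a piecewise ``tent''-type matrix $h_m$ supported on $R_m$ that takes value $q_{t_m}$ at $t_m$ and vanishes on $\partial R_m$; these tents are summed into a global interpolating matrix $h$. Two-dimensional discrete partial integration (Lemma \ref{partial-integration.lemma}), which is available precisely because $2<t_{1,m}<n_1$ and $2<t_{2,m}<n_2$ eliminate boundary terms, converts $q_S^T(\Delta f)_S$ into an inner product between $f$ and a quantity built from $h$; the noise-weight slack $(1-v)_{-S}\odot(\Delta f)_{-S}$ is designed exactly to absorb the off-active contribution. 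Cauchy--Schwarz together with an $L^2$ computation on each tent gives an upper bound of order $n/d_m^{--}+n/d_m^{-+}+n/d_m^{++}+n/d_m^{+-}$ per rectangle after an optimal weighting, producing the factor $\tfrac12(\log(en_1)+\log(en_2))$ from balancing horizontal and vertical profiles, exactly as in the bound \eqref{Gamma-bound.equation}.

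The main obstacle is the two-dimensional interpolating-matrix construction itself and the verification that it couples cleanly to the noise weights. In 1D (as in \cite{vand19}) the analogous construction uses an interpolating hat-type vector and the logarithmic factor arises from a weighted profile; in 2D one must simultaneously control row and column discrete derivatives on every $R_m$ and ensure that contributions along shared boundaries between neighboring rectangles cancel correctly, which is the very reason Definition \ref{def.rectangular.tess} insists that $\{R_m\}$ be rectangular rather than a general Voronoi partition. Checking that the two coordinate directions contribute additively, giving $\log(en_1)+\log(en_2)$ rather than a product or a worse polynomial factor, is the technical heart of the argument.
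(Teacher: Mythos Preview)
Your proposal is correct and follows essentially the same route as the paper: specialize Theorem \ref{2d-appB-thm3} by (i) bounding the antiprojections to obtain $\tilde\gamma$ and noise weights $v$, and (ii) bounding $\Gamma(S,v_{-S})$ via an interpolating matrix and partial integration. Two small points where the paper's details differ from your sketch are worth noting. First, the antiprojection bound in Lemma \ref{antiprojections.lemma} does not use a bilinear combination of four active atoms; it simply approximates $\psi^{j,k}$ by the single active atom $\psi^{t_m}$ of the rectangle $R_m$ containing $(j,k)$, giving $\|{\rm A}_{\cal U}\psi^{j,k}\|_2^2/n\le |j-t_{1,m}|/n_1+|k-t_{2,m}|/n_2$; the passage from $\tilde\psi$ to $\psi$ is handled by the contraction in Lemma \ref{projections.lemma}. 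Second, the log factor does not arise from an optimization over weightings but from an explicit construction: on each sub-rectangle the interpolating matrix is the average of two product-structure tents, one with a square-root profile in $j$ and linear in $k$, the other with the roles swapped (equation \eqref{weights.equation}); the square-root profile is exactly what makes $\sum_j|\Delta w|^2$ behave like a harmonic sum and produce $\log(en_1)$ and $\log(en_2)$. The coupling between $v$ and $w$ that you flag as the main obstacle is resolved by Lemmas \ref{boundweights.lemma} and \ref{auxiliary.lemma}, which show the antiprojection-induced weights are dominated by $1-w$.
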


We note that the upper bound on the mean squared error in the above theorem depends on ${\tilde f}^0$, $g$ and $S$. The true underlying interaction terms ${\tilde f}^0$ are typically fixed, while the choices of $g$ and - under some constraints - of $S$ are arbitrary. As such, the upper bound can  be optimized over $g$ and $S$. A pair $(g=f^*,S=S^*)$ optimizing the upper bound is called ``oracle'' and depends on ${\tilde f}^0$. The optimized upper bound depends therefore on ${\tilde f}^0$ and consists of the optimal tradeoff between the approximation error of ${\tilde f}^0$ by the oracle signal $f^*$ and the estimation error of the piecewise rectangular structure encoded in the oracle active set $S^*$. This piecewise rectangular structure is estimated almost at a parametric rate, this means, almost as if the number and the locations of the elements of $S^*$ were known.

In this sense, Theorem \ref{main.theorem} is an adaptive result: the bound on the mean squared error of the estimator $\hat{\tilde f}$ of the interaction terms varies depending on the underlying interaction terms ${\tilde f}^0$ to estimate. The estimator $\hat{\tilde f}$ can therefore sense the structure in ${\tilde f}^0$ -- as for instance the number and the location of its jumps -- and adapt to it.

 Theorem \ref{main.theorem} gives also a theoretical justification for choosing the tuning parameter smaller than the universal choice $\lambda=\lambda_0(t)$. However the active set of the true image $f^0$ or of its oracle approximation might not be known in practice, so that one might have to choose $\lambda= \lambda_0(t)$. The choice $\lambda= \lambda_0(t)$ in the setting of Theorem \ref{thm4} with $s_1=s_2$ results then in an oracle bound of order $s^{2} /n $, up to log-terms.

Theorem \ref{main.theorem} can be obtained from Theorem  \ref{2d-appB-thm3} by finding a bound on the (worst-case) effective sparsity $\Gamma(S,v_{-S})$, which is the main contribution of Section \ref{TV2D-JMLR-Section7}.


In Subsection \ref{interpolating.section}, we expose some tools needed to bound the effective sparsity. Of the particular interest is the concept of interpolating matrix, which is an adaptation of the interpolating vector by \cite{vand19} to two dimensions. 

In Subsection \ref{effective-sparsity.section} we will take  the interpolating matrix for given and bound the effective sparsity based on it and on the tools exposed in Subsection \ref{interpolating.section}. 

The results of Subsection \ref{noise.section} will show that the interpolating matrix given in Subsection \ref{effective-sparsity.section} is indeed a valid interpolating matrix. 

Subsection \ref{proof.main.thm.section} combines the results of Subsections \ref{interpolating.section}-7.4 \ref{noise.section} to prove Theorem \ref{main.theorem}.
%

\subsection{Interpolating matrix and partial integration} \label{interpolating.section}

We now rewrite the definition of effective sparsity (Definition \ref{effective.sparsity}) in matrix instead of vector form.
Let $q_S=\{(q_S)_{j,k}\}_{(j,k)\in [2:n_1]\times [2:n_2]}$ and $v=\{v_{j,k}\}_{(j,k)\in [2:n_1]\times [2:n_2]}$ be a sign configuration and  noisy weights written in matrix form.

\begin{definition}[Effective sparsity in matrix form]\label{effective.sparsity.matrix}
The effective sparsity is defined as
$$\Gamma ( S, {v}_{-S},q_S ) = \max  \{   {\rm trace}(q_S^{T}  ( D_1 f D_2^T)_S)  - \| (1- {v})_{-S}  \odot ( D_1 f D_2^T)_{-S} \|_1  :\| f \|_2^2 / n=1 \}.$$
Moreover we write
$$ \Gamma ( S, {v}_{-S}):= \max_{q_S}\ \Gamma ( S, {v}_{-S},q_S ).$$
\end{definition}

We define an interpolating matrix. The interpolating matrix will be a tool for finding a bound on the effective sparsity. The concept of interpolating vector (and matrix) is inspired by the dual certificate by \cite{cand14}. Related concepts appear also earlier in the literature in \cite{fuch04b,cand13a}. \cite{vand19} make a connection between interpolating matrix and effective sparsity. This connection is here extended to the two-dimensional case. 

\begin{definition}[Interpolating matrix]
Let $q_S \in \{ -1,0,1 \}^{(n_1-1)\times (n_2-1)} $ be a sign configuration and $v \in [0,1]^{(n_1-1)\times (n_2-1)}$ be a matrix of weights.
We call  an interpolating matrix for the sign configuration $q_S$ and the weights $v$ a matrix $w (q_S)= \{ w_{j,k} (q_S)\}_{(j,k)\in [ 2 : n_1]\times [2: n_2]}$ having the following properties:\\
$\bullet$ $w_{t_m}(q_S) = q_{t_m} $, $\forall m\in [1:s] $,\\
$\bullet$ $|w_{j,k}(q_S)  | \le 1- {v}_{j,k}  , \ \forall (j,k) \notin S $.
\end{definition}

The interpolating matrix $w (q_S)$ can be interpreted to belong to the subdifferential of $\norm{\Delta h}_1$ for some matrix  $h\in \R^{n_1 \times n_2}$ with the same sign configuration. The choices of the active set $S$, the sign configuration $q_S$ and the matrix $h$ are not tied to the estimator $\hat{\tilde f}$.

For completeness, we give the matrix version of Lemma 4.2 in \cite{vand19}.

\begin{lemma}[How to bound the effective sparsity]\label{interpolating.lemma}
We have
$$\Gamma^2  (S, {v}_{-S},q_S ) \le n \min_{w(q_S)}  \| D_1^T w(q_S)   D_2 \|_2^2$$
where the minimum is over all interpolating matrices $w(q_S) $ for the sign configuration $q_S$.  
\end{lemma}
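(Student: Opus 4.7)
The strategy is a standard dual-certificate argument, adapted to the two-dimensional/matrix setting. Fix a sign configuration $q_S$ and an arbitrary interpolating matrix $w(q_S)$. I will bound the expression inside the max in Definition \ref{effective.sparsity.matrix} uniformly over $f$ with $\|f\|_2^2/n=1$, by using $w(q_S)$ to ``linearize'' the piecewise-defined functional, and then apply Cauchy--Schwarz in the Frobenius inner product.

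\textbf{Step 1 (splitting).} Since $w(q_S)$ agrees with $q_S$ on $S$, for any $f$ the Frobenius inner product decomposes as
\[
\mathrm{trace}\bigl(w(q_S)^T D_1 f D_2^T\bigr) \;=\; \mathrm{trace}\bigl(q_S^T (D_1 f D_2^T)_S\bigr) \;+\; \sum_{(j,k)\notin S} w_{j,k}(q_S)\,(D_1 f D_2^T)_{j,k}.
\]
Solving for the first term on the right and using $|w_{j,k}(q_S)|\le 1-v_{j,k}$ off $S$, the absolute value of the sum over $(j,k)\notin S$ is at most $\|(1-v)_{-S}\odot (D_1 f D_2^T)_{-S}\|_1$. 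Hence
\[
\mathrm{trace}\bigl(q_S^T (D_1 f D_2^T)_S\bigr) - \|(1-v)_{-S}\odot (D_1 f D_2^T)_{-S}\|_1 \;\le\; \mathrm{trace}\bigl(w(q_S)^T D_1 f D_2^T\bigr).
\]

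\textbf{Step 2 (adjoint + Cauchy--Schwarz).} By cyclicity of the trace and the adjoint relation for the linear map $f\mapsto D_1 f D_2^T$,
\[
\mathrm{trace}\bigl(w(q_S)^T D_1 f D_2^T\bigr) \;=\; \mathrm{trace}\bigl((D_1^T w(q_S) D_2)^T f\bigr),
\]
which is the Frobenius inner product of $D_1^T w(q_S) D_2$ and $f$. Cauchy--Schwarz then yields
\[
\mathrm{trace}\bigl((D_1^T w(q_S) D_2)^T f\bigr) \;\le\; \|D_1^T w(q_S) D_2\|_2 \, \|f\|_2 \;=\; \sqrt{n}\,\|D_1^T w(q_S) D_2\|_2,
\]
using the normalization $\|f\|_2=\sqrt{n}$.

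\textbf{Step 3 (conclude).} Combining Steps 1 and 2 and taking the maximum over $f$ with $\|f\|_2^2/n=1$ gives $\Gamma(S,v_{-S},q_S) \le \sqrt{n}\,\|D_1^T w(q_S) D_2\|_2$. Since the interpolating matrix $w(q_S)$ was arbitrary, minimizing the right-hand side over all such matrices and squaring yields the claim. The whole argument is essentially bookkeeping on top of a one-line duality identity; there is no real obstacle beyond keeping track of which indices are in $S$ (so that the equality $w_S(q_S)=q_S$ can be used to kill the sign-dependent part) and matching the adjoint of $f\mapsto D_1 f D_2^T$ correctly so that the $\|\cdot\|_2$ appearing on the right is of $D_1^T w(q_S) D_2$ and not its transpose.
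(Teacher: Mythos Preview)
Your proof is correct and follows essentially the same route as the paper's own argument: linearize the objective via the interpolating matrix (using $w_S(q_S)=q_S$ and $|w_{-S}(q_S)|\le 1-v_{-S}$), then move the difference operators onto $w$ by trace cyclicity and apply Cauchy--Schwarz in the Frobenius inner product. The paper's proof is the same chain of inequalities written slightly more tersely.
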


\begin{proof}
See Appendix \ref{interpolating.proof}.
\end{proof}

The proof of Lemma \ref{interpolating.lemma} uses the equation
$$ {\rm trace} (w^T  D_1 f D_2^T ) = {\rm trace}( D_2^T w^T D_1 f ) .$$
When $D_1 f D_2^T = \Delta f $ this equality is called
partial integration. We  study it further in the next lemma.

\begin{lemma}[Partial integration in two dimensions with zero boundaries]\label{partial-integration.lemma}
Choose arbitrarily a matrix $w =
\{ w_{j,k}\}_{(j,k) \in [2: n_1 ] \times [2: n_2] } $ with its boundary entries equal to zero, i.e., 
$$ w_{j, 2}= w_{j, n_2} =0, \ \forall \ j \in [2: n_1] \text{ and } w_{2, k}=w_{n_1, k} =0, \ \forall \ k \in [2 : n_2] . $$
Then 
 $${\rm trace}(w^T\Delta f) = \sum_{k=2}^{n_2}  \sum_{j=2}^{n_1} w_{j,k} (\Delta f)_{j,k} =
 \sum_{j=2}^{n_1 -1} \sum_{k=2}^{n_2-1}(\Delta w)_{j+1 , k+1}  f_{j,k}  . $$
\end{lemma}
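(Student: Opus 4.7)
The identity is a discrete integration-by-parts in two dimensions, and I would prove it by brute-force index shifting followed by verifying that the hypothesized zero boundary conditions on $w$ exactly kill the boundary contributions.

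The plan is to start from the middle expression $\sum_{j=2}^{n_1}\sum_{k=2}^{n_2} w_{j,k}(\Delta f)_{j,k}$ and substitute the definition of $\Delta f$, splitting it into four double sums corresponding to the terms $f_{j,k}$, $-f_{j,k-1}$, $-f_{j-1,k}$, $f_{j-1,k-1}$. In each of these four sums I would shift the summation variables (via $j \mapsto j+1$ and/or $k \mapsto k+1$) so that every occurrence of $f$ is written as $f_{j,k}$. This produces four sums over slightly different index ranges, and after combining them the coefficient of $f_{j,k}$ becomes, for $(j,k) \in [1:n_1]\times[1:n_2]$,
\begin{equation*}
c_{j,k} := w_{j,k}\,\mathbf{1}_{j\ge 2, k\ge 2} - w_{j,k+1}\,\mathbf{1}_{j\ge 2, k\le n_2-1} - w_{j+1,k}\,\mathbf{1}_{j\le n_1-1, k\ge 2} + w_{j+1,k+1}\,\mathbf{1}_{j\le n_1-1, k\le n_2-1},
\end{equation*}
with the convention $w_{j,k}=0$ whenever an index leaves $[2:n_1]\times[2:n_2]$.

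For $(j,k)$ in the interior $[2:n_1-1]\times[2:n_2-1]$ all four indicators are active, and $c_{j,k}= w_{j,k} - w_{j,k+1} - w_{j+1,k} + w_{j+1,k+1}$, which is precisely $(\Delta w)_{j+1,k+1}$. The remaining step is to check that $c_{j,k}=0$ for every boundary index: I would walk through the cases $j\in\{1,n_1\}$ and $k\in\{1,n_2\}$ (nine cases in total, corners and edges). In each case the surviving $w$-terms all have a first index in $\{2,n_1\}$ or a second index in $\{2,n_2\}$, so the hypothesis $w_{j,2}=w_{j,n_2}=0$ and $w_{2,k}=w_{n_1,k}=0$ makes them vanish. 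For example, at $(j,k)=(1,1)$ only the last term survives and equals $w_{2,2}=0$; at $(j,1)$ with $j\in[2:n_1-1]$ only the second and fourth terms survive and give $-w_{j,2}+w_{j+1,2}=0$; the other cases are analogous.

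Combining the interior identification with the vanishing of the boundary coefficients immediately yields
\begin{equation*}
\sum_{j=2}^{n_1}\sum_{k=2}^{n_2} w_{j,k}(\Delta f)_{j,k} = \sum_{j=2}^{n_1-1}\sum_{k=2}^{n_2-1}(\Delta w)_{j+1,k+1}\, f_{j,k}.
\end{equation*}
The trace equality $\mathrm{trace}(w^T\Delta f)=\mathrm{trace}(D_2^T w^T D_1 f)$ is just the cyclic property of the trace applied to $\Delta f = D_1 f D_2^T$, and the computation above is the concrete realization of that cyclic shift at the entry level. The only subtle point is the bookkeeping of boundary cases, and the assumed zeros on the outer rows/columns of $w$ are chosen exactly to make that bookkeeping trivial; there is no real analytical difficulty.
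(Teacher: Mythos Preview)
Your proof is correct, but it takes a different route from the paper. The paper's argument is more structured: it first records the one-dimensional summation-by-parts identity (Abel summation)
\[
\sum_{j=2}^{N} a_j (b_j - b_{j-1}) = -\sum_{j=2}^{N-1} (a_{j+1} - a_j) b_j
\]
valid when $a_2=a_N=0$, and then applies this twice---once to the inner sum (in $j$, say) and once to the outer sum (in $k$). This exploits the tensor-product structure $\Delta f = D_1 f D_2^T$ and avoids having to enumerate the nine boundary cases explicitly; the zero-boundary hypothesis is invoked twice, one direction at a time. Your approach instead expands $(\Delta f)_{j,k}$ into its four summands, shifts indices in each, and then collects the coefficient of $f_{j,k}$ and checks the boundary cases directly. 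Both arguments are elementary and complete; the paper's is shorter and more modular, while yours is more explicit about exactly which boundary zeros are used where.
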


\begin{proof}
See Appendix \ref{part.int.proof}.
\end{proof}

To obtain a bound on the effective sparsity, we now have to find a suitable interpolating matrix.  We then compute the Frobenius norm of its total derivative with the help of partial integration.

\subsection{A bound for the effective sparsity} \label{effective-sparsity.section} 
Given a set $S \subseteq [3, n_1-1] \times [3, n_2-1]$, let  $\{ R_m \}_{m=1}^s $ be a rectangular tessellation. 
Each jump location $t_m$ is an interior point of $R_m$.
The rectangle $R_m$ consists of a North-West rectangle $R_m^{--}$ a
North-East rectangle $R_m^{-+}$ a South-East rectangle $R^{++}$ and a South-West
rectangle $R^{+-}$.  Thus
\begin{eqnarray*}
R_m^{--}&:=&  \{ (j,k): \ t_{1,m}^-  \le j \le t_{1,m}  , \ t_{2,m}^-   \le k \le t_{2,m}\}, \\
 R_m^{-+}&:= & \{ (j,k): \ t_{1,m}^-  \le j \le t_{1,m} , \ t_{2, m}  \le k \le t_{2, m}^+  \} ,\\
R_m^{++}&:= & \{ (j,k): \ t_{1,m}  \le j \le t_{1,m}^+, \ t_{2, m}  \le k \le t_{2, m}^+ \}, \\
R_m^{+-}&:= & \{ (j,k): \ t_{1,m}  \le j \le t_{1,m}^+ , \ t_{2,m}^-   \le k \le t_{2,m} \} . 
\end{eqnarray*}

Let $z_1,\ z_2 \in \{+,-\}$. For $m=1 , \ldots , s$ and $(j,k)\in R_m^{z_1 z_2}$  the weights will be
\begin{equation}\label{weights.equation}
 {v}_{j,k} =  1- {1 \over 2} \left (1- \sqrt {\frac{|j-t_{1,m} |}{ d_{1,m}^{z_1}}} \right )\left  (1- {\frac{|k-t_{2,m} |}{ d_{2,m}^{z_2} }}\right ) -  {1 \over 2} \left (1- {\frac{|j-t_{1,m} |}{d_{1,m}^{z_1}}} \right )\left  (1- \sqrt {\frac{|k-t_{2,m}|}{d_{2,m}^{z_2}} }\right ) ,\end{equation} 
where $( d_{1,m}^{-} , d_{1,m}^+, d_{2,m}^-, d_{2,m}^+)$ are given in Subsection \ref{main.result.section}. 

For $q_S\in \{ - 1,0,1\}^{(n_1-1)\times (n_2-1)}  $, take 
\begin{equation}\label{q.equation}
w_{j,k} (q_S) =\begin{cases}  +1- {v}_{j,k}, & q_{t_m}=+1  \cr
 -1 + {v}_{j,k} , & q_{t_m} =-1 \cr \end{cases}, \ (j,k) \in R_m, \ m \in [s].
\end{equation}
Then $w(q_S)$ is an interpolating matrix for $q_S$. Moreover, it has the property that
$w_{j,k} (z)=0$ as soon as $(j,k) $ is at the boundary of $R_m$ for some $m \in [s]$. 

\begin{remark}[Dependence on the rectangular tessellation]
Given $S$, the weights  in \eqref{weights.equation} and the interpolating matrix  in \eqref{q.equation} both depend on the rectangular tessellation $\{R_m\}_{m \in [s]}$ chosen. Thus, also the bound on the effective sparsity derived in the next lemma depends on $\{R_m\}_{m \in [s]}$ and can be interpreted to hold, given a set $S$,  for an arbitrary rectangular tessellation $\{R_m\}_{m \in [s]}$.
\end{remark}

\begin{lemma}[Bound on the (worst-case) effective sparsity]\label{Gamma.lemma}
With the weights ${v}$ given in (\ref{weights.equation}) we have
$$\Gamma^2 (S, {v}_{-S}) \le\frac{1}{2}\biggl ( \log (en_1)+ \log (en_2)\biggr ) 
\sum_{m=1}^s \biggl ( { n\over d_{m}^{-- } }+{n \over d_{m}^{-+}  }+ {n \over d_m^{++}  } + {n \over d_m^{+-}}\biggr )     .$$
\end{lemma}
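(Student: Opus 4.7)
The plan is to apply Lemma \ref{interpolating.lemma} with the explicit interpolating matrix $w(q_S)$ from \eqref{q.equation}, which reduces the bound to estimating $n\,\|D_1^T w(q_S)D_2\|_2^2$. The construction \eqref{weights.equation}--\eqref{q.equation} is engineered so that $w(q_S)$ vanishes on the outer boundary of every $R_m$ (since there either $|j-t_{1,m}|/d_{1,m}^{z_1}=1$ or $|k-t_{2,m}|/d_{2,m}^{z_2}=1$, killing both summands of $1-v_{j,k}$), so the discrete second-order mixed difference $(D_1^T wD_2)_{j,k}=w_{j,k}-w_{j-1,k}-w_{j,k-1}+w_{j-1,k-1}$ does not couple distinct rectangles. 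Although the four-point stencil can straddle an axis $j=t_{1,m}$ or $k=t_{2,m}$, the $w$-values on each axis are unambiguous (both sides give the same value, since $a=0$ or $b=0$), so the squared Frobenius norm decomposes cleanly as
\[
\|D_1^T w(q_S) D_2\|_2^2 \;=\; \sum_{m=1}^s \sum_{z_1,z_2\in\{-,+\}}\sum_{(j,k)\in R_m^{z_1z_2}}(D_1^T w(q_S) D_2)_{j,k}^2 ,
\]
with each summand's stencil supported on $w$-values from $R_m^{z_1z_2}$ together with its adjacent axes.

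Inside a single sub-rectangle, say $R_m^{--}$, the function $1-v_{j,k}$ is a sum of two \emph{separable} products in the scaled coordinates $a=(t_{1,m}-j)/d_{1,m}^-$ and $b=(t_{2,m}-k)/d_{2,m}^-$, namely $\tfrac12(1-\sqrt a)(1-b)+\tfrac12(1-a)(1-\sqrt b)$, and I use that for a separable product $f(j)g(k)$ the mixed second difference factorizes as $(f_j-f_{j-1})(g_k-g_{k-1})$. The two relevant one-dimensional differences are either trivial, $(1-a_j)-(1-a_{j-1})=1/d_{1,m}^-$, or of square-root type, $(\sqrt{i+1}-\sqrt{i})/\sqrt{d_{1,m}^-}$ with $i=t_{1,m}-j$, the latter bounded pointwise by $1/(2\sqrt{i\,d_{1,m}^-})$ for $i\ge 1$. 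When the sum $A+B$ of the two separable pieces is squared and summed over the sub-rectangle, the two squared diagonal pieces collapse to harmonic sums $\sum_{i=1}^{d_{1,m}^-}1/i\le\log(e\,d_{1,m}^-)\le\log(en_1)$ and $\log(en_2)$ (with an exceptional bounded contribution from the $i=0$ or $l=0$ endpoints where the pointwise bound $\sqrt{i+1}-\sqrt i\le 1/(2\sqrt i)$ fails and one uses instead $\sqrt 1-\sqrt 0=1$), while the cross term $2AB$ is evaluated by the telescoping identity $\sum_{i=0}^{N-1}(\sqrt{i+1}-\sqrt i)=1$ and contributes only a constant multiple of $1/d_m^{--}$.

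Combining these contributions on $R_m^{--}$ (and the three symmetric sub-rectangles) and absorbing the bounded constants into the logarithmic factor via $\log(en_1)+\log(en_2)\ge 2$ yields a per-sub-rectangle bound of the form $\tfrac12[\log(en_1)+\log(en_2)]/d_m^{z_1z_2}$. Summing over the four sub-rectangles and over $m=1,\ldots,s$, multiplying by $n$ via Lemma \ref{interpolating.lemma}, and observing that $|w(q_S)|$ (and hence the bound) is independent of the sign configuration $q_S$, yields the estimate \eqref{Gamma-bound.equation} for $\Gamma^2(S,v_{-S})=\max_{q_S}\Gamma^2(S,v_{-S},q_S)$. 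The main obstacle is the careful bookkeeping of numerical constants inside each sub-rectangle: the two squared diagonal pieces, the telescoped cross term, and the exceptional endpoint contributions must be combined sharply enough to give exactly the advertised $\tfrac12$ prefactor rather than a larger constant in front of $[\log(en_1)+\log(en_2)]/d_m^{z_1z_2}$.
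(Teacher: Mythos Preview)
Your proposal is correct and follows the same architecture as the paper's proof: apply Lemma~\ref{interpolating.lemma} with the explicit $w(q_S)$ of \eqref{q.equation}, use the vanishing of $w$ on the boundaries of the $R_m$ to decouple the rectangles, and exploit the product structure on each sub-rectangle $R_m^{z_1z_2}$ so that the mixed second difference factors as a product of one-dimensional first differences.

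The only difference is in the last estimation step, and here the paper takes a shorter route than you do. Rather than expanding $(A+B)^2=A^2+2AB+B^2$, treating the cross term by telescoping, and handling the $i=0$ endpoint separately, the paper simply uses the crude inequality $(a+b)^2\le 2a^2+2b^2$ together with the (weaker but uniformly valid) one-dimensional bound $\sqrt{j}-\sqrt{j-1}\le 1/\sqrt{j}$ for all $j\ge 1$. This gives, on $R^{z_1z_2}$,
\[
|\Delta w_{j,k}|\le \tfrac12\,\frac{1}{\sqrt{d_1^{z_1}\,|j|}}\,\frac{1}{d_2^{z_2}}+\tfrac12\,\frac{1}{d_1^{z_1}}\,\frac{1}{\sqrt{d_2^{z_2}\,|k|}},
\]
whence $(\Delta w_{j,k})^2\le \tfrac12\frac{1}{d_1^{z_1}|j|(d_2^{z_2})^2}+\tfrac12\frac{1}{(d_1^{z_1})^2 d_2^{z_2}|k|}$, and summing over $|j|\in[1:d_1^{z_1}]$, $|k|\in[1:d_2^{z_2}]$ lands directly on $\tfrac12(\log(ed_1^{z_1})+\log(ed_2^{z_2}))/(d_1^{z_1}d_2^{z_2})$ with no residual constants to absorb. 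Your sharper difference bound $1/(2\sqrt i)$ and the cross-term telescoping would in principle give a tighter estimate, but the ``careful bookkeeping'' you flag is genuinely delicate (the endpoint and cross-term contributions have to fit under the remaining $\tfrac14(\log(en_1)+\log(en_2))$ budget), whereas the paper's two-line argument sidesteps the issue entirely and produces the advertised $\tfrac12$ without any side condition on $n$.
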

 
\begin{proof}
 We say that the interpolating matrix $w$ has product structure if it is of the form $w (j,k) = w_1(j) w_2(k)$ for all $
(j,k)\in [2:n_1]\times [2:n_2]$. Clearly, if it has this structure, then 
$$ ( \Delta w)_{j,k} = (D_1w_1)_j (D_2 w_2 )_k . $$

We examine now a prototype rectangle
$[-d_1^-:d_1^+]\times [-d_2^-:d_2^+] $. 
Consider the four rectangles
\begin{align*}
R^{--}&:=  [-d_1^-:0]\times [-d_2^-:0], & R^{-+}&:=  [-d_1^-:0]\times [0:d_2^+], \\
R^{+-}&:=  [0:d_1^+]\times [-d_2^-:0] , & R^{++}&:=  [0:d_1^+]\times [0:d_2^+] , \\
\end{align*}
and let $R := R^{--} \cup R^{-+} \cup R^{+-} \cup R^{++} =[-d_1^-:d_1^+]\times [-d_2^-:d_2^+] $. Thus $R$ is a rectangle surrounding the origin $(0,0)$. The prototype rectangle $R$ is illustrated in Figure \ref{fig.3}.

\begin{figure}[h]
\centering
\begin{tikzpicture}
\draw (0,0) rectangle (10/2,10/2);
\draw (0,5/2) -- (10/2,5/2);
\draw (6/2,0) -- (6/2,10/2);

\draw[pattern=north west lines] (0,0) rectangle (6/2,5/2);
\draw[pattern=north west lines] (6/2,5/2) rectangle (10/2,10/2);
\draw[pattern=north east lines] (0,5/2) rectangle (6/2,10/2);
\draw[pattern=north east lines] (6/2,0) rectangle (10/2,5/2);

\node[fill=white,rounded corners=2pt] at (2.8/2,2.4/2) {$R^{+,-}$};
\node[fill=white,rounded corners=2pt] at (2.8/2,7.4/2) {$R^{-,-}$};
\node[fill=white,rounded corners=2pt] at (8/2,7.4/2) {$R^{-,+}$};
\node[fill=white,rounded corners=2pt] at (8/2,2.4/2) {$R^{+,+}$};

\node[circle,inner sep=2pt,fill=black,label={[shift={(-.85,-.7)}]$(d_{1}^+,-d_{2}^-)$}] at (0,0) {};
\node[circle,inner sep=2pt,fill=black,label={[shift={(.75,-.7)}]$(d_{1}^+,d_{2}^+)$}] at (10/2,0) {};
\node[circle,inner sep=2pt,fill=black,label={[shift={(0.9,-.2)}]$(-d_{1}^-,d_{2}^+)$}] at (10/2,10/2) {};
\node[circle,inner sep=2pt,fill=black,label={[shift={(-1,-.2)}]$(-d_{1}^-,-d_{2}^-)$}] at (0,10/2) {};

\node[circle,inner sep=2pt,fill=black,label={[shift={(.8/2,0)},fill=white]$(0,0)$}] at (6/2,5/2) {};

\draw [decorate,decoration={brace,amplitude=10pt},xshift=-4pt,yshift=0pt] (0,0.05) -- (0,5/2-.05) node [black,midway,xshift=-0.8cm] { $d_{1}^+$};
\draw [decorate,decoration={brace,amplitude=10pt},xshift=-4pt,yshift=0pt] (0,5/2+.05) -- (0,10/2-.05) node [black,midway,xshift=-0.8cm] { $d_{1}^-$};
\draw [decorate,decoration={brace,amplitude=10pt,mirror},yshift=-4pt,xshift=0pt] (0+.05,0) -- (6/2-.05,0) node [black,midway,yshift=-0.8cm] { $d_{2}^-$};
\draw [decorate,decoration={brace,amplitude=10pt,mirror},yshift=-4pt,xshift=0pt] (6/2+.05,0) -- (10/2-.05,0) node [black,midway,yshift=-0.8cm] { $d_{2}^+$};
\end{tikzpicture}
\caption{Illustration of the prototype rectangle $R$ used in the Proof of Lemma \ref{Gamma.lemma}.}\label{fig.3}
\end{figure}
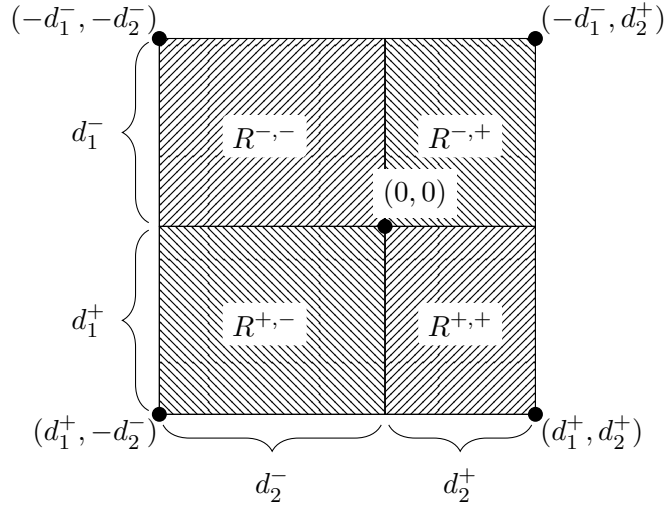

For $z_1, \ z_2 \in \{+,-\}$, and $(j,k) \in R^{z_1 z_2}$, take
\begin{equation*}
 { w}_{j,k} : = {1 \over 2} \left (1- \sqrt {|j| \over d_1^{z_1}} \right )\left  (1-  {|k|\over  d_2^{z_2} }\right ) +  {1 \over 2} \left (1-  {|j|\over  d_1^{z_1}} \right )\left  (1- \sqrt {|k|\over  d_2^{z_2} }\right ).
\end{equation*}
Then ${w}_{0,0} =1$ and ${ w}_{j,k} =0 $ for all $(j,k) $ at the border of $R$.

Because $R^{--} $, $R^{-+}$, $R^{++} $  and $R^{+-}$ are rectangles aligned
with the coordinate axes,  $w $ is the sum of two terms with product structure. 
We see that
$$| \Delta { w}_{j,k} | \le  {1 \over 2} {1 \over \sqrt { d_1^{z_1} |j| } } {1 \over d_2^{z_2}}  +
{1 \over 2} {1 \over { d_1^{z_1}  } } {1 \over \sqrt { d_2^{z_2} |k| }},   (j,k)\in R^{z_1 z_2}.
$$
Invoking the inequality $(a+b)^2 \le 2a^2 + 2 b^2 $ for real numbers $a$ and $b$, we conclude that
\begin{eqnarray*}
\sum_{(j,k) \in R} \biggl ( \Delta { w}_{j,k} \biggr )^2 & \le & 
\sum_{(z_1,z_2)\in \{+,-\}^2}\left( {1 \over 2} { 1 \over d_1^{z_1} (d_2^{z_2})^2 } \sum_{j=1}^{d_1^{z_1} } \sum_{k=1}^{d_2^{z_2}} { 1 \over j} + 
{1 \over 2} { 1 \over (d_1^{z_1} )^2 d_2^{z_2} } \sum_{j=1}^{d_1^{z_1} } \sum_{k=1}^{d_2^{z_2}} { 1 \over k} \right) 
 \\ 
&\le & {1 \over 2} { 1 \over d_1^- d_2^- } \left( \log (e d_1^-) + \log (e d_2^+) \right )+ {1 \over 2} { 1 \over d_1^- d_2^+ } \left( \log (e d_1^-) + \log (e d_2^-) \right ) \\
&&+  {1 \over 2} { 1 \over d_1^+ d_2^+ } \left ( \log (e d_1^+) + \log (e d_2^+) \right ) +{1 \over 2} { 1 \over d_1^+ d_2^- } \left ( \log (e d_1^+) + \log (e d_2^-) \right ).
\end{eqnarray*}

The interpolating matrices $w(q_S)$ given by (\ref{q.equation}) are of the above form 
on each   $R_m$. Moreover, they are equal to
zero on their borders. 
The final result follows from glueing the $\{ R_m \}_{m=1}^s$ together.
\end{proof}

\subsection {Dealing with the noise}\label{noise.section}

We start with an auxiliary lemma, which will be  used to find a convenient formula for the interpolating matrix $w$ based on the noise weights $v$. Both $w$ and $v$ are informally added to the statement of the lemma under the terms to which they  correspond in its application.

\begin{lemma}[Auxiliary lemma] \label{auxiliary.lemma} For all $(x,y) \in [0,1]^2 $
$$\underbrace{ \frac{(1- \sqrt x) (1- y) +  ( 1- x) (1-\sqrt y)}{2} )}_{``w"}  \le 1 - \underbrace{  \frac{\sqrt x+ \sqrt  y}{2} }_{``{v}"} $$
\end{lemma}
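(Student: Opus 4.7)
The plan is to reduce this to a polynomial inequality by the substitution $a := \sqrt{x}$, $b := \sqrt{y}$, so that $a, b \in [0,1]$ and both sides become polynomial expressions in $a, b$. The right-hand side becomes $\tfrac{2-a-b}{2}$ directly, and the left-hand side admits the pleasant factorization
\[
\frac{(1-a)(1-b^2) + (1-a^2)(1-b)}{2} = \frac{(1-a)(1-b)\bigl((1+b)+(1+a)\bigr)}{2} = \frac{(1-a)(1-b)(2+a+b)}{2},
\]
using $1-a^2 = (1-a)(1+a)$ and the analogous identity for $b$.

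With this factorization, the claim is equivalent to $(1-a)(1-b)(2+a+b) \le 2-a-b$. I would expand the left-hand side to
\[
(1-a-b+ab)(2+a+b) = 2 - a - b - a^2 - b^2 + a^2 b + ab^2,
\]
so that after cancellation the inequality reduces to $a^2 b + ab^2 \le a^2 + b^2$.

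This last inequality is immediate from the constraint $a, b \in [0,1]$: since $b \le 1$ we have $a^2 b \le a^2$, and since $a \le 1$ we have $ab^2 \le b^2$; adding these gives the desired bound. There is no real obstacle here — the whole argument is a short algebraic manipulation, and the only mildly clever step is recognizing the common factor $(1-a)(1-b)$ on the left-hand side after the square-root substitution, which is what makes the inequality collapse to something trivial.
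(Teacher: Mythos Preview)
Your proof is correct. Both your argument and the paper's reduce to the same non-negative remainder: in your variables, the final inequality $a^2b+ab^2\le a^2+b^2$ is exactly $x(1-\sqrt y)+y(1-\sqrt x)\ge 0$, which is precisely the pair of terms the paper isolates and drops. The difference is purely organizational. The paper expands $1-\tfrac12(1-\sqrt x)(1-y)-\tfrac12(1-x)(1-\sqrt y)$ directly and reads off
\[
\tfrac12\sqrt x+\tfrac12\sqrt y+\tfrac12(1-\sqrt x)y+\tfrac12 x(1-\sqrt y)\ge \tfrac12(\sqrt x+\sqrt y),
\]
in one line, without any substitution or factorization. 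Your route via $a=\sqrt x$, $b=\sqrt y$ and the factorization $(1-a)(1-b)(2+a+b)$ is a bit more structured and reveals the symmetry of the left-hand side nicely, at the cost of an extra expansion step; the paper's version is shorter but less illuminating. Neither approach has any real advantage beyond taste.
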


\begin{proof}
See Appendix \ref{auxiliary.lemma.proof}.
\end{proof}

The next lemma will be used to obtain the inverse scaling factor ${\tilde \gamma}$ and the noise weights $v$ from the bound on the antiprojections ${\tilde v}$. Here too ${\tilde v}$, $v$ and ${\tilde \gamma}$ are added below the terms to which they correspond in the application of the lemma.

\begin{lemma}[Finding noise weights] \label{boundweights.lemma} For any $((t_1, t_2), (d_1, d_2)) \in \mathbb{N}^4 $, for $j \in [ t _1:t_1+d_1]$ and $k \in  [t_2 : t_2+ d_2]$,
$$ \underbrace{\sqrt {{j-t_1 \over n_1} +{ k-t_2\over n_2}}}_{``{\tilde v}"} \le 
 \underbrace{\biggl ( \sqrt { {j-t_1 \over d_1}} + \sqrt {{k-t_2 \over d_2} }  \biggr )}_{``2v"} \underbrace{\sqrt{{ d_1 \over n_1} + {d_2 \over n_2} }}_{``{\tilde \gamma}/2"} .$$
\end{lemma}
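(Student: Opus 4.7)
The plan is to reduce the inequality to a dimensionless form with a clean substitution, and then split it into two easy sub-inequalities.

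First, I would let $a := (j-t_1)/d_1 \in [0,1]$, $b := (k-t_2)/d_2 \in [0,1]$, and set $\alpha := d_1/n_1$, $\beta := d_2/n_2$. Then $(j-t_1)/n_1 = a\alpha$ and $(k-t_2)/n_2 = b\beta$, so the claim is exactly
\[
\sqrt{a\alpha + b\beta} \ \le\ \bigl(\sqrt{a}+\sqrt{b}\bigr)\sqrt{\alpha+\beta}.
\]
This looks like a Cauchy--Schwarz-type inequality and will be proved by inserting an intermediate quantity.

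Second, I would prove the auxiliary bound $\sqrt{a\alpha + b\beta} \le \sqrt{(a+b)(\alpha+\beta)}$. Squaring both sides, this is equivalent to $a\alpha + b\beta \le (a+b)(\alpha+\beta) = a\alpha + b\beta + a\beta + b\alpha$, i.e.\ to $a\beta + b\alpha \ge 0$, which holds since all four quantities are nonnegative. This is the ``mixing'' step that pulls the two coordinates apart.

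Third, I would bound $\sqrt{a+b} \le \sqrt{a}+\sqrt{b}$ by squaring: $a+b \le a + 2\sqrt{ab} + b$. Chaining the two sub-inequalities gives
\[
\sqrt{a\alpha + b\beta}\ \le\ \sqrt{a+b}\,\sqrt{\alpha+\beta}\ \le\ \bigl(\sqrt{a}+\sqrt{b}\bigr)\sqrt{\alpha+\beta},
\]
which is the desired statement after undoing the substitution. There is no real obstacle in this proof: both ingredients are elementary, and the only mild care is in the substitution step to check that $a,b\in[0,1]$ (so that the square roots on the right are real and well-defined) and that all the quantities under the square roots are nonnegative.
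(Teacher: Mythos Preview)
Your proof is correct and follows essentially the same two-step elementary route as the paper. The only cosmetic difference is the order of the two ingredients: the paper first applies $\sqrt{x+y}\le\sqrt{x}+\sqrt{y}$ to split the left-hand side and then uses $\sqrt{\alpha}\le\sqrt{\alpha+\beta}$, whereas you first bound $a\alpha+b\beta\le(a+b)(\alpha+\beta)$ and apply the square-root subadditivity afterwards.
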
 

\begin{proof}
See Appendix \ref{boundweights.lemma.proof}.
\end{proof}


 
By Lemma \ref{2d-appB-l02}, we  can  bound the antiprojections using the distance of the inactive variables $\{\tilde \psi^{j,k} \}_{(j,k) \notin S } $
from the linear space spanned by the active ones ($\{ \tilde \psi^{j,k} \}_{(j,k) \in S } $).
As a consequence
of the next lemma, we may also
look at the original variables ${\psi^{j,k}}_{(j,k)\in [n_1]\times [n_2]}$ instead.

\begin{lemma}[Projections] \label{projections.lemma} Consider the linear spaces ${\cal U} = {\rm span}(\{ u_j\} ) $ and ${\cal W}$.
Define the linear space $\tilde {\cal U}$ as $\tilde {\cal U} :={\rm span} ( \{ {\tilde u_j}  \} )$, where $\tilde u_j = {\rm P}_{\cal W} u_j$ for all
$j$.
For any $z$ define $\tilde z = {\rm P}_{\cal W} z $.
Then $\|  \tilde z - {\rm P}_{\tilde {\cal U} } \tilde z \|_2  \le \| z - {\rm P}_{\cal U} z \|_2 $.

\end{lemma}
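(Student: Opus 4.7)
The plan is to use two elementary facts about orthogonal projections: (i) $P_{\tilde{\mathcal{U}}}\tilde z$ is the best approximation of $\tilde z$ inside $\tilde{\mathcal{U}}$, so it dominates any specific candidate in $\tilde{\mathcal{U}}$; and (ii) orthogonal projections are nonexpansive, i.e.\ $\|P_{\mathcal W}v\|_2\le \|v\|_2$ for all $v$.

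First I would construct an explicit competitor in $\tilde{\mathcal U}$. Write $P_{\mathcal U}z=\sum_j \alpha_j u_j$ for some coefficients $\alpha_j$; then applying $P_{\mathcal W}$ linearly gives
\[
P_{\mathcal W}P_{\mathcal U}z = \sum_j \alpha_j P_{\mathcal W}u_j = \sum_j \alpha_j \tilde u_j \in \tilde{\mathcal U}.
\]
By the projection characterization (i),
\[
\|\tilde z - P_{\tilde{\mathcal U}}\tilde z\|_2 \le \|\tilde z - P_{\mathcal W}P_{\mathcal U}z\|_2 = \|P_{\mathcal W}z - P_{\mathcal W}P_{\mathcal U}z\|_2 = \|P_{\mathcal W}(z - P_{\mathcal U}z)\|_2,
\]
using linearity of $P_{\mathcal W}$ and the definition $\tilde z = P_{\mathcal W}z$.

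Finally, applying nonexpansiveness (ii) to $v = z - P_{\mathcal U}z$ yields
\[
\|P_{\mathcal W}(z-P_{\mathcal U}z)\|_2 \le \|z - P_{\mathcal U}z\|_2,
\]
and chaining the two bounds completes the proof. I do not anticipate a real obstacle here; the only subtle point is observing that membership of $P_{\mathcal W}P_{\mathcal U}z$ in $\tilde{\mathcal U}$ follows because the generating set of $\tilde{\mathcal U}$ is exactly the image under $P_{\mathcal W}$ of the generating set of $\mathcal U$, which is why linearity of $P_{\mathcal W}$ is essential.
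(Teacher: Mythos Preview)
Your proof is correct and follows essentially the same route as the paper's: write $P_{\mathcal U}z=\sum_j\alpha_j u_j$, observe that its image under $P_{\mathcal W}$ lies in $\tilde{\mathcal U}$ and hence serves as a competitor to $P_{\tilde{\mathcal U}}\tilde z$, and then apply nonexpansiveness of $P_{\mathcal W}$. The paper's argument is identical up to the order in which the two inequalities are stated.
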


\begin{proof}
See Appendix \ref{projections.lemma.proof}
\end{proof}

In the next lemma we  bound the antiprojections using the distance of the original inactive variables $\{ \psi^{j,k} \}_{(j,k) \notin S } $
from the linear space spanned by the active ones ($\{ \psi^{j,k} \}_{(j,k) \in S } $).

Let $ {\cal U} := {\rm span} \left (  \{  \psi^{t_m} \}_{m=1}^s \right )$.

\begin{lemma}[Finding a bound on the antiprojections]\label{antiprojections.lemma} For $m \in [s] $ and all $(j,k) \in R_m$

$$ \|  {\rm A}_{\cal U} \psi^{j,k}  \|_2^2 / n \le \frac{\abs{j-t_{1,m}}}{n_1}+ \frac{\abs{k-t_{2,m}}}{n_2}.$$
\end{lemma}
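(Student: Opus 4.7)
\textbf{Proof plan for Lemma \ref{antiprojections.lemma}.}

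Since ${\rm A}_{\cal U}$ is an orthogonal antiprojection, it realizes the minimum distance to ${\cal U}$, so for any $u \in {\cal U}$ we have $\|{\rm A}_{\cal U} \psi^{j,k}\|_2 \le \|\psi^{j,k} - u\|_2$. The plan is to simply pick $u = \psi^{t_m} \in {\cal U}$ (which is natural since $(j,k)$ lies in the rectangle $R_m$ around $t_m$) and estimate the resulting squared distance directly by combinatorial bookkeeping.

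Let me set $A := \{(a,b) : a \ge t_{1,m},\, b \ge t_{2,m}\}$ and $B := \{(a,b) : a \ge j,\, b \ge k\}$, the supports of $\psi^{t_m}$ and $\psi^{j,k}$ respectively. Because both matrices are $\{0,1\}$-valued, the identity $(\mathbb{1}_A - \mathbb{1}_B)^2 = \mathbb{1}_{A \triangle B}$ immediately yields
$$ \|\psi^{j,k} - \psi^{t_m}\|_2^2 \;=\; |A \triangle B|. $$
So everything reduces to bounding the cardinality of the symmetric difference of two co-axial ``upper-right'' rectangles.

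The central geometric observation is that $A \triangle B$ is contained in the union of a vertical strip and a horizontal strip:
$$ A \triangle B \;\subseteq\; \bigl([\min(j,t_{1,m}), \max(j,t_{1,m})) \times [n_2]\bigr) \,\cup\, \bigl([n_1] \times [\min(k,t_{2,m}), \max(k,t_{2,m}))\bigr). $$
To see this, take any $(a,b) \in A \setminus B$: then $a \ge t_{1,m}$ and $b \ge t_{2,m}$, while either $a < j$ or $b < k$; in the first case $a$ lies between $t_{1,m}$ and $j$, in the second $b$ lies between $t_{2,m}$ and $k$. The case $(a,b) \in B \setminus A$ is symmetric. Counting then gives
$$ |A \triangle B| \;\le\; |j - t_{1,m}|\, n_2 \,+\, n_1\, |k - t_{2,m}|, $$
and dividing by $n = n_1 n_2$ delivers the claimed bound.

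The only point that requires any attention is step three, the inclusion in the two strips; but since the rectangles share the corner $(n_1,n_2)$ and both extend to infinity in that direction, this is a short case check (basically four cases for the signs of $j - t_{1,m}$ and $k - t_{2,m}$). No optimization over more elaborate elements of ${\cal U}$ is needed, because taking $u = \psi^{t_m}$ already saturates the order of the bound.
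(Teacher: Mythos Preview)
Your proof is correct and follows exactly the paper's approach: bound the antiprojection by $\|\psi^{j,k}-\psi^{t_m}\|_2^2$ and then observe that this squared distance is at most $|j-t_{1,m}|\,n_2 + n_1\,|k-t_{2,m}|$. The paper states this in one line without the symmetric-difference bookkeeping you spell out, but the argument is the same.
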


\begin{proof}
See Appendix \ref{antiprojections.lemma.proof}.
\end{proof}

\subsection{Proof of Theorem \ref{main.theorem}}\label{proof.main.thm.section}

Theorem \ref{main.theorem} follows from the results of Subsections \ref{interpolating.section}-\ref{noise.section} combined with Theorem \ref{2d-appB-thm3}.

We thus need to find suitable $\tilde{v}, \ \tilde{\gamma}, \ v, \ w$ and an upper bound on the effective sparsity.

Let $S\subseteq [3:n_1-1]\times [3:n_2-1]$ be arbitrary.
Let
$$ \mathcal{U}:= \text{span}\left(\{\psi^{t_m}\}_{m \in [s]} \right), \ \tilde{\mathcal{U}}:= \text{span}\left(\{\tilde{\psi}^{t_m}\}_{m \in [s]} \right)$$
 and
 $$ \mathcal{W}:= \text{span}\left( \{\psi^{j,k}\}_{(j,k)\in \{1\}\times[n_2] \cup [n_1]\times \{1\}} \right).$$
 Note that $\norm{\text{A}_{\tilde{\mathcal{U}}}\tilde{\psi}^{j,k}}_2/ \sqrt{n}$ is the same quantity as $\norm{({\rm I}_n-{\rm P}_{S}) {\tilde \Psi}_i}^2_2/ \sqrt{n}$ found in Definition \ref{def.bound.antiproj}.

\begin{itemize}
\item By Lemma \ref{projections.lemma} we have that $ \norm{\text{A}_{\tilde{\mathcal{U}}}\tilde{\psi}^{j,k}}_2/ \sqrt{n}=\norm{\text{A}_{\tilde{\mathcal{U}}}\tilde{\psi}^{j,k}}_2/ \sqrt{n} \le \norm{\text{A}_{{\mathcal{U}}}{\psi}^{j,k}}_2/ \sqrt{n}$, since $\tilde{\psi}^{j,k}= \text{P}_{\mathcal{W}} \psi^{j,k}$, $(j,k) \in [2:n_1] \times [2:n_2]$.
\item Upper bounds on the values of $\norm{\text{A}_{{\mathcal{U}}}{\psi}^{j,k}}_2/ \sqrt{n}$ are given by Lemma \ref{antiprojections.lemma}. These upper bounds yield a bound on the antiprojections $\tilde v$ and the corresponding inverse scaling factor $\tilde \gamma$.
\item By applying Lemma \ref{boundweights.lemma} and Lemma \ref{auxiliary.lemma} to the results of Lemma \ref{antiprojections.lemma} we see that for $v$ as in \eqref{weights.equation} we have $ v_{j,k} \ge \tilde{v}_{j,k} / \tilde{\gamma}, \ \forall (j,k) \in [2:n_1]\times [2:n_2]$, where 
$$ \tilde{\gamma}= 2 \sqrt{\frac{d_{1, \max}(S)}{n_1}+\frac{d_{2, \max}(S)}{n_2}}$$
and $v$ reaches its maximal values of $1$ on the boundaries of the rectangles $\{R_m\}_{m=1}^s$.
\item Therefore $w(q_S)$ given in \eqref{q.equation} is an interpolating matrix for the sign configuration $q_S$.
\end{itemize}

Lemma \ref{Gamma.lemma} bounds the effective sparsity by using the interpolating matrix $w(q_S)$ given in \eqref{q.equation}. \hfill  $\blacksquare$
\section{Slow rates for image denoising}\label{TV2D-JMLR-Section8}

The objective for this section is to prove a slow rate for $\hat{\tilde{f}}$. It will turn out that this rate is $n^{-5/8}$, up to log terms, and is faster than the rate $n^{-3/5}$  in \cite{mamm97-2}.

We combine the standard Theorem \ref{2d-appB-thm4} with the insight that, if the arbitrary active set $S$ is chosen in a careful way, we can obtain a value of ${\tilde \gamma}$ which is smaller than the one obtained in Section \ref{TV2D-JMLR-Section7}. The key idea is that an active set defining a ``mesh grid'' (see Definition \ref{def.mesh.grid} and Figure \ref{fig.mesh.grid}) results in a more favorable ${\tilde \gamma}$ than an active set $S$ defining a regular grid.

We consider the two-dimensional grid $[2:n_1] \times [2:n_2]$.
Let $t_1, t_2 \in \mathbb{N}$. Assume that $ {n_1}/{(t_1^2+1)}$ and ${n_2}/{(t_2^2+1)}$ are integers.

We define
\begin{eqnarray*}
M_1&:=& \left\{1+\frac{n_1}{t_1^2+1}, 1+\frac{2n_1}{t_1^2+1}, \ldots, 1+\frac{t^2_1n_1}{t_1^2+1}  \right\}\\
N_1&:=& \left\{ 1+\frac{\lceil t_1/2 \rceil n_1}{t_1^2+1},  1+\frac{(\lceil t_1/2 \rceil+t_1)n_1}{t_1^2+1}, \ldots, 1+\frac{(\lceil t_1/2 \rceil+t_1(t_1-1))n_1}{t_1^2+1} \right\}\\
M_2 &:=& \left\{ 1+\frac{n_2}{t_2^2+1}, 1+\frac{2n_2}{t_2^2+1}, \ldots, 1+\frac{t_2^2n_2}{t_2^2+1} \right\}  \\
N_2 &:=&  \left\{ 1+\frac{\lceil t_2/2 \rceil n_2}{t_2^2+1},  1+\frac{(\lceil t_2/2 \rceil+t_2)n_2}{t_2^2+1}, \ldots, 1+\frac{(\lceil t_2/2 \rceil+t_2(t_2-1))n_2}{t_2^2+1} \right\}
\end{eqnarray*}

\begin{definition}[Mesh grid]\label{def.mesh.grid}
An active set $S_M \subseteq [2:n_1] \times [2:n_2]$ is said to define a mesh grid if $ S_M:= M_1 \times N_2 \cup N_1 \times M_2$.

We define the set of the nodes $S_N$ of the mesh grid $S_M$ as $ S_N:= N_1 \times N_2$.
\end{definition}

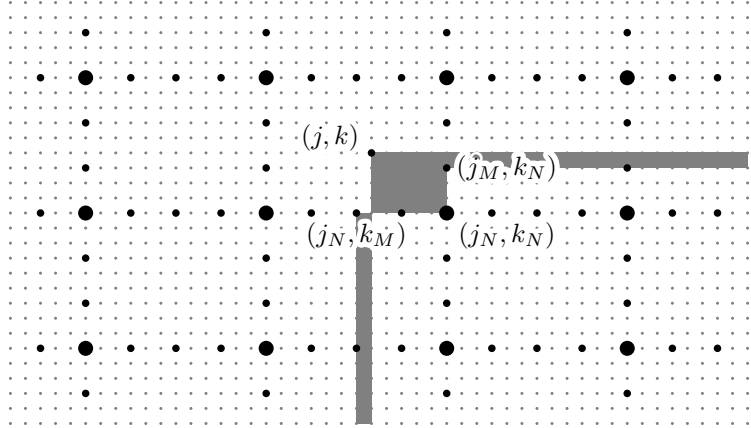
\begin{figure}[h]
\centering
\begin{tikzpicture}

\fill[gray]  (4.6,0) rectangle (4.8,2.8);
\fill[gray]  (5.8,3.4) rectangle (9.8,3.6);
\fill[gray]  (4.8,02.8) rectangle (5.8,3.6);

    \foreach \x in {0,.2,...,9.8} {
        \foreach \y in {0,.2,..., 5.6} {
            \fill[color=gray] (\x,\y) circle (0.02);
        }
    }
\foreach \x in {1,3.4,5.8,8.2} {
        \foreach \y in {0.4, 1.0, 1.6, 2.2, 2.8, 3.4, 4.0, 4.6, 5.2} {
            \fill[color=black] (\x,\y) circle (0.05);
        }
    }
\foreach \x in {0.4, 1.0, 1.6, 2.2, 2.8, 3.4, 4.0, 4.6, 5.2, 5.8, 6.4, 7.0, 7.6, 8.2, 8.8, 9.4} {
        \foreach \y in {1.0,2.8,4.6} {
            \fill[color=black] (\x,\y) circle (0.05);
        }
    }
    
\foreach \x in {1,3.4,5.8,8.2} {
        \foreach \y in {1.0,2.8,4.6} {
            \fill[color=black] (\x,\y) circle (0.1);
        }
    }
\fill[color=black] (4.8,3.6) circle (0.05);
\node at (4.25,3.8) {\contour{white}{\small $(j,k)$}};

\node at (4.6,2.5) {\contour{white}{\small $(j_N,k_M)$}};
\node at (6.6,3.4) {\contour{white}{\small $(j_M,k_N)$}};
\node at (6.6,2.5) {\contour{white}{\small $(j_N,k_N)$}};

\end{tikzpicture}
\caption{Illustration of the two-dimensional grid $[2:30] \times [2:51]$. The black points represent a mesh grid $S_M$ with $t_1=3$ and $t_2=4$. The thicker points belong to the set of nodes $S_N$. The grey area corresponds to the error made when approximating $\psi^{j,k}$ by a linear combination of $\psi^{j_N,k_N}, \psi^{j_N,k_M}, \psi^{j_M,k_N}\in \mathcal{M}$  in the Proof of Lemma \ref{mesh.antiproj}.}\label{fig.mesh.grid}
\end{figure}

An example of a mesh grid is illustrated in Figure \ref{fig.mesh.grid}. If $S_M$ is a mesh grid, then $ s_M=t_1^2t_2 + t_1 t_2^2 -t_1t_2= t_1t_2(t_1+t_2-1)$.
Note that we can add $t_1t_2$ points to the mesh grid $S_M$ to obtain an active set of cardinality $t_1t_2(t_1+t_2)$. The antiprojections do not become larger. With some abuse of notation we write from now on $s_M= t_1t_2(t_1+t_2)$.

The next lemma is an analogon of Lemma \ref{antiprojections.lemma}, but the active set is constrained to take the form of a mesh grid. This allows us to find a more favorable uniform bound for the antiprojections, which is smaller than the one we would have if the active set were a regular grid like $S_N$ instead of a mesh grid.

Let $S_M$ be a mesh grid and let $\mathcal{M}:= {\rm span}\left(\{\psi^{j,k}\}_{(j,k)\in S_M} \right)$.

\begin{lemma}[Finding a bound on the antiprojections when $S$ is a mesh grid]\label{mesh.antiproj}
For all $(j,k) \in [2:n_1] \times [2:n_2]$ it holds that
$$ \norm{{\rm A}_{\mathcal{M}}\psi^{j,k}}^2_2/n \le \frac{1}{t_1^2} + \frac{1}{t_2^2} + \frac{\lceil t_1/2 \rceil \lceil t_2/2 \rceil}{t_1^2 t_2^2}.$$
\end{lemma}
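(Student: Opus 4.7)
The plan is to exhibit an explicit approximant $g\in\mathcal{M}$ of $\psi^{j,k}$ and use $\|{\rm A}_{\mathcal{M}}\psi^{j,k}\|_2 \le \|\psi^{j,k}-g\|_2$. For each $(j,k)$, I will let $(j_M,k_M)$ be the smallest $M_1\times M_2$ grid point componentwise $\ge (j,k)$, and let $(j_N,k_N)$ be the nearest $N_1\times N_2$ node to $(j_M,k_M)$. I will then take
\[ g := \psi^{j_M,k_N} + \psi^{j_N,k_M} - \psi^{j_N,k_N}, \]
which lies in $\mathcal{M}$ because $M_1\times N_2\subseteq S_M$, $N_1\times M_2\subseteq S_M$, and $N_1\times N_2\subseteq N_1\times M_2$.

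The key calculation is a two-step decomposition
\[ \psi^{j,k}-g = (\psi^{j,k}-\psi^{j_M,k_M}) + (\psi^{j_M,k_M}-g). \]
Since $j_M\ge j$ and $k_M\ge k$, the first piece is the indicator of the L-shape $L=\{j\le j'<j_M,\ k'\ge k\}\cup\{j'\ge j_M,\ k\le k'<k_M\}$. The second piece, via the factorization
\[ \bigl(1_{\{j'\ge j_M\}}-1_{\{j'\ge j_N\}}\bigr)\bigl(1_{\{k'\ge k_M\}}-1_{\{k'\ge k_N\}}\bigr)=\psi^{j_M,k_M}-\psi^{j_M,k_N}-\psi^{j_N,k_M}+\psi^{j_N,k_N}, \]
equals a signed indicator of the rectangular box $B$ with corners $(j_M,k_M)$ and $(j_N,k_N)$. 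In the favorable case $j_N\ge j_M$, $k_N\ge k_M$, the supports of $L$ and $B$ are disjoint (the $j'$-coordinate separates the first L-leg from the other two pieces; the $k'$-coordinate separates the second L-leg from $B$), so Pythagoras gives the clean sum
\[ \|\psi^{j,k}-g\|_2^2 = (j_M-j)(n_2-k+1) + (n_1-j_M+1)(k_M-k) + (j_N-j_M)(k_N-k_M), \]
which corresponds exactly to the three grey rectangular regions (thin vertical strip, thin horizontal strip, small box) of Figure~\ref{fig.mesh.grid}. Substituting the spacing bounds $j_M-j<n_1/(t_1^2+1)$, $k_M-k<n_2/(t_2^2+1)$, $|j_N-j_M|\le\lceil t_1/2\rceil n_1/(t_1^2+1)$, $|k_N-k_M|\le\lceil t_2/2\rceil n_2/(t_2^2+1)$ (the last two from the fact that consecutive $N_1$ points lie $t_1$ $M_1$-spacings apart, so the nearest $N_1$ point to any $j_M$ is within $\lceil t_1/2\rceil$ such spacings), then dividing by $n=n_1 n_2$, yields the stated bound.

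The main obstacle is the unfavorable case where the nearest $N$-node lies on the inward side of $(j_M,k_M)$, i.e., $j_N<j_M$ or $k_N<k_M$, so that $L$ and $B$ may overlap. I plan to handle this with a sign analysis on the box term: when exactly one of the two coordinates is flipped, the signs of $L$ and $B$ disagree on $L\cap B$ so that cancellations only reduce $\|\psi^{j,k}-g\|_2^2$ below $|L|+|B|$; when both are flipped, signs agree and $\|\psi^{j,k}-g\|_2^2=|L|+|B|+2|L\cap B|$, but the direct estimate $|L\cap B|\le(j_M-j)(k_M-k)\le n/((t_1^2+1)(t_2^2+1))$ combined with the strict gaps $1/(t_i^2+1)<1/t_i^2$ still delivers the claimed bound (with a margin that tightens as $t_1,t_2$ grow but remains positive).
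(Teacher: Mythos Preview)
Your approach is essentially the paper's: the same approximant $g=\psi^{j_M,k_N}+\psi^{j_N,k_M}-\psi^{j_N,k_N}\in\mathcal{M}$, the same decomposition into two thin strips plus a small box (this is exactly the grey region of Figure~\ref{fig.mesh.grid}), and the same spacing bounds. The paper defines all four of $j_M,k_M,j_N,k_N$ as \emph{nearest} points to $(j,k)$ in $M_1,M_2,N_1,N_2$ respectively (not one-sided, and not nearest to $(j_M,k_M)$), and then simply asserts the area bound
\[
\|\psi^{j,k}-g\|_2^2\le j|k-k_M|+k|j-j_M|+|j_N-j_M||k_N-k_M|
\]
without any sign or overlap discussion. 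Two remarks: (i) your one-sided choice of $(j_M,k_M)$ has a boundary defect---when $j$ lies to the right of the last $M_1$ point there is no $M_1$ element $\ge j$---which the paper's two-sided ``nearest'' definition avoids; (ii) your careful case analysis of the overlap between $L$ and $B$ is more rigorous than what the paper writes down, and your absorption of the extra $2|L\cap B|$ term into the slack $1/t_i^2-1/(t_i^2+1)$ does go through (via $t_1^4+t_2^4\ge 2t_1^2t_2^2$), though you should state that argument rather than leave it as a parenthetical.
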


\begin{proof} 
Let $(j,k) \in [2:n_1] \times [2:n_2]$ be arbitrary. We define
\begin{align*}
j_M&:= \arg \min_{i \in M_1} \abs{i-j} & j_N&:= \arg\min_{i \in N_1} \abs{i-j}\\
k_M&:= \arg \min_{i \in M_2} \abs{i-k} & k_N&:= \arg\min_{i \in N_2} \abs{i-k}.\\
\end{align*}
We now approximate $\psi^{j,k}$ by a linear combination of $\psi^{j_N,k_N}, \psi^{j_N,k_M}, \psi^{j_M,k_N}\in \mathcal{M}$ (cf. Figure \ref{fig.mesh.grid})  as
\begin{eqnarray*}
\norm{\psi^{j,k}- \psi^{j_M,k_N}- \psi^{j_N,k_M}+ \psi^{j_N,k_N}}^2_2& \le & j \abs{k-k_M} + k \abs{j-j_M} + \abs{j_N-j_M} \abs{k_N-k_M}\\
& \le & \frac{n_1n_2}{t_2^2+1} + \frac{n_1n_2}{t_1^2+1} + \frac{n_1n_2\lceil t_1/2 \rceil \lceil t_2/2 \rceil}{(t_1^2+1)(t_2^2+1)}.
\end{eqnarray*}
\end{proof}

Let us now define the class
$$ \mathcal{F}(C)= \left\{ {\tilde f} \in \R^{n_1 \times n_2}: {\rm TV}({\tilde f}) \le C \right\}, \ C>0.$$

For the ease of exposition, we consider square images ($n_1=n_2$) and we choose $t_1=t_2=: t$ to be even. The following theorem holds.

\begin{theorem}[Slow rates for total variation image denoising]\label{main.slow}
The estimator $\hat{\tilde f} $ has the following properties.
\begin{itemize}
\item {\bf Dependence of $S_M$ and $\lambda$ on $f^0$ allowed.} \\
Choose $S_M$ such that
$$ s_M=\left\lceil\frac{2^{5/4}3^{3/4}\log^{3/8}(2n){\rm TV}^{3/4}(f^0)}{\sigma^{3/4}}  \right\rceil \text{ and } \lambda = \frac{3^{3/4}\sigma^{5/4}\log^{3/8}(2n)}{2^{1/12}n^{5/8}{\rm TV}^{1/4}(f^0)}\ge {\tilde \gamma}\lambda_0(\log(2n)). $$
Then, with probability at least $1-1/n$ it holds that
$$ \norm{\hat{\tilde f}-{\tilde f}^0}^2_2/n \le \frac{24 \sigma^{5/4}C^{3/4}\log^{3/8}(2n)}{n^{5/8}} + \frac{4 \sigma^2\log(2n)}{n}+ \frac{2\sigma^2}{n}.$$

\item {\bf Dependence of $S_M$ and $\lambda$ on $f^0$ not allowed.} \\
Choose $S_M$ such that
$$ s_M=\left\lceil2^{5/4}3^{3/4}\log^{3/8}(2n)  \right\rceil \text{ and } \lambda = \frac{3^{3/4}\log^{3/8}(2n)}{2^{1/12}n^{5/8}} \ge {\tilde \gamma}\lambda_0(\log(2n)). $$
Then, with probability at least $1-1/n$ it holds that
$$ \norm{\hat{\tilde f}-{\tilde f}^0}^2_2/n \le \frac{12 \sigma(C+\sigma) \log^{3/8}(2n)}{n^{5/8}} + \frac{4 \sigma^2\log(2n)}{n}+ \frac{2\sigma^2}{n}.$$

\end{itemize}
\end{theorem}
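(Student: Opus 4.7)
The plan is to invoke the slow-rate oracle inequality of Theorem \ref{2d-appB-thm4} with $g = \tilde f^0$ (making the approximation term vanish), $S = S_M$ a mesh grid with $t_1 = t_2 =: \tau$ even, and both confidence parameters equal to $\log(2n)$, which yields an error event of probability at least $1 - 1/n$ and $\lambda_0(\log(2n)) = 2\sigma\sqrt{\log(2n)/n}$. Note that $\|\Delta\tilde f^0\|_1 = \mathrm{TV}(\tilde f^0) = \mathrm{TV}(f^0) \le C$, because the global mean and both main-effect components of the ANOVA decomposition lie in the kernel of $\Delta$, so the penalty term in Theorem \ref{2d-appB-thm4} is bounded by $4\lambda C$.

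The key input is the inverse scaling factor $\tilde\gamma$. Apply Lemma \ref{mesh.antiproj} to bound $\|\mathrm{A}_{\mathcal M}\psi^{j,k}\|_2^2/n$ by $9/(4\tau^2)$ uniformly over $(j,k)$ in the square case. Transfer this to the centered dictionary $\{\tilde\psi^{j,k}\}$ via Lemma \ref{projections.lemma}, obtaining the uniform bound $\tilde v_{j,k} \le 3/(2\tau)$ and hence $\tilde\gamma \le 3/(2\tau)$. Setting $\lambda = \tilde\gamma\lambda_0(\log(2n)) = (3\sigma/\tau)\sqrt{\log(2n)/n}$, using $s_M = 2\tau^3$, and applying $(a+b)^2 \le 2a^2 + 2b^2$ inside the oracle bound, we obtain
$$\|\hat{\tilde f}-\tilde f^0\|_2^2/n \le \frac{12\sigma C}{\tau}\sqrt{\frac{\log(2n)}{n}} + \frac{4\sigma^2\log(2n)}{n} + \frac{4\sigma^2\tau^3}{n}.$$

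For the first bullet, optimize over the mesh parameter $\tau$: balancing the first and third terms by differentiation gives $\tau \asymp (C/\sigma)^{1/4}(n\log(2n))^{1/8}$, at which point both terms reach order $\sigma^{5/4}C^{3/4}n^{-5/8}\log^{3/8}(2n)$, with the explicit constants following after rounding $\tau$ upward so that the divisibility $(\tau^2+1) \,|\, n_1$ and $(\tau^2+1)\,|\, n_2$ holds. For the second bullet, since tuning must not depend on $\mathrm{TV}(f^0)$, choose $\tau$ depending only on $n$ and $\log(2n)$ (effectively fixing $C/\sigma$ to a constant in the formula above), so that both $s_M$ and $\lambda$ are data-independent; the first term then contributes $O(\sigma C n^{-5/8}\log^{3/8}(2n))$ and the third contributes $O(\sigma^2 n^{-5/8}\log^{3/8}(2n))$, combining into the claimed $\sigma(C+\sigma)n^{-5/8}\log^{3/8}(2n)$ bound.

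The essential step, and the source of the improvement over \cite{mamm97-2}, is the mesh-grid construction itself: a regular grid of $\tau\times\tau$ jump locations has sparsity $\tau^2$, and Lemma \ref{antiprojections.lemma} only yields $\tilde\gamma = O(1/\sqrt\tau)$, leading to the slower $n^{-3/5}$ rate; the mesh grid has higher sparsity $s_M = O(\tau^3)$ but interpolates the plane so efficiently that Lemma \ref{mesh.antiproj} upgrades $\tilde\gamma$ to $O(1/\tau)$, which is precisely what shifts the optimum to $n^{-5/8}$. The remaining work is routine bookkeeping: tracking numerical constants through the optimization and verifying that the prescribed $\lambda$ indeed satisfies the constraint $\lambda \ge \tilde\gamma\lambda_0(\log(2n))$ at the chosen $\tau$.
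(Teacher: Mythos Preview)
Your proposal is correct and follows essentially the same route as the paper's proof: start from Theorem \ref{2d-appB-thm4} with $g=\tilde f^0$, $S=S_M$, $x=t=\log(2n)$, use $(\sqrt{2x}+\sqrt{s_M})^2\le 4x+2s_M$, compute $\tilde\gamma=3/(2\tau)=2^{-2/3}\cdot 3\, s_M^{-1/3}$ from Lemma \ref{mesh.antiproj}, and then balance $2\sigma^2 s_M/n$ against $4\lambda\,{\rm TV}(f^0)$ (first bullet) or against a reference value independent of $f^0$ (second bullet). Your write-up is in fact more explicit than the paper's: you spell out the passage from $\psi^{j,k}$ to $\tilde\psi^{j,k}$ via Lemma \ref{projections.lemma}, the numerical value $9/(4\tau^2)$ in the square even case, and the closing comparison with the regular-grid choice that only recovers $n^{-3/5}$, all of which the paper leaves implicit or relegates to a remark.
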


\begin{proof}
Our starting point is Theorem \ref{2d-appB-thm4}. We choose $x=t=\log(2n)$, $g={\tilde f}^0$ and $S=S_M$ and we apply the inequality $(\sqrt{2x}+\sqrt{s_M})^2 \le 4x + 2s_M$. Note that $s_M=2t^3$. By Lemma \ref{mesh.antiproj}, ${\tilde \gamma}=3t/2=2^{-2/3}3 s_M^{-1/3}$.

The two results of the above theorem follow by optimally choosing $s_M$ to trade off $2s_M/n$ and $4 \lambda{\rm TV}(f^0)  $ or $4 \lambda/\sigma$, respectively.
\end{proof}

\begin{remark}[Comments on Theorem \ref{main.slow}]
\begin{itemize}
\item Theorem \ref{main.slow} improves on the rate $n^{-3/5}$ found in \cite{mamm97-2}. If in the proof of Theorem \ref{main.slow} one chooses an active set defining a regular grid and bounds the antiprojections with Lemma \ref{antiprojections.lemma}, then the rate $n^{-3/5}$ by \cite{mamm97-2} is retrieved, up to a $log^{2/5}n$ term.
\item Since we assume $t$ to be an even integer, $s_M$ should be the smallest number, which is twice an even cube and greater than the $s_M$ we propose in the statement of Theorem \ref{main.slow}.
\item In the second part of Theorem \ref{main.slow} the choice of $\lambda$ is completely data-driven, does not depend on ${\tilde f}^0$ and is smaller than the universal choice $\lambda_0(\log(2n))$.
\item Under the assumption that $C>0$ is a constant that does not depend on $n$, the two rates in  Theorem \ref{main.slow} are equal.
\item The results of Theorem \ref{main.slow} are ``constant-friendly'' in the sense that we can trace the constants for the choice of the tuning parameter and for the upper bound on the mean squared error. Moreover, these constants are small. This situation has to be contrasted with results relying on  entropy calculations (for instance \cite{mamm97-2}), which  possibly produce very large constants both in the tuning parameter and in the upper bound.
\end{itemize}
\end{remark}

%

\begin{remark}[Comparision with \cite{fang19}]
\cite{fang19} study the estimator
$$\hat{f}_C:= \arg \min_{f \in K(C)} \norm{Y-f}^2_n,$$
where, for $C>0$, $K(C):=\{f \in \R^{n_1 \times n_2}:  \sum_{j=2}^{n_1} \abs{f_{j,1}-f_{j-1,1}} + \sum_{k=2}^{n_2} \abs{f_{1,k}-f_{1,k-1}} + {\rm TV}(f)\le C\}$. \cite{fang19} show that the minimax rate of estimation for $f^0\in K(C)$ is of order $n^{-2/3}C^{2/3}$ up to log terms and $\hat{f}_C$ attains it. 

With the entropy bound by \cite{blei07} used  in \cite{fang19}, one could prove a similar result for the penalized version of the estimator $\hat{f}_C$, which differs from  our estimator $\hat{f}$, since the one-dimensional total variation is defined in a different way and the tuning parameters are choosen to be $\lambda_1=\lambda_2= \lambda$.

The minimax rate in the class $\bar{K}(C):=  \{ f \in \R^{n_1\times n_2}: {\rm TV}_1(f)\le C, {\rm TV}_2(f)\le C  , {\rm TV}(f)\le C  \}$ follows, by the ANOVA decomposition, from the minimax rate for the class $\mathcal{F}(C)=\{\tilde{f} \in \R^{n_1 \times n_2}: {\rm TV}(\tilde{f})\le C\}$. This minimax rate or a tight bound for the entropy of $\mathcal{F}(C)$ are not clear yet. The class $\bar{K}(C)$ is larger than the class $K(C)$, as for $f \in K(C)$ it holds that ${\rm TV}_1(f)\le C$ and ${\rm TV}_2(f)\le C$.
\end{remark}
\section{Conclusion}\label{TV2D-JMLR-Section9}

We  showed that the estimator for the interaction terms satisfies an oracle inequality with fast rates and can adapt to the number and the locations of the unknown ``jumps'' to optimally trade off approximation and estimation error as if it would know the true image $f^0$.

As in our previous work on one-dimensional (higher-order) total variation regularization, the projection arguments by \cite{dala17} are central to make sure that the effective sparsity is ``small enough'' and thus prove adaptivity. This technique exploits the strong correlation between the atoms constituting the dictionary. It also turns out that the technique to bound the effective sparsity proposed by \cite{vand19} and  inspired by \cite{cand14} can be generalized to the two-dimensional case.

The slow rate $n^{-5/8}\log^{3/8}n$ improves on the rate $n^{-3/5}$ in \cite{mamm97-2}.

Both for fast and slow rates,  the estimator enjoys the most favorable theoretical properties when the tuning parameter is chosen to be smaller than the universal choice of order $\sqrt{\log n/n}$. The most favorable choice of the tuning parameter might depend on some aspects of the true image $f^0$. However we show that there are choices of $\lambda$ which are completely data driven and still confer to the estimator adaptivity (fast rates) or the rate $n^{-5/8}\log^{3/8}n$ (slow rates).


\acks{We would like to acknowledge support for this project
from the the Swiss National Science Foundation (SNF grant 200020\_169011). We moreover thank the associate editor and the referees for the careful reading of the manuscript and for their valuable comments.}


\newpage

\appendix

\section{Proofs of Section \ref{TV2D-JMLR-Section5}}\label{TV2D-JMLR-Section10}

\subsection{Proof of Lemma \ref{expansion.lemma}}\label{proof.expansion.lemma}
We note that, for $(j,k)\in [n_1]\times [n_2]$, the global mean of dictionary atoms is given by $\psi^{j,k}(\circ, \circ)= \psi^{1,k}(\circ,\circ)\psi^{j,1}(\circ,\circ)$, the main effects by $\psi^{j,k}(\cdot, \circ)= \psi^{1,k}(\circ, \circ) \psi^{j,1}-\psi^{j,k}(\circ,\circ) \psi^{1,1}$ and $\psi^{j,k}(\circ, \cdot)= \psi^{j,1}(\circ, \circ) \psi^{1,k}-\psi^{j,k}(\circ,\circ) \psi^{1,1}$.
 Thus
 $$ \psi^{j,k}= \tilde \psi^{j,k}+ \psi^{1,k}(\circ, \circ) \tilde \psi^{j,1}+ \psi^{j,1}(\circ, \circ)\tilde \psi^{1,k} + \psi^{j,k}(\circ,\circ) \psi^{1,1}.$$
 From the definitions of $\tilde \psi^{j,k},\ (j,k)\in [n_1]\times [n_2]$ it follows that
 
 \begin{equation*}
 f= \tilde \beta_{1,1} \tilde \psi^{1,1} + \sum_{j=2}^{n_1} \tilde \beta_{j,1} \tilde \psi^{j,1}+ \sum_{k=2}^{n_2} \tilde \beta_{1,k} \tilde \psi^{1,k} + \sum_{j=2}^{n_1} \sum_{k=2}^{n_2} \tilde \beta_{j,k} \tilde \psi^{j,k},
 \end{equation*}
 where
$$
 \tilde \beta_{j,k}=\begin{dcases} \beta_{1,1} + \sum_{j=2}^{n_1} \beta_{j,1} \psi^{j,1}( \circ, \circ) + \sum_{k=2}^{n_2} \beta_{1,k} \psi^{1,k}(\circ, \circ)+ \sum_{j=2}^{n_1} \sum_{k=2}^{n_2} \beta_{j,k} \psi^{j,k}(\circ, \circ),\  (j,k)=(1,1)\\
\beta_{j,1} + \sum_{k=2}^{n_2} \beta_{j,k} \psi^{1,k}(\circ, \circ),\ (j,k) \in [2:n_1]\times [1],\\
\beta_{1,k} + \sum_{j=2}^{n_1} \beta_{j,k} \psi^{j,1}(\circ, \circ), \ (j,k) \in [1]\times [2: n_2],\\
 \beta_{j,k}, \ (j,k) \in [2:n_1]\times [2:n_2].
\end{dcases}$$
 Note that $\psi^{1,k}(\circ, \circ)= 1-(k-1)/n_2$, $\psi^{j,1}(\circ, \circ)= 1-(j-1)/n_1$ and 
 $$ \sum_{j=2}^{n_1} \beta_{j,1} \psi^{j,1}(\circ, \circ) = -f(1,1) + \frac{1}{n_1} \sum_{j=1}^{n_1} f(j,1).$$
 Analogously, it holds that
 $$ \sum_{k=2}^{n_2} \beta_{j,k} \psi^{1,k}(\circ, \circ)= -f(1,1)+ \frac{1}{n_2} \sum_{k=1}^{n_2} f(1,k).$$
 By plugging in  the expressions for $\beta_{j,k}, \ (j,k) \in [n_1]\times [n_2] $ and by using the above equations the result follows. 
\hfill $\blacksquare$

\subsection{Proof of Lemma \ref{separate-interactions.lemma}}\label{proof.separate-interactions.lemma}
Since $f- \tilde f $ and $\tilde f$ are orthogonal, ${\rm trace} ( \tilde f^T (f- \tilde f) )=0$.
We have
\begin{eqnarray*}
 \| Y - f \|_2^2 - \| Y \|_2^2 &=& -2 {\rm trace} (Y^T  f) + \| f \|_2^2\\
 & =& -2 {\rm trace} (Y^T (f -\tilde f)) -2 {\rm trace} (Y^T \tilde f ) + \| f - \tilde f \|_2^2 + \| \tilde f \|_2^2 \\
&= & \| Y - ( f - \tilde f ) \|_2^2 + \| Y - \tilde f \|_2^2- 2\| Y \|_2^2 . 
\end{eqnarray*}
The result now follows from Lemma \ref{expansion.lemma}.
\hfill $\blacksquare$

\section{Proofs of Section \ref{TV2D-JMLR-Section6}}\label{TV2D-JMLR-Section11}

For the proof of Lemma \ref{2d-appB-l01} we use the dual norm inequality and the polarization identity.

Let $y,z \in \R^n$. The dual norm of $\norm{z}_1$ is $ \sup_{\norm{y}_1\le 1} \abs{y^Tz}= \max_{i \in [n]} \abs{z_i}= \norm{z}_{\infty}$. In particular, the dual norm inequality holds
$$ \abs{y^Tz} \le \norm{y}_1 \norm{z}_{\infty}, \ \forall y, z \in \R^n.$$

For $y,z \in \R^n$ the polarization identity  holds 
$$ 2 y^Tz= \norm{y}^2_2 + \norm{z}^2_2 - \norm{y-z}^2_2.$$

\subsection{Proof of Lemma \ref{2d-appB-l01}}\label{proof.basic.inequ}

The estimator $\hat{\tilde f}$ satisfies the optimality conditions (KKT conditions) 
$$ \frac{{\tilde Y}-\hat{\tilde f}}{n}= \lambda \Delta^{T} \partial \norm{\Delta \hat{\tilde f}}_1,$$
where $\Delta'\partial\norm{\Delta\hat{\tilde f}}_1$ is a subgradient of $\norm{\Delta\hat{\tilde f}}_1$ by the chain rule of the subgradient (Theorem 23.9 in \cite{rock70}). In other words $\Delta'\partial\norm{\Delta\hat{\tilde f}}_1$ is any vector s.t. $\hat{\tilde f}' \Delta' \partial \norm{\Delta\hat{\tilde f}}_1 = \norm{\Delta\hat{\tilde f}}_1$. Since $\Delta$ is fixed, $\partial \norm{\Delta\hat{\tilde f}}_1$ can be any vector in $\R^{(n_1-1)(n_2-1)}$ s.t. 
$$
(\partial \norm{\Delta\hat{\tilde f}}_1)_i\in \begin{cases} \sign((\Delta\hat{\tilde f})_i), & (\Delta \hat{\tilde f})_i \not= 0, \\
[-1,1], &  (\Delta \hat{\tilde f})_i = 0.
\end{cases}
$$
The set of subgradients of $\norm{\Delta\hat{\tilde f}}_1$ is called subdifferential of $\norm{\Delta\hat{\tilde f}}_1$.
By multiplying the KKT conditions by $\hat{\tilde f}$ and an arbitrary $g \in \R^n$, we therefore obtain
\begin{equation}\label{eq.basic.1} \frac{\hat{\tilde f}^{T}({\tilde Y}-\hat{\tilde f})}{n}= \lambda \hat{\tilde f}^{T} \Delta^{T} \partial \norm{\Delta \hat{\tilde f}}_1= \lambda \norm{\Delta \hat{\tilde f}}_1\end{equation}
and
\begin{equation}\label{eq.basic.2} \frac{g^{T}({\tilde Y}-\hat{\tilde f})}{n}= \lambda g^{T} \Delta^{T} \partial \norm{\Delta \hat{\tilde f}}_1\le \lambda \norm{\Delta g}_1.\end{equation}
The last inequality follows by the dual norm inequality and the fact that $\norm{\partial \norm{\Delta \hat{\tilde f}}_1}_{\infty} \le 1$.
Subtracting Equation \eqref{eq.basic.1} from Equation \eqref{eq.basic.2}, together with the identity ${\tilde Y}= {\tilde f}^0+ {\tilde \epsilon}$ yields
$$ \frac{(g-\hat{\tilde f})^{T}({\tilde Y}-\hat{\tilde f})}{n}=\frac{(g-\hat{\tilde f})^{T}({\tilde f}^0-\hat{\tilde f})}{n} - \frac{{\tilde \epsilon}^T(\hat{\tilde f}-g)}{n} \le \lambda (\norm{\Delta g}_1- \norm{\Delta \hat{\tilde f}}_1).$$
The application of the polarization identity gives
$$2 \frac{(g-\hat{\tilde f})^{T}({\tilde f}^0-\hat{\tilde f})}{n}= \norm{\hat{\tilde f}-{\tilde f}^0}^2_2/n + \norm{\hat{\tilde f}-g}^2_2/n - \norm{g-{\tilde f}^0}^2_2/n$$
and Lemma \ref{2d-appB-l01} follows.

\subsection{Proof of Lemma \ref{2d-appB-l02}}\label{bound.ep.proof}

We start by decomposing the empirical process as in Equation \eqref{Eq.decompose}. For the first part, we have that
$$ {{\tilde \epsilon}^{T} {\rm P}_{S} {\tilde f}}/{n} \le \norm{ {\rm P}_{S} {\tilde \epsilon}}_2 \norm{{\tilde f}}_2  / n \le \norm{ {\rm P}_{S} {\tilde \epsilon}}_2 \norm{f}_2  / n$$
and by applying Lemma 1 by \cite{laur00} or Lemma 8.6 by \cite{vand16}   for $x>0$ it holds that with probability at least $1-e^{-x}$
$$ {{\tilde \epsilon}^{T} {\rm P}_{S} {\tilde f}}/{n} \le {\norm{f}_2}\sigma (\sqrt{2x} + \sqrt{s} )/n.  $$

For the second part, note that  by Lemma  \ref{expansion.lemma} we can write
$$ {{\tilde \epsilon}^{T} {\rm A}_{S} {\tilde f} }/{n}= {{\tilde \epsilon}^{T} {\rm A}_{S} {\tilde \Psi} {\tilde \beta} }/{n},$$
where $ {\tilde \beta}= \Delta f$.

 Then by Lemma 17.5 in \cite{vand16}, for $t>0$ we have that  with probability at least $1-e^{-t}$
$$ \frac{{\tilde \epsilon}^{T} {\rm A}_{S} {\tilde f}}{n} \le \lambda \norm{\tilde{v}_{-S}\odot ( \Delta {\tilde f})_{-S}}_1/\tilde{\gamma}= \lambda \norm{{v}_{-S} \odot (\Delta {f})_{-S}}_1, $$
if we choose $\lambda \ge {\tilde \gamma} \lambda_0(t)$.
Thus the claim follows.
\hfill $\blacksquare$

\subsection{Proof of Theorem \ref{2d-appB-thm3}}\label{fast.rate.proof}

For $S \subseteq [(n_1-1)(n_2-1)]$ and $q_S= \text{sign}((\Delta g)_{S})$,  we have that, by the triangle inequality,
\begin{equation*}
\norm{\Delta g}_1 - \norm{\Delta \hat{\tilde f}}_1  \le  q_S^{T}(\Delta(g- \hat{\tilde f}))_{S} -  \norm{(\Delta(g- \hat{\tilde f}))_{-S}}_1+ 2 \norm{(\Delta g)_{-S}}_1.
\end{equation*}
By combining the above inequality with Lemma \ref{2d-appB-l01} and Lemma \ref{2d-appB-l02} applied to $g- \hat{\tilde f}$, using the definition of effective sparsity, and applying the convex conjugate inequality to the term involving $\norm{g- \hat{\tilde f}}_2/\sqrt{n}$ we obtain the claim.
\hfill $\blacksquare$

\subsection{Proof of Theorem \ref{2d-appB-thm4}}\label{slow.rate.proof}
Note that Lemma \ref{2d-appB-l02} implies that, for $x,t>0$ and $\lambda \ge {\tilde \gamma} \lambda_0(t)$, with probability at least $1-e^{-x}-e^{-t}$ it holds that
\begin{equation*}
2\frac{{\tilde \epsilon}^{T} (g-\hat{\tilde f})}{n} \le \frac{\norm{g-\hat{\tilde f}}^2_2}{{n}}+ \left(\sqrt{\frac{2x}{n}} + \sqrt{\frac{s}{n}} \right)^2+ 2 \lambda (\norm{\Delta g}_1 + \norm{\Delta \hat{\tilde f}}_1).
\end{equation*}
Combining the above claim with the basic inequality (Lemma \ref{2d-appB-l01}) proves the theorem.
\hfill $\blacksquare$

\section{Proofs of Section \ref{TV2D-JMLR-Section7}}\label{TV2D-JMLR-Section12}

\subsection{Proof of Lemma \ref{partial-integration.lemma}}\label{part.int.proof}
Partial integration in one dimension gives, for  $N \in \mathbb{N}$ and sequences
$ \{ a_j \}_{j=2}^N : a_2=a_N=0$ and $\{ b_j \}_{j=1}^N $ of real numbers,
$$ \sum_{j=2}^N a_j ( b_j - b_{j-1}) = a_N b_N - a_2 b_1 - \sum_{j=2}^{N-1} (a_{j+1} - a_j) b_j=  - \sum_{j=2}^{N-1} (a_{j+1} - a_j) b_j.$$
Apply this to the inner and the outer sum.
\hfill $\blacksquare$

\subsection{Proof of Lemma \ref{interpolating.lemma}}\label{interpolating.proof}
Let $f\in \R^{n_1 \times n_2} $ be arbitrary and let $q_S$ be a sign configuration.
Then
\begin{eqnarray*}
 && {\rm trace}(q_S^{T} \odot ( D_1 f D_2^T)_S)  - \| (1- {v})_{-S}  \odot ( D_1  f D_2^T)_{-S} \|_1\\
  &\le&  {\rm trace}(q_S^{T} \odot ( D_1 f D_2^T)_S) - \norm{w_{-S}(q_{S})  \odot ( D_1  f D_2^T)_{-S}}_1\\
 & =&  {\rm trace}(w(q_S)^T D_1 f D_2^T) =  {\rm trace}(D_2^Tw(q_S)^T D_1 f)\\
 & \le&  \sqrt{n} \norm{D_1^T w(q_S) D_2}_2 \norm{f}_2/\sqrt{n}.
\end{eqnarray*} 
\hfill $\blacksquare$

\subsection{Proof of Lemma \ref{auxiliary.lemma}}\label{auxiliary.lemma.proof}
By direct calculation
\begin{eqnarray*}
 1- {1 \over 2}(1- \sqrt x) (1- y) - {1 \over 2} ( 1- x) (1-\sqrt y) 
 &=&
{1 \over 2} \sqrt x + {1 \over 2} (1-\sqrt x)y + {1 \over 2} \sqrt y + { 1\over 2} x (1- \sqrt y) \\
 &\ge & (\sqrt x + \sqrt y)/ 2
 \end{eqnarray*} 
 where we used that $(x,y) \in [0,1]^2 $ so that $(1-\sqrt x)y$ and $x (1- \sqrt y)$ are non-negative. 
 \hfill $\blacksquare$
 
\subsection{Proof of Lemma \ref{boundweights.lemma}}\label{boundweights.lemma.proof}
For $j \in \{ t _1, \ldots, t_1+d_1 \}$ and $k \in \{ t_2 , \ldots , t_2+ d_2 \}$
\begin{eqnarray*}
 \sqrt {{j-t_1 \over n_1} +{ k-t_2\over n_2}}\le \sqrt {j-t_1 \over n_1 } + \sqrt {k-t_2\over n_2} &=&
\sqrt {j-t\over d_1} \sqrt {d_1\over n_1} + \sqrt {k-t_2\over d_2 } \sqrt {d_2 \over n_2 } \\
& \le&  \left(\sqrt { {j-t_1 \over d_1}} + \sqrt{{k-t_2 \over d_2} }\right) \sqrt {{ d_1 \over n_1} + {d_2 \over n_2} } .
\end{eqnarray*}
 \hfill $\blacksquare$
 
 \subsection{Proof of Lemma \ref{projections.lemma}}\label{projections.lemma.proof}
 Clearly $ \| {\rm P}_{\cal W}(z-  {\rm P}_{\cal U} z)\|_2 \le \| z- {\rm P}_{\cal U} z \|_2$. We moreover have  for some vector $\gamma$
$${\rm P}_{\cal U}  z = \sum_{j} \gamma_j u_j  \text{ s.t. }{\rm P}_{\cal W}(z-  {\rm P}_{\cal U} z) = \tilde z - \sum_{j} \gamma_j \tilde u_j  . $$ Thus
\begin{equation*}
 \|  \tilde z - {\rm P}_{\tilde U} \tilde z \|_2  =  \min_{c} \| \tilde z - \sum_j c_j \tilde u_j \|_2 \le \| \tilde z - \sum_{j} \gamma_j \tilde u_j \|_2= \| {\rm P}_{\cal W} (z- {\rm P}_{\cal U} z ) \|_2  \le \| z - {\rm P}_{\cal U} z \|_2 .
\end{equation*} 
 \hfill $\blacksquare$
 
 \subsection{Proof of Lemma \ref{antiprojections.lemma}}\label{antiprojections.lemma.proof}
 For $(j,k) \in R_m$,
$$ \| {\rm A}_{\cal U} \psi^{j,k} \|^2_2  \le \| \psi^{j,k} - \psi^{t_m} \|^2_2 \le  
{\abs{j- t_{1,m}}n_2 }  +  {\abs{k- t_{2,m}} n_1} . $$
  \hfill $\blacksquare$
  
%




\vskip 0.2in
\bibliography{library,/Users/fortelli/PhD/newlibrary/library}

\end{document}